\newtheorem{theorem}{Theorem}[section]
\newtheorem{corollary}[theorem]{Corollary}
\newtheorem{proposition}[theorem]{Proposition}
\newtheorem{definition}[theorem]{Definition}
\theoremstyle{definition}
\newtheorem{remark}[theorem]{Remark}
\newtheorem{example}[theorem]{Example}
\newcommand{\wt}[1]{\widetilde{#1}}
\newcommand{\Cinf}{\ensuremath{\mathcal{C}^\infty}}
\newcommand{\Cinfc}{\ensuremath{\mathcal{C}^\infty_{\text{c}}}}
\newcommand{\D}{\ensuremath{{\cal D}}}
\renewcommand{\S}{\mathscr{S}}
\newcommand{\E}{\ensuremath{{\cal E}}}
\newcommand{\LL}{\mathcal{L}}
\newcommand{\mM}{\mathcal{M}}
\newcommand{\mb}[1]{\ensuremath{\mathbb{#1}}}
\newcommand{\N}{\mb{N}}
\newcommand{\R}{\mb{R}}
\newcommand{\C}{\mb{C}}
\newcommand{\G}{\ensuremath{{\cal G}}}
\newcommand{\Gt}{\ensuremath{{\cal G}_\tau}}
\newcommand{\Gc}{\ensuremath{{\cal G}_\mathrm{c}}}
\newcommand{\Gcinf}{\ensuremath{{\cal G}^\infty_\mathrm{c}}}
\newcommand{\GS}{\G_{{\, }\atop{\hskip-4pt\scriptstyle\S}}\!}
\newcommand{\EM}{\ensuremath{{\cal E}_{M}}}
\newcommand{\Neg}{\mathcal{N}}
\newcommand{\Ginf}{\ensuremath{\G^\infty}}
\newcommand{\Gtinf}{\mathcal{G}^{\infty}_\tau}
\newcommand{\GSinf}{\G^\infty_{{\, }\atop{\hskip-3pt\scriptstyle\S}}}
\newcommand{\lara}[1]{\langle #1 \rangle}
\newcommand{\supp}{\mathrm{supp}}
\newcommand{\ssc}{\mathrm{sc}}
\newcommand{\val}{\mathrm{v}} 
\newcommand{\esp}{\mathrm{e}}
\newcommand{\beq}{\begin{equation}}
\newcommand{\eeq}{\end{equation}}
\newcommand{\eps}{\varepsilon}
\newcommand{\Om}{\Omega}
\newcommand{\mP}{\mathcal{P}}
\newcommand{\LLb}{\mathcal{L}_{\rm{b}}}
\newcommand{\dslash}{d\hspace{-0.4em}{ }^-\hspace{-0.2em}}
\begin{document}

\title{{\bf Sufficient conditions of local solvability for partial differential operators in the Colombeau context}}

\author{Claudia Garetto\footnote{Supported by FWF (Austria), grant T305-N13.}\\
Institut f\"ur Grundlagen der Bauingenieurwissenschaften\\
Leopold-Franzens-Universit\"at Innsbruck\\
Technikerstrasse 13, 6020 Innsbruck\\
\texttt{claudia@mat1.uibk.ac.at}\\
}
\date{}
\maketitle

\begin{abstract} 
We provide sufficient conditions of local solvability for partial differential operators with variable Colombeau coefficients. We mainly concentrate on operators which admit a right generalized pseudodifferential parametrix and on operators which are a bounded perturbation of a differential operator with constant Colombeau coefficients. The local solutions are intended in the Colombeau algebra $\G(\Om)$ as well as in the dual $\LL(\Gc(\Om),\wt{\C})$.
\end{abstract}

{\bf{Key words:}} algebras of generalized functions, generalized solutions of partial differential equations

\emph{AMS 2000 subject classification: 46F30, 35D99}

\setcounter{section}{-1}
\section{Introduction}
Colombeau algebras of generalized functions \cite{Colombeau:85, GKOS:01} have proved to be a well-organized and powerful framework where to solve linear and nonlinear partial differential equations involving non-smooth coefficients and strongly singular data. So far, the purpose of many authors has been to find for a specific problem of applicative relevance the most suitable Colombeau framework where first to provide solvability and second to give a qualitative description of the solutions. We recall that several results of existence and uniqueness of the solution have been obtained in this generalized context for hyperbolic Cauchy problems with singular coefficients and initial data \cite{GH:03, HdH:01, HdH:01c, LO:91, O:89}, for elliptic and hypoelliptic equations \cite{GGO:03, HO:03, HOP:05} and for divergent type quasilinear Dirichlet problems with singularities \cite{PilSca:06}. The setting of generalized functions employed is the Colombeau algebra $\G(\Om)$ constructed on an open subset $\Om$ of $\R^n$, or more in general the Colombeau space $\G_E$ of generalized functions based on a locally convex topological vector space $E$ (see \cite{Garetto:05b, Garetto:05a} for definitions and properties). For instance in \cite{BO:92, HO:03} solvability is provided in the Colombeau space based on $H^\infty(\R^n)=\cap_{s\in\R}H^s(\R^n)$. Recently, in order to enlarge the family of generalized hyperbolic problems which can be solved and in order to provide a more refined microlocal investigation of the qualitative properties of the solution, the dual $\LL(\Gc(\Om),\wt{\C})$ has replaced the classical Colombeau setting $\G(\Om)$ \cite{O:07}.
 
This paper is devoted to the general problem of solvability or more precisely local solvability in the Colombeau context for partial differential operators with Colombeau coefficients. Namely, it is the starting point of a challenging project which aims to discuss and fully understand solvability and local solvability of partial differential operators in the Colombeau context. 

Instead of dealing with a specific equation $P(x,D)u=v$ and looking for a new setting of generalized functions tailored to this particular problem, we want to determine a class of locally solvable partial differential operators. This will be done by finding some sufficient conditions on $P$ of local solvability in the Colombeau context $\G(\Om)$ or $\LL(\Gc(\Om),\wt{\C})$.
As in the classical theory of partial differential operators with smooth coefficients, mainly developed by H\"ormander in \cite{Hoermander:63, Hoermander:V1-4}, different mathematical methods and level of technicalities concern the investigation of solvability when the coefficients are constant or not. The Malgrange-Ehrenpreis theorem essentially reduces the solvability issue to the search for a fundamental solution in the constant coefficients case but clearly this powerful tool loses efficiency when the coefficients are variable. In this situation indeed, not only the structural properties of the operator but also the geometric features of the set $\Om$ where we want to solve the equation play a relevant role in stating existence theorems of local or global solvability. 

Differential operators with constant Colombeau coefficients, i.e. coefficients in the ring $\wt{\C}$ of complex generalized numbers, have been studied by various authors \cite{Garetto:08b, Garetto:08c, HO:03}. In particular a notion of fundamental solution has been introduced in \cite{Garetto:08b} as a functional in the dual $\LL(\Gc(\R^n),\wt{\C})$ providing, by means of a generalized version of the Malgrange-Ehrenpreis theorem, a straightforward result of solvability in the Colombeau context. In detail, a solution to the equation $P(D)u=v$, $P(D)=\sum_{|\alpha|\le m}c_\alpha D^\alpha$ with $c_\alpha\in\wt{\C}$ has been obtained via convolution of the right hand side $v$ with a fundamental solution $E$ and certain regularity qualities of the operator $P(D)$, the $\G$- and $\Ginf$-hypoellipticity for instance, have been proven to be equivalent to some structural properties of its fundamental solutions \cite[Theorems 3.6, 4.2]{Garetto:08c}.

In this paper we concentrate on differential operators with variable Colombeau coefficients, i.e. $P(x,D)=\sum_{|\alpha|\le m}c_\alpha(x)D^\alpha$. Being aware of the objective difficulty of investigating solvability in wide generality, we fix our attention on two classes of operators: the operators which are approximately invertible, in the sense that they admit a right generalized pseudodifferential parametrix, and the operators which are locally a bounded perturbation of a differential operator with constant Colombeau coefficients. In both these cases we will formulate sufficient conditions of solvability which will require suitable assumptions on the moderateness properties of the coefficients and the right-hand side. Note that the symbolic calculus for generalized pseudodifferential operators developed in \cite{Garetto:04, Garetto:ISAAC07, GGO:03} is essential for studying the first class of operators whereas the theory of fundamental solutions in the dual $\LL(\Gc(\R^n),\wt{\C})$ is heavily used in finding a local Colombeau solution for operators of bounded perturbation type. This paper can therefore be considered a natural follow-up of \cite{Garetto:08b}.

We now describe the contents of the paper in more detail.

Section \ref{sec_basic} collects the needed background of Colombeau theory and recalls, for the advantage of the reader, the results of solvability obtained in the generalized constant coefficients case. Definition and properties of a fundamental solution in the dual $\LL(\Gc(\R^n),\wt{\C})$ are the topic of Subsection \ref{subsec_fund}. Inspired by the work of H\"ormander in \cite[Section 10.4]{Hoermander:V2}, in Subsection \ref{subsec_comp} we introduce an order relation between operators with constant Colombeau coefficients in terms of the corresponding weight functions. In other words, we make use of the weight function $\wt{P}(\xi)=\big(\sum_{|\alpha|\le m}|\partial^\alpha P(\xi)|^2)^{\frac{1}{2}}$ (with values in $\wt{\R}$) in order to determine the differential operators which are stronger (or weaker, respectively) than $P(D)$. By stating this notion in few equivalent ways (Proposition \ref{prop_basic_2}) we prove that an $m$-oder differential operator $P(D)$ with coefficients in $\wt{\C}$ is stronger than any differential operator with coefficients in $\wt{\C}$ of order $\le m$ if and only if it is $\G$-elliptic. Analogously, we prove that if $P(D)$ is of principal type then it is stronger or better it dominates (Definition \ref{def_domin}) any differential operator with order $\le m-1$. These results of comparison among differential operators with constant Colombeau coefficients will be used in Sections \ref{sec_bounded} and \ref{sec_local}. 

In Section \ref{sec_parametrix} we begin our investigation of local solvability in the Colombeau context by considering differential operators with coefficients in $\G$ that admit a right generalized pseudodifferential operator parametrix. Given $P(x,D)$ this means that there exists a pseudodifferential operator $q(x,D)$ such that the equality $P_\eps(x,D)q_\eps(x,D)=I+r_\eps(x,D)$ holds at the level of representatives with $(r_\eps)_\eps$ a net of regularizing operators. The moderateness properties of the reminder term $r_\eps$ are crucial in determining for each $x_0\in\R^n$ a sufficiently small neighborhood $\Om$ such that the equation $P(x,D)T=F$ on $\Om$ is solvable in $\LL(\Gc(\Om),\wt{\C})$ for any $L^2_{\rm{loc}}$-moderate functional $F$. Different notions of a generalized hypoelliptic symbol have been introduced in the recent past in \cite{Garetto:ISAAC07, GGO:03, GH:05}. They all assure the existence of a generalized parametrix $q(x,D)$ but in general do not guarantee the moderateness properties on the regularizing operator $r(x,D)$ which are essential for the previous result of local solvability. For this reason in Propositions \ref{prop_sol_hyp}, \ref{prop_sol_hyp_2} and \ref{prop_sol_hyp_3} we make use of a definition of generalized hypoelliptic symbol, first presented in \cite{Garetto:04}, which is less general than the ones considered in  \cite{Garetto:ISAAC07, GGO:03, GH:05}, but that combining the right moderateness and regularity properties, provides local solvability in $\LL(\Gc(\Om),\wt{\C})$ as well as in $\G(\Om)$ and $\Ginf(\Om)$.

Section \ref{sec_elliptic} deals with a special class of differential operators: the operators which are $\G$-elliptic in a neighborhood of a point $x_0$. Since they have a generalized hypoelliptic symbol they admit a local generalized parametrix and from the statements of Section \ref{sec_parametrix} we easily obtain results of local solvability. The most interesting fact is that this locally solvable operators are actually a perturbation of a differential operator with constant Colombeau coefficients, namely the same operator evaluated at $x=x_0$. Using the concepts of Subsection \ref{subsec_comp} we prove that a differential operator $P(x,D)$ which is $\G$-elliptic in $x_0$ can be written in the form \beq
\label{BP_intro}
P_0(D)+\sum_{j=1}^r c_j(x)P_j(D),
\eeq
where $P_0(D)=P(x_0,D)$, the operators $P_j(D)$ have coefficients in $\wt{\C}$ and are all weaker than $P_0(D)$ and the generalized functions $c_j$ belong to the Colombeau algebra $\G(\R^n)$. This fact motivates our interest for the wider class of generalized differential operators which are locally a bounded perturbation of a differential operator with constant Colombeau coefficients as in \eqref{BP_intro}. The precise definition and some first examples are the topic of Section \ref{sec_bounded}.

In Section \ref{sec_local} we provide some sufficient conditions of local solvability for operators of bounded perturbation type as defined in Section \ref{sec_bounded}. The local solutions are obtained by using a fundamental solution in $\LL(\Gc(\R^n),\wt{\C})$ of $P_0(D)$, the comparison between the operators $P_j(D)$ and $P_0(D)$ and, at the level of representatives, suitable estimates of $B_{p,k}$-moderateness. Theorem \ref{theo_locsolv_easy} and Theorem \ref{theo_locsolv} have H\"ormander's theorem of local solvability for operators of constant strength (\cite[Theorem 7.3.1]{Hoermander:63}, \cite[Theorem 13.3.3]{Hoermander:V2}) as a blueprint.

The paper ends with a sufficient condition of local solvability for operators which are not necessarily of bounded perturbation type or do not have a generalized parametrix. In Section \ref{sec_pseudo}, inspired by \cite[Chapter 4]{SaintRaymond:91}, we prove that a certain Sobolev estimate from below on the adjoint of a generalized pseudodifferential operator is sufficient to obtain local solvability in the dual $\LL(\Gc(\Om),\wt{\C})$. The proof has the interesting feature of using the theory of generalized Hilbert $\wt{\C}$-modules developed in \cite{GarVer:08} and in particular the projection theorem on an internal subset. We finally give some examples of generalized differential and pseudodifferential operators fulfilling this sufficient condition. 

\section{Colombeau theory and partial differential operators with constant Co\-lom\-be\-au coefficients}
\label{sec_basic}
In this section we recall some basic notions of Colombeau theory and, for the advantage of the reader, what has been proved in \cite{Garetto:08b, HO:03} about solvability in the Colombeau context of partial differential operators with generalized constant coefficients. 
\subsection{Basic notions of Colombeau theory}
Main sources of this subsection are \cite{Colombeau:85, Garetto:05b, Garetto:05a, GGO:03, GH:05, GKOS:01}.
\paragraph{Nets of numbers.}
Before dealing with the major points of the Colombeau construction we begin by recalling some definitions concerning elements of $\mathbb{C}^{(0,1]}$.

A net $(u_\eps)_\eps$ in $\C^{(0,1]}$ is said to be \emph{strictly nonzero} if there exist $r>0$ and $\eta\in(0,1]$ such that $|u_\eps|\ge \eps^r$ for all $\eps\in(0,\eta]$. The regularity issues discussed in this paper will make use of the following concept of \emph{slow scale net (s.s.n)}. A slow scale net is a net $(r_\eps)_\eps\in\C^{(0,1]}$ such that 
\[
\forall q\ge 0\, \exists c_q>0\, \forall\eps\in(0,1]\qquad\qquad|r_\eps|^q\le c_q\eps^{-1}.
\]
\paragraph{Colombeau spaces based on $E$.}
Let $E$ be a locally convex topological vector space topologized through the family of seminorms $\{p_i\}_{i\in I}$. The elements of 
\[
\begin{split} 
\mM_E &:= \{(u_\eps)_\eps\in E^{(0,1]}:\, \forall i\in I\,\, \exists N\in\N\quad p_i(u_\eps)=O(\eps^{-N})\, \text{as}\, \eps\to 0\},\\
\mM^\ssc_E &:=\{(u_\eps)_\eps\in E^{(0,1]}:\, \forall i\in I\,\, \exists (\omega_\eps)_\eps\, \text{s.s.n.}\quad p_i(u_\eps)=O(\omega_\eps)\, \text{as}\, \eps\to 0\},\\
%\mM^\infty_E &:=\{(u_\eps)_\eps\in E^{(0,1]}:\, \exists N\in\N\,\, \forall i\in I\quad p_i(u_\eps)=O(\eps^{-N})\, \text{as}\, \eps\to 0\},\\
\Neg_E &:= \{(u_\eps)_\eps\in E^{(0,1]}:\, \forall i\in I\,\, \forall q\in\N\quad p_i(u_\eps)=O(\eps^{q})\, \text{as}\, \eps\to 0\},
\end{split}
\]
are called $E$-moderate, $E$-moderate of slow scale type and $E$-negligible, respectively. We define the space of \emph{generalized functions based on $E$} as the factor space $\G_E := \mM_E / \Neg_E$. The expression \emph{``of slow scale type''} is used for the generalized functions of the factor space $\G^\ssc_E:=\mM^\ssc_E/\Neg_E$. The elements of $\G_E$ are equivalence classes for which we use the notation $u=[(u_\eps)_\eps]$.  

Let $\Om$ be an open subset of $\R^n$. For coherence with the notations already in use, we set $\EM(\Om)=\mM_{\E(\Om)}$, $\Neg(\Om)=\Neg_{\E(\Om)}$, $\EM=\mM_{\C}$ and $\Neg=\Neg_{\C}$. The Colombeau algebra $\G(\Om)$, as originally defined in its full version by Colombeau in \cite{Colombeau:85}, is obtained as the space $\G_E$ with $E=\E(\Om)$. Analogously, the rings $\wt{\C}$ and $\wt{\R}$ of complex and real generalized numbers are the Colombeau spaces $\G_\C$ and $\G_\R$ respectively. $\wt{\C}$ is also the set of constants of $\G(\R^n)$. The space of distributions $\D'(\Om)$ is embedded into $\G(\Om)$ via convolution with a mollifier (see \cite{GKOS:01} for more details). Since $\G(\Om)$ is a sheaf with respect to $\Om$ one has a notion of support for $u\in\G(\Om)$ and a subalgebra $\Gc(\Om)$ of compactly supported generalized functions.
\paragraph{Regularity theory.} Regularity theory in the Colombeau context as initiated in \cite{O:92} is based on the subalgebra $\Ginf(\Om)$ of all elements $u$ of $\G(\Om)$ having a representative $(u_\eps)_\eps$ belonging to the set
\[
\EM^\infty(\Om):=\{(u_\eps)_\eps\in\E[\Om]:\ \forall K\Subset\Om\, \exists N\in\N\, \forall\alpha\in\N^n\quad\sup_{x\in K}|\partial^\alpha u_\eps(x)|=O(\eps^{-N})\, \text{as $\eps\to 0$}\}.
\]
$\Ginf(\Om)$ coincides with the factor space $\EM^\infty(\Om)/\Neg(\Om)$ and by construction has the intersection property $\Ginf(\Om)\cap \D'(\Om)=\Cinf(\Om)$.
\paragraph{Topological theory of Colombeau spaces.}
The family of seminorms $\{p_i\}_{i\in I}$ on $E$ determines a \emph{locally convex $\wt{\C}$-linear} topology on $\G_E$ (see \cite[Definition 1.6]{Garetto:05b}) by means of the \emph{valuations}
\[
\val_{p_i}([(u_\eps)_\eps]):=\val_{p_i}((u_\eps)_\eps):=\sup\{b\in\R:\qquad p_i(u_\eps)=O(\eps^b)\, \text{as $\eps\to 0$}\}
\] 
and the corresponding \emph{ultra-pseudo-seminorms} $\{\mP_i\}_{i\in I}$, where $\mP_i(u)=\esp^{-\val_{p_i}(u)}$. For the sake of brevity we omit to report definitions and properties of valuations and ultra-pseudo-seminorms in the abstract context of $\wt{\C}$-modules. Such a theoretical presentation can be found in \cite[Subsections 1.1, 1.2]{Garetto:05b}. More in general a theory of topological and locally convex topological $\wt{\C}$-modules has been developed in \cite{Garetto:05a}. The Colombeau algebra $\G(\Om)$ has the structure of a Fr\'echet $\wt{\C}$-modules and $\Gc(\Om)$ is the inductive limit of a family of Fr\'echet $\wt{\C}$-modules. We recall that on $\wt{\C}$ the valuation and the ultra-pseudo-norm obtained through the absolute value in $\C$ are denoted by $\val_{\wt{\C}}$ and $|\cdot|_{\esp}$ respectively.
\paragraph{The dual $\LL(\Gc(\Om),\wt{\C})$ and its basic functionals.}
$\LL(\Gc(\Om),\wt{\C})$ is the set of all continuous $\wt{\C}$-linear functionals on $\Gc(\Om)$. As proven in \cite{Garetto:05b} it contains (via continuous embedding) both the algebras $\Ginf(\Om)$ and $\G(\Om)$, i.e., $\Ginf(\Om)\subseteq\G(\Om)\subseteq\LL(\Gc(\Om),\wt{\C})$. The inclusion $\G(\Om)\subseteq\LL(\Gc(\Om),\wt{\C})$ is given via integration ($u\to\big( v\to\int_\Om u(x)v(x)dx\big)$, for definitions and properties of the integral of a Colombeau generalized functions see \cite{GKOS:01}). A special subset of $\LL(\Gc(\Om),\wt{\C})$ is obtained by requiring the so-called ``basic'' structure. In detail, we say that $T\in\LL(\Gc(\Om),\wt{\C})$ is basic (or equivalently $T\in\LLb(\Gc(\Om),\wt{\C})$) if there exists a net $(T_\eps)_\eps\in\D'(\Om)^{(0,1]}$ fulfilling the following condition: for all $K\Subset\Om$ there exist $j\in\N$, $c>0$, $N\in\N$ and $\eta\in(0,1]$ such that
\[
\forall f\in\D_K(\Om)\, \forall\eps\in(0,\eta]\qquad\quad
|T_\eps(f)|\le c\eps^{-N}\sup_{x\in K,|\alpha|\le j}|\partial^\alpha f(x)|
\]
and $Tu=[(T_\eps u_\eps)_\eps]$ for all $u\in\Gc(\Om)$.

Analogously one can introduce the dual $\LL(\G(\Om),\wt{\C})$ and the corresponding set $\LLb(\G(\Om),\wt{\C})$ of basic functionals. As in distribution theory, Theorem 1.2 in \cite{Garetto:05b} proves that $\LL(\G(\Om),\wt{\C})$ can be identified with the set of functionals in $\LL(\Gc(\Om),\wt{\C})$ having compact support.
\paragraph{Generalized differential operators.} $\G(\Om)$ is a differential algebra, in the sense that derivatives of any order can be defined extending the corresponding distributional ones. We can therefore talk of differential operators in the Colombeau context or, for simplicity, of \emph{generalized differential operators}. Clearly, a differential operator with singular distributional coefficients generates a differential operator in the Colombeau context by embedding its coefficients in the Colombeau algebra. Let  
\[
P(x,D)=\sum_{|\alpha|\le m}c_\alpha(x)D^\alpha,
\] 
with $c_\alpha\in\G(\Om)$ for all $\alpha$. Its symbol
\[
P(x,\xi)=\sum_{|\alpha|\le m}c_\alpha(x)D^\alpha
\]
is a polynomial of order $m$ with coefficients in $\G(\Om)$ and representatives
\[
P_\eps(x,\xi)=\sum_{|\alpha|\le m}c_{\alpha,\eps}(x)D^\alpha.
\]
The operator $P(x,D)$ maps $\Gc(\Om)$, $\G(\Om)$ and $\LL(\Gc(\Om),\wt{\C})$ into themselves respectively and $\Ginf(\Om)$ into $\Ginf(\Om)$ if the coefficients are $\Ginf$-regular. When the coefficients are constant ($c_\alpha\in\wt{\C}$ for all $\alpha$) we use the notation $P(D)$.
 
\subsection{Fundamental solutions in $\LLb(\Gc(\R^n),\wt{\C})$}
\label{subsec_fund}
Let $P(D)$ be a partial differential operator of order $m$ with coefficients in $\wt{\C}$. Any net of polynomials $(P_\eps)_\eps$ determined by a choice of representatives of the coefficients of $P(D)$ is called a representative of the polynomial $P$. Consider the weight function $\wt{P}:\R^n\to\wt{\R}$ defined by
\[
\wt{P}^2(\xi)=\sum_{|\alpha|\le m}|\partial^\alpha P(\xi)|^2.
\]
The arguments in \cite[(2.1.10)]{Hoermander:63} yield the following assertion: there exists $C>0$ depending only on $m$ and $n$ such that for all $(P_\eps)_\eps$ the inequality
\beq
\label{est_Hoer}
\wt{P_\eps}(\xi+\eta)\le (1+C|\xi|)^m\wt{P_\eps}(\eta)
\eeq
is valid for all $\xi,\eta\in\R^n$ and all $\eps\in(0,1]$. When the function $\wt{P}:\R^n\to\wt{\R}$ is invertible in some point $\xi_0$ of $\R^n$ Lemma 7.5 in \cite{HO:03} proves that for all representative $(P_\eps)_\eps$ of $P$ there exist $N\in\N$ and $\eta\in(0,1]$ such that
\beq
\label{est_inv}
\wt{P_\eps}(\xi)\ge \eps^N(1+C|\xi_0-\xi|)^{-m},
\eeq
for all $\xi\in\R^n$ and $\eps\in(0,\eta]$. This means that $\wt{P}$ is invertible in any $\xi$ once it is invertible in some $\xi_0$. Note that the constant $C>0$ is the same appearing in \eqref{est_Hoer} and $\eps^N$ comes from the invertibility in $\wt{\R}$ of $\wt{P}(\xi_0)$. It is not restrictive to assume for some strictly non-zero net $(\lambda_\eps)_\eps$ that
\[
\wt{P_\eps}(\xi)\ge \lambda_\eps(1+C|\xi_0-\xi|)^{-m},
\]
for all $\eps\in(0,1]$.

In the sequel $\mathcal{K}$ is the set of tempered weight functions introduced by H\"ormander in
\cite[Definition 2.1.1]{Hoermander:63}, i.e., the set of all positive functions $k$ on $\R^n$ such that for some
constants $C>0$ and $N\in\N$ the inequality
\[
\label{ineq_weight}
k(\xi+\eta)\le (1+C|\xi|)^N k(\eta)
\]
holds for all $\xi,\eta\in\R^n$. Concerning the H\"ormander spaces $B_{p,k}$ which follow, main references are \cite{Hoermander:63, Hoermander:V2}. Typical example of a weight function is $k(\xi)=\lara{\xi}^s=(1+|\xi|^2)^{\frac{s}{2}}$, $s\in\R$.
%From \eqref{ineq_weight} it follows that $k(\xi+\eta)\ge (1+C|\xi|)^{-N}
%k(\eta)$ and then $k(\xi)\ge (1+C|\xi|)^{-N}k(0)$ for all $\xi\in\R^n$.
\begin{definition}
\label{def_cla_bpk}
If $k\in\mathcal{K}$ and $p\in[1,+\infty]$ we denote by $B_{p,k}(\R^n)$ the set of all distributions $w\in\S'(\R^n)$ such that $\widehat{w}$ is a function and
\[
\Vert w\Vert_{p,k}=(2\pi)^{-n}\Vert k\widehat{w}\Vert_p <	\infty.
\]
\end{definition}
$B_{p,k}(\R^n)$ is a Banach space with the norm introduced in Definition \ref{def_cla_bpk}. We have $\S(\R^n)\subset B_{p,k}(\R^n)\subset\S'(\R^n)$ (in a topological sense) and that $\Cinfc(\R^n)$ is dense in $B_{p,k}(\R^n)$ for $p<\infty$.

The inequality \eqref{est_Hoer} says that $\wt{P_\eps}$ is a tempered weight function for each $\eps$ so it is meaningful to consider the sets $B_{\infty,\wt{P_\eps}}(\R^n)$ of distributions as we will see in the next theorem, proven in \cite{Garetto:08b}.
\begin{theorem}
\label{theo_fund_P}
To every differential operator $P(D)$ with coefficients in $\wt{\C}$ such that $\wt{P}(\xi)$ is invertible in some $\xi_0\in\R^n$ there exists a fundamental solution $E\in\LLb(\Gc(\R^n),\wt{\C})$. More precisely, to every $c>0$ and $(P_\eps)_\eps$ representative of $P$ there exists a fundamental solution $E$ given by a net of distributions $(E_\eps)_\eps$ such that $E_\eps/\cosh(c|x|)\in B_{\infty,\wt{P_\eps}}(\R^n)$ and for all $\eps$
\[
\biggl\Vert \frac{E_\eps}{\cosh(c|x|)}\biggr\Vert_{\infty,\wt{P_\eps}}\le C_0,
\]
where the constant $C_0$ depends only on $n,m$ and $c$.
\end{theorem}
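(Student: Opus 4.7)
The plan is to lift H\"ormander's classical construction of a tempered fundamental solution to the Colombeau context by working at the level of representatives. First, I would apply the classical H\"ormander theorem (the non-generalized analogue of the statement being proved, essentially Theorem 7.3.10 in the 1963 book) pointwise in $\eps$: for every $\eps\in(0,1]$, the polynomial $P_\eps$ has $\wt{P_\eps}$ as a tempered weight function by \eqref{est_Hoer}, and the classical construction yields a distribution $E_\eps$ with $P_\eps(D)E_\eps=\delta$ and
\[
\bigl\Vert E_\eps/\cosh(c|x|)\bigr\Vert_{\infty,\wt{P_\eps}}\le C_0,
\]
where $C_0$ depends only on $n,m,c$. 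The key point at this stage is that the classical proof gives a constant that depends on the polynomial only through its degree and the dimension, so $C_0$ is genuinely $\eps$-independent.

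Next, I would show that the net $(E_\eps)_\eps$ determines a basic functional in $\LLb(\Gc(\R^n),\wt{\C})$. Given $K\Subset\R^n$ and $f\in\D_K(\R^n)$, I would pair via the $B_{\infty,\wt{P_\eps}}$--$B_{1,1/\wt{P_\eps}}$ duality,
\[
|E_\eps(f)|\le \bigl\Vert E_\eps/\cosh(c|x|)\bigr\Vert_{\infty,\wt{P_\eps}}\,\bigl\Vert \cosh(c|x|)f\bigr\Vert_{1,1/\wt{P_\eps}}\le C_0\,(2\pi)^{-n}\int_{\R^n}\frac{|\widehat{\cosh(c|\cdot|)f}(\xi)|}{\wt{P_\eps}(\xi)}\,d\xi.
\]
I would then use the lower bound \eqref{est_inv}, in the rephrased form $\wt{P_\eps}(\xi)\ge\lambda_\eps(1+C|\xi_0-\xi|)^{-m}$ with $(\lambda_\eps)_\eps$ strictly nonzero, to control the integrand by $\lambda_\eps^{-1}(1+C|\xi_0-\xi|)^{m}|\widehat{\cosh(c|\cdot|)f}(\xi)|$. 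Integrating by parts enough times to absorb the polynomial weight at infinity gives a bound of the form $c\,\eps^{-N}\sup_{x\in K,|\alpha|\le j}|\partial^\alpha f(x)|$ for some $j,N$ depending on $K$, $m$, and $n$, which is exactly the basicness estimate required.

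To verify that $E$ is a fundamental solution in the dual, I would observe that since each representative satisfies $P_\eps(D)E_\eps=\delta$ in $\D'(\R^n)$, the transposed action $P(D)E$ on any $u\in\Gc(\R^n)$ with representative $(u_\eps)_\eps$ is $[(E_\eps(P_\eps(-D)u_\eps))_\eps]=[(u_\eps(0))_\eps]=\delta(u)$ in $\wt{\C}$. Independence of the chosen representatives of $u$ is immediate, and different choices of representatives for $P$ only affect $E$ modulo a negligible correction since the estimate above is uniform.

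The main obstacle is the first step: one must confirm that the constant in H\"ormander's fundamental solution theorem depends only on $(m,n,c)$ and not on the coefficients of $P$ themselves. This is a matter of tracking constants through the classical contour-shift argument (Malgrange's trick applied with the weight $\cosh(c|x|)$), and is the crucial ingredient that makes the net $(E_\eps)_\eps$ moderate. Once this uniformity is in hand, combining the uniform $B_{\infty,\wt{P_\eps}}$ bound with the invertibility lower bound on $\wt{P_\eps}$, which is precisely what injects the $\eps^N$ into the moderateness estimate, makes the rest of the argument essentially formal.
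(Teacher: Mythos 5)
The paper does not actually prove Theorem \ref{theo_fund_P} here; it is quoted from \cite{Garetto:08b}, and your argument is essentially the one carried out there: apply H\"ormander's classical construction (with the $\cosh(c|x|)$ weight) to each $P_\eps$ separately, exploiting that the classical constant $C_0$ depends only on $m$, $n$ and $c$ and not on the coefficients, and then derive basicness from the uniform $B_{\infty,\wt{P_\eps}}$ bound together with the lower bound \eqref{est_inv} coming from the invertibility of $\wt{P}(\xi_0)$. Your identified ``main obstacle'' is in fact already part of the classical statement (the uniformity of $C_0$ in the polynomial is explicit in H\"ormander's theorem), and your closing remark that different representatives of $P$ change $E$ only by a negligible correction is neither needed for the statement as formulated nor obviously true, but this does not affect the correctness of the proof.
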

One sees in the proof of Theorem \ref{theo_fund_P} (Proposition 3.5 and Theorem 3.3 in \cite{Garetto:08c}) that for each $\eps$ the distribution $E_\eps$ is a fundamental solution of the operator $P_\eps(D)$. Theorem \ref{theo_fund_P} entails the following solvability result.
\begin{theorem}
\label{theom_solv_dual_1}
Let $P(D)$ be a partial differential operator with coefficients in $\wt{\C}$ such that $\wt{P}$ is invertible in some $\xi_0\in\R^n$. Then the equation
\beq
\label{eq_1_P}
P(D)u=v
\eeq
\begin{itemize}
\item[(i)] has a solution $u\in\G(\R^n)$ if $v\in\Gc(\R^n)$,
\item[(ii)] has a solution $u\in\Ginf(\R^n)$ if $v\in\Gcinf(\R^n)$,
\item[(iii)] has a solution $u\in\LL(\Gc(\R^n),\wt{\C})$ if $v\in\LL(\G(\R^n),\wt{\C})$,
\item[(iv)] has a solution $u\in\LLb(\Gc(\R^n),\wt{\C})$ if $v\in\LLb(\G(\R^n),\wt{\C})$.
\end{itemize}
\end{theorem}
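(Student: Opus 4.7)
The natural approach is to build the solution $u$ as a convolution of $v$ with the fundamental solution $E$ furnished by Theorem \ref{theo_fund_P}. Fix a representative $(P_\eps)_\eps$ of $P$ and choose representatives $(E_\eps)_\eps$ of $E$ with $E_\eps / \cosh(c|x|) \in B_{\infty,\wt{P_\eps}}(\R^n)$ uniformly in $\eps$ (for some fixed $c>0$ to be tuned below). Since each $E_\eps$ is a distributional fundamental solution of $P_\eps(D)$, the identity $P_\eps(D)(E_\eps * v_\eps) = v_\eps$ holds for every $\eps$ whenever the convolution is defined. Thus it suffices to define $u_\eps = E_\eps * v_\eps$ at the level of representatives and check that the resulting net lives in the moderateness/regularity class dictated by the target space; negligibility of another choice of representative is transferred through the convolution in the same way.

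For items (i) and (ii), $v \in \Gc(\R^n)$, so there is $K \Subset \R^n$ such that $\supp v_\eps \subset K$ (up to $\Neg_\E$). Then $u_\eps = E_\eps * v_\eps$ is smooth. To estimate $\partial^\alpha u_\eps$ on a compact set $L$, I would write $\partial^\alpha u_\eps = E_\eps * \partial^\alpha v_\eps$, pick $c$ so that $\cosh(c|x-y|) \le C(L,K)\cosh(c|x|)$ for $x\in L$, $y\in K$, and apply the $B_{\infty,\wt{P_\eps}}$ estimate from Theorem \ref{theo_fund_P} together with a standard duality argument ($|\langle E_\eps/\cosh(c\,\cdot), \phi\rangle| \le \Vert E_\eps/\cosh(c\,\cdot)\Vert_{\infty,\wt{P_\eps}}\Vert \phi\Vert_{1,1/\wt{P_\eps}}$) to obtain polynomial bounds in $\eps^{-1}$ on $\partial^\alpha u_\eps$. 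The fact that $\wt{P_\eps}^{-1}$ is $L^1$-integrable against the Fourier transform of a compactly supported smooth function enters through inequality \eqref{est_inv} and the decay of $\widehat{v_\eps}$. This yields $\EM(\R^n)$-moderateness and hence $u\in\G(\R^n)$. Case (ii) is the same argument noting that, since derivatives are pushed onto $v_\eps \in \EMinf$, the order $N$ in the moderateness bound can be chosen independently of $\alpha$, producing $u \in \Ginf(\R^n)$.

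For (iii) and (iv) the functional $v$ has compact support (recall $\LL(\G(\R^n),\wt{\C})$ is identified with the compactly supported elements of $\LL(\Gc(\R^n),\wt{\C})$), in case (iv) represented moreover by a basic net $(v_\eps)_\eps \in \D'(\R^n)$ with $\supp v_\eps$ contained in a common compact $K$. I would then define $u_\eps = E_\eps * v_\eps$ distributionally and set $T u := [(\langle u_\eps, u'_\eps\rangle)_\eps]$ for $u \in \Gc(\R^n)$ with representative $(u'_\eps)_\eps$; the membership of the pairing in $\EM$ (hence the well-definedness in $\wt{\C}$) follows by rewriting $\langle E_\eps * v_\eps, u'_\eps\rangle = \langle v_\eps, \check{E_\eps} * u'_\eps\rangle$ (with $\check{E_\eps}(x)=E_\eps(-x)$), and by applying to $\check{E_\eps} * u'_\eps$ the same smooth-convolution estimates as in (i). Continuity with respect to the inductive-limit topology on $\Gc(\R^n)$, and in case (iv) the basic bound on seminorms of test functions, come from tracking the uniform compact support of $v_\eps$ and the basic estimate for $v$.

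The main obstacle I expect is the careful handling of the weight $\cosh(c|x|)$ in the $B_{\infty,\wt{P_\eps}}$-bound: one must choose $c$ depending on the localization but not on $\eps$, and transfer the Banach-space estimate on $E_\eps/\cosh(c|x|)$ into genuine $\EM$-type bounds (polynomial in $\eps^{-1}$) on $E_\eps * v_\eps$. The other delicate point is ensuring that the same $N$ works uniformly in the differentiation order for case (ii), which is achieved by moving all derivatives onto $v_\eps$; analogously, for case (iv), the uniformity of the basic constants $j, N, c$ across $f\in\D_L(\R^n)$ must be preserved along the convolution, which I would do by explicitly exhibiting the defining net of $Tu$ as $(E_\eps * v_\eps)_\eps$ and invoking the basic estimate for $v$ on the smoothed test function $\check{E_\eps}*f$.
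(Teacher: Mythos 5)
Your proposal follows exactly the route the paper takes: the paper offers no independent argument for Theorem \ref{theom_solv_dual_1} beyond the remark that it is entailed by Theorem \ref{theo_fund_P} (with details deferred to \cite{Garetto:08b} and \cite{HO:03}), and convolving $v$ with the fundamental solution $E$, using the uniform $B_{\infty,\wt{P_\eps}}$-bound on $E_\eps/\cosh(c|x|)$ together with the lower bound \eqref{est_inv} to convert it into $\EM$-estimates, is precisely that argument. The one point to tighten is case (iii): a general $v\in\LL(\G(\R^n),\wt{\C})$ need not be basic, so the net $u_\eps=E_\eps\ast v_\eps$ is not available there and the solution must instead be defined directly by transposition, $T(u):=v\bigl([(\check{E_\eps}\ast u'_\eps)_\eps]\bigr)$, which is exactly the pairing you already wrote down and which only requires $E$ (not $v$) to be basic.
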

Theorem \ref{theom_solv_dual_1} extends to the dual the solvability result obtained in $\G$ by H\"ormann and Oberguggenberger in \cite{HO:03}. A more detailed investigation of the properties of $u$, which heavily makes use of the theory of $B_{p,k}$-spaces, can be found in \cite[Appendix]{Garetto:08b}. 
\begin{remark}
\label{rem_HO_inv}
The condition of invertibility of $\wt{P}$ in a point $\xi_0$ of $\R^n$ turns out to be equivalent to the solvability statement $(i)$ of Theorem \ref{theom_solv_dual_1}. More precisely, Theorem 7.8 in \cite{HO:03} shows that if $v$ is invertible in some point of $\Om$ and the equation $P(D)u=v$ is solvable in $\G(\Om)$ then $\wt{P}$ is invertible in some point of $\R^n$. In the same paper the authors prove that the invertibility of the principal symbol $P_m$ in some $\xi_0$ implies the invertibility of $\wt{P}(\xi_0)$. The converse does not hold as one can see from $P_\eps(\xi)=a_\eps\xi+i$, with $a=[(a_\eps)_\eps]\neq 0$ real valued and not invertible. The principal symbol $P_1$ is not invertible (in any point of $\R^n$) but $\wt{P}^2(0)=1+a^2$ is invertible in $\wt{\R}$. In the same way we have that the existence of an invertible coefficient in the principal part of $P(D)$ is a sufficient but not necessary condition for the invertibility of the weight function $\wt{P}$. Note that there exist differential operators where all the coefficients are not invertible which still have an invertible weight function. An example is given by $P_\eps(\xi_1,\xi_2)=a_\eps\xi_1+ib_\eps\xi_2$, where $a_\eps=1$ if $\eps=n^{-1}$, $n\in\N$, and $0$ otherwise, and $b_\eps=0$ if $\eps=n^{-1}$, $n\in\N$, and $1$ otherwise. The coefficients generated by $(a_\eps)_\eps$ and $(b_\eps)_\eps$ are clearly not invertible but $\wt{P_\eps}^2(1,1)=2(a_\eps^2+b_\eps^2)=2$.
\end{remark}

\subsection{Comparison of differential operators with constant Colombeau coefficients}
\label{subsec_comp}
Inspired by \cite[Section 10.4]{Hoermander:V2} we introduce an order relation between operators with constant Colombeau coefficients by comparing the corresponding weight functions. 
\begin{definition}
\label{def_ord_rel}
Let $P(D)$ and $Q(D)$ be partial differential operators with coefficients in $\wt{\C}$. We say that $P(D)$ is stronger than $Q(D)$ ($Q(D)\prec P(D)$) if there exist representatives $(P_\eps)_\eps$ and $(Q_\eps)_\eps$ and a moderate net $(\lambda_\eps)_\eps$ such that
\[
\wt{Q_\eps}(\xi)\le \lambda_\eps \wt{P_\eps}(\xi)
\]
for all $\xi\in\R^n$ and $\eps\in(0,1]$
\end{definition}
In the sequel we collect some estimates valid for polynomials with coefficients in $\C$ proven in \cite[Theorem 10.4.1, Lemma 10.4.2]{Hoermander:V2}. We recall that $\wt{Q}(\xi,t)$ denotes the function $(\sum_\alpha |Q^{(\alpha)}(\xi)|^2t^{2|\alpha|})^{\frac{1}{2}}$ for $Q$ polynomial of degree $\le m$ in $\R^n$ and $t$ positive real number. Clearly $\wt{Q}(\xi,1)=\wt{Q}(\xi)$.
\begin{proposition}
\label{prop_dom}
\leavevmode
\begin{itemize}
\item[(i)] There exists a constant $C>0$ such that for every polynomial $Q$ of degree $\le m$ in $\R^n$,
\beq
\label{Hoer_1}
\frac{\wt{Q}(\xi,t)}{C}\le \sup_{|\eta|<t}|Q(\xi+\eta)|\le C\wt{Q}(\xi,t)
\eeq
for all $\xi\in\R^n$ and $t>0$.
\item[(ii)] There exist constants $C'$ and $C''$ such that for all polynomials $P$ and $Q$ of degree $\le m$,
\beq
\label{Hoer_2}
C'\wt{P}(\xi)\wt{Q}(\xi)\le \wt{PQ}(\xi)\le C''\wt{P}(\xi)\wt{Q}(\xi)
\eeq
for all $\xi\in\R^n$.
\end{itemize}
\end{proposition}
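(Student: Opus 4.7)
The plan is to reduce both assertions to finite-dimensional norm-equivalence arguments on spaces of polynomials of bounded degree, with (i) being the driver and (ii) following from it plus a compactness step.

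For part (i), I first exploit the affine invariance. Setting $R(\eta):=Q(\xi+t\eta)$, a polynomial of degree $\le m$, one has $R^{(\alpha)}(0)=t^{|\alpha|}Q^{(\alpha)}(\xi)$, so $\wt{R}(0,1)=\wt{Q}(\xi,t)$, and $\sup_{|\eta|<1}|R(\eta)|=\sup_{|\eta|<t}|Q(\xi+\eta)|$. It thus suffices to prove the inequality when $\xi=0$, $t=1$. The upper bound is then immediate from the Taylor expansion $R(\eta)=\sum_{|\alpha|\le m}R^{(\alpha)}(0)\eta^\alpha/\alpha!$ combined with the Cauchy--Schwarz inequality, since $|\eta^\alpha|\le 1$ for $|\eta|\le 1$. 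For the lower bound I observe that both $R\mapsto \sup_{|\eta|<1}|R(\eta)|$ and $R\mapsto \wt{R}(0,1)$ are norms on the finite-dimensional space of polynomials of degree $\le m$ in $n$ variables (the supremum vanishes only for $R\equiv 0$, since a polynomial vanishing on an open set must be identically zero), so they are equivalent with constants depending only on $m$ and $n$.

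For the upper bound in (ii), I apply (i) to the polynomial $PQ$ of degree $\le 2m$, and then the upper half of (i) separately to $P$ and $Q$:
\[
\wt{PQ}(\xi)\le C_{2m}\sup_{|\eta|<1}|P(\xi+\eta)Q(\xi+\eta)|\le C_{2m}\sup_{|\eta|<1}|P(\xi+\eta)|\,\sup_{|\eta|<1}|Q(\xi+\eta)|\le C''\wt{P}(\xi)\wt{Q}(\xi).
\]

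The lower bound in (ii) is the main obstacle, since the Leibniz identity $(PQ)^{(\alpha)}(\xi)=\sum_{\beta\le\alpha}\binom{\alpha}{\beta}P^{(\beta)}(\xi)Q^{(\alpha-\beta)}(\xi)$ cannot be inverted pointwise to recover $\wt{P}(\xi)\wt{Q}(\xi)$ from the $(PQ)^{(\alpha)}(\xi)$. The remedy is a compactness argument. Translating $\eta\mapsto\xi+\eta$ in both polynomials reduces matters to $\xi=0$, so it suffices to find $C'>0$ with $\wt{P}(0)\wt{Q}(0)\le C'\wt{PQ}(0)$ for all $P,Q$ of degrees $\le m$. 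By bilinearity under rescaling, we may normalize to $\wt{P}(0)=\wt{Q}(0)=1$. The pairs $(P,Q)$ satisfying this normalization form a compact subset of a finite-dimensional space, on which $(P,Q)\mapsto \wt{PQ}(0)$ is continuous and strictly positive: indeed $\wt{PQ}(0)=0$ would force $PQ\equiv 0$ and hence $P\equiv 0$ or $Q\equiv 0$, contradicting $\wt{P}(0)=\wt{Q}(0)=1$. The minimum is therefore a positive constant $(C')^{-1}$ depending only on $m$ and $n$, which after undoing the normalization and translation yields the claim.
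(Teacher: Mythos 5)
Your proof is correct, but it takes a genuinely different route from the paper's at the key step, the lower bound in (ii). The paper simply cites H\"ormander for (i) and, for the first inequality in \eqref{Hoer_2}, runs an explicit ``division'' argument: using (i) it picks $\eta$ with $|\eta|\le 1$ such that $|Q(\xi+\eta)|\ge \wt{Q}(\xi)/C\ge\wt{Q}(\xi+\eta)/C_1$, shows via Taylor expansion that $|Q(\xi+\eta+\theta)|\ge\wt{Q}(\xi+\eta)/(2C_1)$ persists for all $|\theta|\le(2C_1C_2)^{-1}$, and then writes $P=PQ/Q$ on that small ball to bound $|P(\xi+\eta+\theta)|$ by $2C_1|PQ(\xi+\eta+\theta)|/\wt{Q}(\xi+\eta)$, concluding with another application of (i). You instead reduce everything by translation and rescaling to the point $\xi=0$ and invoke finite-dimensionality: norm equivalence on polynomials of degree $\le m$ for (i), and compactness of the normalized set $\wt{P}(0)=\wt{Q}(0)=1$ together with the integral-domain property of the polynomial ring for the positivity of $\min\wt{PQ}(0)$ in (ii). Both approaches deliver constants depending only on $m$ and $n$, which is all that is needed later when the result is applied uniformly in $\eps$ to representatives of generalized polynomials. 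Your compactness argument is shorter and self-contained (it even supplies the proof of (i) that the paper outsources to \cite[Lemma 10.4.2]{Hoermander:V2}), at the price of being non-constructive; the paper's argument is explicit and quantitative. The one small point worth making explicit in your write-up is the bilinear scaling step that removes the normalization (the cases $P\equiv 0$ or $Q\equiv 0$ being trivial), and that applying (i) to $PQ$ requires the degree-$2m$ constant --- both harmless since the constants still depend only on $m$ and $n$.
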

\begin{proof}
The first assertion is Lemma 10.4.2 in \cite{Hoermander:V2}. Concerning $(ii)$ the second inequality is clear from Leibniz'rule. From \eqref{Hoer_1} for any polynomial $Q$ we find $\eta$ with $|\eta|\le 1$ such that 
\beq
\label{est_dom}
\wt{Q}(\xi)\le C|Q(\xi+\eta)|\le C\wt{Q}(\xi+\eta)
\eeq
for all $\xi$. Since from \cite[(2.1.10)]{Hoermander:63} there exists a constant $C_0$ depending only on $m$ and $n$ such that $\wt{Q}(\xi+\theta)\le (1+C_0|\theta|)^m\wt{Q}(\xi)$ for all $\xi,\theta\in\R^n$, we get
\[
C\wt{Q}(\xi+\eta)\le C(1+C_0|\eta|)^m\wt{Q}(\xi)\le C_1\wt{Q}(\xi),
\]
where the constant $C_1$ does not depend on $Q$. Hence
\[
|Q(\xi+\eta)|\ge \frac{\wt{Q}(\xi)}{C}\ge \frac{\wt{Q}(\xi+\eta)}{C_1}.
\]
Taylor's formula gives $Q(\xi+\eta+\theta)=Q(\xi+\eta)+\sum_{\alpha\neq 0}\frac{Q^{(\alpha)}(\xi+\eta)}{\alpha!}\theta^\alpha$ and then, for $\eta$ chosen as above,
\begin{multline*}
|Q(\xi+\eta+\theta)|\ge |Q(\xi+\eta)|-\sum_{\alpha\neq 0}\frac{|Q^{(\alpha)}(\xi+\eta)|}{\alpha!}|\theta^\alpha|\ge \frac{\wt{Q}(\xi+\eta)}{C_1}-\wt{Q}(\xi+\eta)\sum_{\alpha\neq 0}\frac{1}{\alpha!}|\theta^\alpha|\\
\ge \frac{\wt{Q}(\xi+\eta)}{C_1}-\wt{Q}(\xi+\eta)C_2|\theta|=\wt{Q}(\xi+\eta)(\frac{1}{C_1}-C_2|\theta|)
\end{multline*}
for all $|\theta|\le 1$, with $C_2$ independent of $Q$. It follows that 
\[
|Q(\xi+\eta+\theta)|\ge\frac{1}{2C_1}\wt{Q}(\xi+\eta)
\]
for all $\xi\in\R^n$, for $|\eta|\le 1$ depending on $Q$ and for all $\theta$ with $|\theta|\le (2C_1C_2)^{-1}$. Writing $P$ as $PQ/Q$ we obtain $|P(\xi+\eta+\theta)|\le 2C_1|PQ(\xi+\eta+\theta)|/\wt{Q}(\xi+\eta)$. Concluding, from the first assertion, the property \eqref{est_Hoer} of the polynomial weight functions and the bound from below \eqref{est_dom}, we have, for some $\theta$ suitably smaller than $\min(1,(2C_1C_2)^{-1})$ and $\eta$ depending on $Q$, the inequality
\[
\wt{P}(\xi)\le (1+C_0|\eta|)^m\wt{P}(\xi+\eta)\le C_3|P(\xi+\eta+\theta)|\le C_4\frac{|PQ(\xi+\eta+\theta)|}{\wt{Q}(\xi+\eta)}\le C_5\frac{\wt{PQ}(\xi)}{\wt{Q}(\xi)}
\]
where the constants involved depend only on the order of the polynomials $P$ and $Q$ and the dimension $n$.
\end{proof}
Proposition \ref{prop_dom} clearly holds for representatives $(P_\eps)_\eps$ and $(Q_\eps)_\eps$ of generalized polynomials with the constants $C$, $C'$ and $C''$ independent of $\eps$.
\begin{proposition}
\label{prop_basic_1}
Let $P(D), P_1(D), P_2(D), Q(D), Q_1(D)$ and $Q_2(D)$ be differential operators with constant Colombeau coefficients. 
\begin{itemize}
\item[(i)] If $Q_1(D)\prec P(D)$ and $Q_2(D)\prec P(D)$ then $a_1 Q_1(D)+a_2 Q_2(D)\prec P(D)$ for all $a_1,a_2\in\wt{\C}$. 
\item[(ii)] If $Q_1(D)\prec P_1(D)$ and $Q_2(D)\prec P_2(D)$ then $Q_1Q_2(D)\prec P_1P_2(D)$.
\item[(iii)] $P(D)+aQ(D)\prec P(D)$ for all $a\in\wt{\C}$ if and only if $Q(D)\prec P(D)$.
\end{itemize}
\end{proposition}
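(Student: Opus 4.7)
The three assertions all follow from two elementary facts about the weight function: the estimate in Proposition \ref{prop_dom}(ii) comparing $\wt{PQ}$ with the product $\wt{P}\wt{Q}$, and the Minkowski-type inequalities $\wt{A+B}(\xi)\le \wt{A}(\xi)+\wt{B}(\xi)$ and $\wt{aA}(\xi)=|a|\wt{A}(\xi)$, which are immediate from viewing $\wt{A}(\xi)$ as the $\ell^2$-norm of the vector $(\partial^\alpha A(\xi))_{|\alpha|\le m}$. All estimates will be done at the level of representatives with care that the resulting bounding nets are moderate.

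For (i), I would fix representatives $(Q_{j,\eps})_\eps$ and $(P_\eps)_\eps$ with $\wt{Q_{j,\eps}}(\xi)\le \lambda_{j,\eps}\wt{P_\eps}(\xi)$ for $j=1,2$, where $(\lambda_{j,\eps})_\eps$ are moderate. Applying the Minkowski inequality and the homogeneity yields
\[
\wt{(a_{1,\eps}Q_{1,\eps}+a_{2,\eps}Q_{2,\eps})}(\xi)\le |a_{1,\eps}|\wt{Q_{1,\eps}}(\xi)+|a_{2,\eps}|\wt{Q_{2,\eps}}(\xi)\le \bigl(|a_{1,\eps}|\lambda_{1,\eps}+|a_{2,\eps}|\lambda_{2,\eps}\bigr)\wt{P_\eps}(\xi),
\]
and the coefficient on the right is a moderate net because $a_1,a_2\in\wt{\C}$.

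For (ii), starting from hypotheses $\wt{Q_{j,\eps}}(\xi)\le \lambda_{j,\eps}\wt{P_{j,\eps}}(\xi)$, I would chain the two inequalities of Proposition \ref{prop_dom}(ii) (with the constants $C',C''$ independent of $\eps$, as noted right after the proposition):
\[
\wt{Q_{1,\eps}Q_{2,\eps}}(\xi)\le C''\wt{Q_{1,\eps}}(\xi)\wt{Q_{2,\eps}}(\xi)\le C''\lambda_{1,\eps}\lambda_{2,\eps}\wt{P_{1,\eps}}(\xi)\wt{P_{2,\eps}}(\xi)\le \frac{C''}{C'}\lambda_{1,\eps}\lambda_{2,\eps}\wt{P_{1,\eps}P_{2,\eps}}(\xi).
\]
Again the bounding net $(C''/C')\lambda_{1,\eps}\lambda_{2,\eps}$ is moderate, so $Q_1Q_2(D)\prec P_1P_2(D)$.

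For (iii), the backward direction follows directly from (i) applied with $Q_1=P$, $Q_2=Q$, $a_1=1$, $a_2=a$, since $P(D)\prec P(D)$ trivially (take $\lambda_\eps\equiv 1$). For the forward direction I would specialize the hypothesis to $a=1$, obtaining $\wt{(P_\eps+Q_\eps)}(\xi)\le \lambda_\eps\wt{P_\eps}(\xi)$, and then use Minkowski in the form $\wt{Q_\eps}(\xi)=\wt{(P_\eps+Q_\eps)-P_\eps}(\xi)\le \wt{(P_\eps+Q_\eps)}(\xi)+\wt{P_\eps}(\xi)\le(\lambda_\eps+1)\wt{P_\eps}(\xi)$. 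There is no real obstacle here; the only thing to watch is that all nets involved remain moderate and the constants from Proposition \ref{prop_dom} do not depend on $\eps$, which is already guaranteed since they depend only on $m$ and $n$.
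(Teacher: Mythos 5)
Your proof is correct and follows essentially the same route as the paper: part (ii) is verbatim the paper's chain of inequalities via Proposition \ref{prop_dom}(ii), part (i) simply spells out the Minkowski-type subadditivity that the paper dismisses as trivial, and in part (iii) your explicit rewriting of $Q$ as $(P+Q)-P$ is exactly the paper's reduction to (i) using $-P\prec P$. No gaps; just take care in (iii) to subtract the \emph{same} representative of $P$ that appears on the right-hand side of the hypothesis, which your setup permits.
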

\begin{proof}
$(i)$ The first assertion is trivial.\\
$(ii)$ Working at the level of representatives from Proposition \ref{prop_dom}$(ii)$ we can write $\wt{Q_{1,\eps}Q_{2,\eps}}\le C''\wt{Q_{1,\eps}}\wt{Q_{2,\eps}}$. It follows
\[
\wt{Q_{1,\eps}Q_{2,\eps}}\le C''\lambda_{1,\eps}\lambda_{2,\eps}\wt{P_{1,\eps}}\wt{P_{2,\eps}}\le \frac{C''}{C'}\lambda_{1,\eps}\lambda_{2,\eps}\wt{P_{1,\eps}P_{2,\eps}},
\]
with $(\frac{C''}{C'}\lambda_{1,\eps}\lambda_{2,\eps})_\eps$ moderate net.\\
$(iii)$ One direction is clear. Indeed, since $P\prec P$ from the first assertion of this proposition we have that if
$Q\prec P$ then $P+aQ\prec P$ for all $a\in\wt{\C}$. Conversely, let $P+aQ\prec P$. From $-P\prec P$ and $(i)$ we have that $aQ\prec P$. Finally, choosing $a=1$ we obtain $Q\prec P$.
\end{proof}
The following necessary and sufficient conditions for $Q\prec P$ are directly obtained from the first assertion of Proposition \ref{prop_dom}.
\begin{proposition}
\label{prop_basic_2}
Let $Q(D)$ and $P(D)$ be differential operators with constant Colombeau coefficients. The following statements are equivalent:
\begin{itemize}
\item[(i)] $Q(D)\prec P(D)$;
\item[(ii)] there exist representatives $(Q_\eps)_\eps$ and $(P_\eps)_\eps$ and a moderate net $(\lambda_\eps)_\eps$ such that 
\[
|Q_\eps(\xi)|\le \lambda_\eps \wt{P_\eps}(\xi)
\]
for all $\xi\in\R^n$ and $\eps\in(0,1]$;
\item[(iii)] there exist representatives $(Q_\eps)_\eps$ and $(P_\eps)_\eps$ and a moderate net $(\lambda'_\eps)_\eps$ such that 
\[
\wt{Q_\eps}(\xi,t)\le \lambda'_\eps \wt{P_\eps}(\xi,t)
\]
for all $\xi\in\R^n$, for all $\eps\in(0,1]$ and for all $t\ge 1$.
\end{itemize}
\end{proposition}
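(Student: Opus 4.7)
The plan is to verify the cyclic implications $(i) \Rightarrow (ii) \Rightarrow (iii) \Rightarrow (i)$ by working at the level of the given representatives, ensuring that every absolute constant appearing along the way depends only on $m$ and $n$, so that moderateness of the various nets of multipliers is automatically preserved.

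The two easy implications are $(i)\Rightarrow(ii)$ and $(iii)\Rightarrow(i)$. For the former I would simply note that $|Q_\eps(\xi)|$ is the $\alpha=0$ summand in $\wt{Q_\eps}^{2}(\xi)=\sum_{|\alpha|\le m}|Q_\eps^{(\alpha)}(\xi)|^{2}$, so that $|Q_\eps(\xi)|\le \wt{Q_\eps}(\xi)\le \lambda_\eps\wt{P_\eps}(\xi)$ follows immediately from (i). For the latter I would specialise $t=1$ in (iii) and use $\wt{Q_\eps}(\xi,1)=\wt{Q_\eps}(\xi)$ and $\wt{P_\eps}(\xi,1)=\wt{P_\eps}(\xi)$ to read off the statement of (i) verbatim.

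The substantive step is $(ii)\Rightarrow(iii)$. First I would apply the left-hand side of Proposition \ref{prop_dom}$(i)$ to $Q_\eps$ to obtain $\wt{Q_\eps}(\xi,t)\le C\sup_{|\eta|<t}|Q_\eps(\xi+\eta)|$, and then feed in (ii) at the shifted points to bound this by $C\lambda_\eps\sup_{|\eta|<t}\wt{P_\eps}(\xi+\eta)$. The key remaining estimate is
\[
\sup_{|\eta|<t}\wt{P_\eps}(\xi+\eta)\le C_{n,m}\,\wt{P_\eps}(\xi,t) \qquad \text{for all } t\ge 1,
\]
uniformly in $\eps$. To prove it I would expand $\wt{P_\eps}(\xi+\eta)^{2}=\sum_{\alpha}|P_\eps^{(\alpha)}(\xi+\eta)|^{2}$, apply Proposition \ref{prop_dom}$(i)$ separately to each derivative polynomial $P_\eps^{(\alpha)}$ (which has degree $\le m-|\alpha|$) to obtain $|P_\eps^{(\alpha)}(\xi+\eta)|\le C\,\wt{P_\eps^{(\alpha)}}(\xi,t)$ for $|\eta|<t$, and then re-index the resulting double sum $\sum_{\alpha}\sum_{\beta}|P_\eps^{(\alpha+\beta)}(\xi)|^{2}\,t^{2|\beta|}$ via $\gamma=\alpha+\beta$. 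The inequality $t^{2|\beta|}\le t^{2|\gamma|}=t^{2|\alpha+\beta|}$, valid precisely because $t\ge 1$, then collapses the double sum into a combinatorial multiple of $\wt{P_\eps}(\xi,t)^{2}$, with constant depending only on $n$ and $m$.

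The main obstacle I anticipate is purely bookkeeping: getting the re-indexing of the double sum right and applying $t^{2|\beta|}\le t^{2|\gamma|}$ cleanly. Apart from that, the whole argument is uniform in $\eps$ and the moderateness of the resulting net $\lambda'_\eps$ follows automatically from the moderateness of $\lambda_\eps$ together with constants depending only on $n$ and $m$. This step also clarifies why the hypothesis $t\ge 1$ is essential in the formulation of (iii) and cannot be dropped.
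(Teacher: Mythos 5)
Your proof is correct, and its overall architecture coincides with the paper's: the implications $(i)\Rightarrow(ii)$ and $(iii)\Rightarrow(i)$ are dispatched exactly as you do, and $(ii)\Rightarrow(iii)$ begins in both cases with $\wt{Q_\eps}(\xi,t)\le C\sup_{|\eta|<t}|Q_\eps(\xi+\eta)|\le C\lambda_\eps\sup_{|\eta|<t}\wt{P_\eps}(\xi+\eta)$ via Proposition \ref{prop_dom}$(i)$. The two arguments diverge only in how they bound $\sup_{|\eta|<t}\wt{P_\eps}(\xi+\eta)$ by a constant multiple of $\wt{P_\eps}(\xi,t)$. The paper applies Proposition \ref{prop_dom}$(i)$ once more to $P_\eps$ itself, enlarging the ball radius from $t$ to $t+1$ to absorb the iterated supremum, and then compares $\wt{P_\eps}(\xi,t+1)$ with $\wt{P_\eps}(\xi,t)$ through the factor $(1+t^{-1})^m\le 2^m$; you instead apply the lemma to each derivative $P_\eps^{(\alpha)}$ separately and re-index the double sum $\sum_{\alpha,\beta}|P_\eps^{(\alpha+\beta)}(\xi)|^2t^{2|\beta|}$ using $t^{2|\beta|}\le t^{2|\alpha+\beta|}$, picking up only the combinatorial factor counting decompositions $\gamma=\alpha+\beta$ (at most $(m+1)^n$). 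Both routes invoke $t\ge 1$ at exactly one point and yield constants depending only on $m$ and $n$, so the moderateness of $\lambda'_\eps$ is preserved either way; your version avoids the slightly awkward radius enlargement, while the paper's reuses the dilation estimate $\wt{P}(\xi,t+1)\le(1+t^{-1})^m\wt{P}(\xi,t)$ that reappears elsewhere (e.g.\ in the proof of Proposition \ref{prop_princ_type}).
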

\begin{proof}
The implications $(i)\Rightarrow(ii)$ and $(iii)\Rightarrow(i)$ are trivial. We only have to prove that $(ii)$ implies $(iii)$. Proposition \ref{prop_dom}$(i)$ yields, for $t\ge 1$,
\begin{multline*}
\wt{Q_\eps}(\xi,t)\le C\sup_{|\eta|<t}|Q_\eps(\xi+\eta)|\le C\lambda_\eps\sup_{|\eta|<t}\wt{P_\eps}(\xi+\eta)=C\lambda_\eps\sup_{|\eta|<t}\wt{P_\eps}(\xi+\eta,1)\le C^2\lambda_\eps\sup_{|\eta|<t+1}\wt{P_\eps}(\xi+\eta)\\
\le C^3\lambda_\eps \wt{P_\eps}(\xi,t+1)\le C^3\lambda_\eps (1+t^{-1})^m \wt{P_\eps}(\xi,t).
\end{multline*}
Hence, $\wt{Q_\eps}(\xi,t)\le \lambda'_\eps\wt{P_\eps}(\xi,t)$ for all $\xi\in\R^n$, $t\ge 1$ and $\eps\in(0,1]$ with $\lambda'_\eps=C^3\lambda_\eps 2^m$.
\end{proof}
The $\G$-elliptic polynomials (see \cite[Section 6]{Garetto:08c}) and their corresponding differential operators can be characterized by means of the order relation $\prec$. We recall that a polynomial $P(\xi)$ with coefficients in $\wt{\C}$ is $\G$-elliptic (or equivalently the operator $P(D)$ is $\G$-elliptic) if there exists a representative $(P_{m,\eps})_\eps$ of $P_m$, a constant $c>0$ and $a\in\R$ such that 
\beq
\label{est_ellip}
|P_{m,\eps}(\xi)|\ge c\eps^a|\xi|^m
\eeq
for all $\eps\in(0,1]$ and for $\xi\in\R^n$. Estimate \eqref{est_ellip} is valid for any representative of $P$ with some other constant $c>0$ and on a smaller interval $(0,\eps_0]$. Due to the homogeneity of $P_{m,\eps}$ it is not restrictive to assume \eqref{est_ellip} valid only for all $\xi$ with $|\xi|=1$.
\begin{proposition}
\label{prop_g_ellip}
Let $P(D)$ be a differential operator of order $m$ with coefficients in $\wt{\C}$. $P(D)$ is stronger than any differential operator with coefficients in $\wt{\C}$ of order $\le m$ if and only if it is $\G$-elliptic.
\end{proposition}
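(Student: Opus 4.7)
The plan is to prove the two implications separately, in each direction reducing the strength relation $Q\prec P$ to the pointwise estimate $|Q_\eps(\xi)|\le \lambda_\eps\wt{P_\eps}(\xi)$ via Proposition \ref{prop_basic_2}(ii).

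For the direction ``$\G$-elliptic $\Rightarrow$ $P(D)$ stronger than every operator of order $\le m$'', the key step is to establish
\[
\wt{P_\eps}(\xi)\ge c\,\eps^a\,(1+|\xi|)^m,\qquad \xi\in\R^n,\; \eps\in(0,1],
\]
with $a\in\R$ and $c>0$ independent of $\eps$. Once this is in hand, for any $Q(D)=\sum_{|\beta|\le m}d_\beta D^\beta$ with $d_\beta\in\wt{\C}$, the obvious bound $|Q_\eps(\xi)|\le C_{Q,\eps}(1+|\xi|)^m$ with $(C_{Q,\eps})_\eps$ moderate gives $|Q_\eps(\xi)|\le (C_{Q,\eps}/c)\,\eps^{-a}\,\wt{P_\eps}(\xi)$, and Proposition \ref{prop_basic_2}(ii) yields $Q(D)\prec P(D)$.

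I would obtain the lower bound on $\wt{P_\eps}$ by splitting into two regimes. For $|\xi|\ge R_\eps$, where $R_\eps$ is a moderately growing threshold chosen so that the lower-order part of $P_\eps$ is dominated by the principal part, the $\G$-ellipticity bound $|P_{m,\eps}(\xi)|\ge c\eps^a|\xi|^m$ together with $|P_\eps(\xi)-P_{m,\eps}(\xi)|\le C_\eps(1+|\xi|)^{m-1}$ forces $\wt{P_\eps}(\xi)\ge|P_\eps(\xi)|\ge (c/2)\eps^a|\xi|^m$, which since $|\xi|\ge 1$ absorbs the factor $(1+|\xi|)^m$. For $|\xi|\le R_\eps$ I would exploit the fact that for $|\alpha|=m$ the derivative $\partial^\alpha P_\eps$ is the constant $\alpha!\,c_{\alpha,\eps}$, so
\[
\wt{P_\eps}(\xi)^2\ge \sum_{|\alpha|=m}|\alpha!|^2|c_{\alpha,\eps}|^2.
\]
Since on the finite-dimensional space of homogeneous polynomials of degree $m$ in $n$ variables the $\ell^2$-norm of the coefficients is equivalent to the sup-norm on the unit sphere, the right-hand side dominates a positive multiple of $\sup_{|\omega|=1}|P_{m,\eps}(\omega)|^2\ge c^2\eps^{2a}$, giving the uniform bound $\wt{P_\eps}(\xi)\ge c'\eps^a$. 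Because $(1+|\xi|)^m\le (1+R_\eps)^m$ in this regime and $(1+R_\eps)^{-m}$ is itself a positive moderate net, we again conclude $\wt{P_\eps}(\xi)\ge c''\eps^{a''}(1+|\xi|)^m$, and combining the two regimes yields the claim.

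For the converse I would test the strength hypothesis against the operators $D^\alpha$, $|\alpha|=m$, whose symbols $\xi^\alpha$ are independent of $\eps$. From $\wt{\xi^\alpha}(\xi)\ge|\xi^\alpha|$ and Definition \ref{def_ord_rel} there are moderate nets $(\lambda_{\alpha,\eps})_\eps$ with $|\xi^\alpha|\le \lambda_{\alpha,\eps}\wt{P_\eps}(\xi)$. Taking the maximum over the finitely many $\alpha$ with $|\alpha|=m$ and using $\max_{|\alpha|=m}|\xi^\alpha|\ge c|\xi|^m$, which follows from the multinomial identity $|\xi|^{2m}=\sum_{|\alpha|=m}\frac{m!}{\alpha!}(\xi^\alpha)^2$, gives
\[
\wt{P_\eps}(\xi)\ge c_0\,\eps^N|\xi|^m,\qquad \xi\in\R^n,\;\eps\in(0,1].
\]
The last step is to upgrade this lower bound on $\wt{P_\eps}$ to the required lower bound on $|P_{m,\eps}|$. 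Fix $\omega\in\R^n$ with $|\omega|=1$ and set $\xi=t\omega$ for $t>0$. Viewing $\wt{P_\eps}(t\omega)^2=\sum_{|\alpha|\le m}|\partial^\alpha P_\eps(t\omega)|^2$ as a polynomial in $t$, its leading term is $|P_{m,\eps}(\omega)|^2 t^{2m}$ coming from $\alpha=0$, while every other contribution has degree $\le 2m-1$ in $t$ and coefficients bounded by a moderate net independent of $\omega$ on the unit sphere. Combining with $c_0^2\eps^{2N}t^{2m}\le \wt{P_\eps}(t\omega)^2\le |P_{m,\eps}(\omega)|^2 t^{2m}+C_\eps t^{2m-1}$, dividing by $t^{2m}$ and letting $t\to\infty$ yields $|P_{m,\eps}(\omega)|^2\ge c_0^2\eps^{2N}$ for every $|\omega|=1$, which by the homogeneity of $P_{m,\eps}$ is exactly the $\G$-ellipticity estimate $|P_{m,\eps}(\xi)|\ge c_0\eps^N|\xi|^m$.

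The main technical nuisance is bookkeeping: one has to invert positive moderate nets (as in $\lambda_{\alpha,\eps}\mapsto\lambda_{\alpha,\eps}^{-1}$ and $(1+R_\eps)^{-m}$) without losing moderateness, and patch the bounded and unbounded $|\xi|$ regimes in the first direction while keeping uniform control of the $\eps$-exponents. Everything else reduces to elementary polynomial analysis.
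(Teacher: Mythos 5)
Your proof is correct, and in the necessity direction it takes a genuinely different route from the paper. For the sufficiency direction both arguments split into $|\xi|\ge R_\eps$ and $|\xi|\le R_\eps$ and coincide on the unbounded region; on the bounded region the paper invokes the invertibility of $\wt{P}$ at a point $\xi_0$ (propagated via \eqref{est_inv}, i.e.\ Lemma 7.5 of H\"ormann--Oberguggenberger), whereas you bound $\wt{P_\eps}(\xi)^2$ from below by the constant top-order derivatives $\sum_{|\alpha|=m}(\alpha!)^2|c_{\alpha,\eps}|^2$ and use equivalence of norms on the finite-dimensional space of degree-$m$ homogeneous polynomials; your variant is more self-contained since it bypasses the external invertibility lemma. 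For the necessity direction the paper argues by contraposition: it extracts a sequence $\xi_{\eps_q}$ on the unit sphere where $P_{m,\eps}$ is negligible, selects a coordinate $i$ with $(|\xi_{i,\eps}|)_\eps\notin\Neg$, and shows that the single monomial $Q(\xi)=\xi_i^m$ fails to satisfy $Q\prec P$ by evaluating along the rays $t\xi_\eps$. You instead argue directly: testing the strength hypothesis only against the finitely many $D^\alpha$ with $|\alpha|=m$ and using the multinomial identity, you obtain the global bound $\wt{P_\eps}(\xi)\ge c_0\eps^N|\xi|^m$, and then recover ellipticity of the principal symbol by comparing leading powers of $t$ in $\wt{P_\eps}(t\omega)^2$. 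Both proofs share the mechanism of comparing $\wt{P_\eps}$ with $t^m$ along rays, but your direct version avoids the sequence extraction and the ad hoc choice of counterexample, and in fact proves the slightly sharper statement that $P(D)\succ D^\alpha$ for all $|\alpha|=m$ already forces $\G$-ellipticity. The only points to be careful about, which you correctly flag as bookkeeping, are that Definition \ref{def_ord_rel} permits a different representative of $P$ for each test operator $D^\alpha$ (reconcile them via $\wt{P'_\eps}^2\le 2\wt{P_\eps}^2+n_\eps\lara{\xi}^{2m}$ with $(n_\eps)_\eps\in\Neg$, exactly as the paper does in its own necessity argument) and that the resulting ellipticity estimate may initially hold only on a subinterval $(0,\eps_0]$, which is harmless by the remark following \eqref{est_ellip}.
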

\begin{proof}
We assume that $P(D)$ is $\G$-elliptic and we prove that the $\G$-ellipticity is a sufficient condition. Let $(P_{m,\eps})_\eps$ be a representative of $P_m$ such that $|P_{m,\eps}(\xi)|\ge c\eps^a|\xi|^m$ for some constants $c>0$ and $a\in\R$, for all $\xi\in\R^n$ and for all $\eps\in(0,1]$. It follows that $P_m(\xi)$ is invertible in any $\xi=\xi_0$ of $\R^n$ and therefore from Remark \ref{rem_HO_inv} (and more precisely from \cite[Proposition 7.6]{HO:03}) we have that $\wt{P}$ is invertible in $\xi_0$. 
%We begin by proving the invertibility of $\wt{P_\eps}(\xi)$ in some $\xi_0$. First, from
%\beq
%\label{est_1_ell}
%|P_{m,\eps}(\xi)|=\biggl|\sum_{|\beta|=m}c_\beta\xi^\beta\biggr|\ge c\eps^a|\xi|
%\eeq
%we obtain for $\xi_0=(1,0,...,0)$ and $\beta_0=(m,0,...,0)$ the inequality
%\[
%|P_{m,\eps}(\xi_0)|=\biggl|\sum_{|\beta|=m}c_\beta \xi_0^\beta\biggr|=|c_{\beta_0}|\ge c\eps^a.
%\]
%This means that the coefficient $c_{\beta_0}(x_0)$ is invertible. Hence, from
%\[
%\wt{P_{\eps}}^2(\xi)=\sum_{|\alpha|\le m}|D^\alpha_\xi P_\eps(\xi)|^2\ge \sum_{|\alpha|= m}|D^\alpha_\xi P_{\eps}(\xi)|^2 \ge \biggl|\sum_{|\beta|=m}c_\beta(x_0)\beta_0 !\binom{\beta}{\beta_0}\xi^{\beta-\beta_0}\biggr|^2= (m!)^2|c_{\beta_0}|^2
%\]
%we have that $\wt{P_{\eps}}$ is invertible in any point of $\R^n$. Going back to the estimate \eqref{est_1_ell}, we have, for $(C_\eps)_\eps$ strictly nonzero, the estimate
The estimate $|P_{m,\eps}(\xi)|\ge c\eps^a|\xi|^m$ yields
\[
c\eps^a|\xi|^m\le |P_{m,\eps}(\xi)|\le |P_\eps(\xi)|+|P_\eps(\xi)-P_{m,\eps}(\xi)|\le |P_\eps(\xi)|+C_\eps(1+|\xi|^{m-1}),
\]
where $(c_\eps)_\eps$ is a strictly nonzero net. Assuming $|\xi|\ge 2C_\eps c^{-1}\eps^{-a}$ we obtain the inequality
\[
c\eps^a |\xi|^m\le 2|P_\eps(\xi)|+2C_\eps\le 2\wt{P_\eps}(\xi)+2C_\eps.
\]
For $|\xi|\ge R_\eps$, where $R_\eps=\max\{(4C_\eps c^{-1}\eps^{-a}+1)^{\frac{1}{m}},2C_\eps c^{-1}\eps^{-a}\}$, the following bound from below
\beq
\label{est_Hoer_1}
\frac{c}{4}\eps^{a}(1+|\xi|^m)\le \wt{P_\eps}(\xi)
\eeq
holds. Since, from the invertibility of $\wt{P}$ in $\xi_0$ there exists a strictly nonzero net $(\lambda_\eps)_\eps$ such that
\[
\wt{P_\eps}(\xi)\ge \lambda_\eps(1+C|\xi_0-\xi|)^{-m},
\]
for all $\eps\in(0,1]$, we can extend \eqref{est_Hoer_1} to all $\xi\in\R^n$. More precisely,
\[
\lambda_\eps(1+C|\xi_0|+CR_\eps)^{-m}(1+R_\eps)^{-m}(1+|\xi|)^m\le \lambda_\eps (1+C|\xi_0-\xi|)^{-m}(1+|\xi|)^{-m}(1+|\xi|)^m\le \wt{P_\eps}(\xi)
\]
holds for $|\xi|\le R_\eps$. The net $(R_\eps)_\eps$ is strictly nonzero. Hence there exists a moderate net $\omega_\eps$ such that
\[
(1+|\xi|^m)\le\omega_\eps \wt{P_\eps}(\xi)
\]
for all $\eps\in(0,1]$ and $\xi\in\R^n$. Now if $Q(D)$ is a differential operator with coefficients in $\wt{\C}$ of order $m'\le m$, \eqref{est_Hoer} yields
\[
\wt{Q_\eps}(\xi)\le \wt{Q_\eps}(0)(1+C|\xi|)^{m'}.
\]
Hence,
\[
\wt{Q_\eps}(\xi)\le \wt{Q_\eps}(0)c'(1+|\xi|^m)\le c'\wt{Q_\eps}(0)\omega_\eps\,\wt{P_\eps}(\xi),
\]
where $(c'\wt{Q_\eps}(0)\omega_\eps)_\eps$ is moderate. This means that $Q(D)\prec P(D)$.

We now prove that the $\G$-ellipticity of $P(D)$ is necessary in order to have $Q(D)\prec P(D)$ for all $Q$ of order $\le m$. If $P(D)$ is not $\G$-elliptic then we can find a representative $(P_\eps)_\eps$, a decreasing sequence $\eps_q\to 0$ and a sequence $\xi_{\eps_q}$ with $|\xi_{\eps_q}|=1$ such that
\[
|P_{m,\eps_q}(\xi_{\eps_q})|<\eps_q^q
\]
for all $q\in\N$. We set $\xi_\eps=\xi_{\eps_q}$ for $\eps=\eps_q$ and $0$ otherwise. By construction $(|\xi_\eps|)_\eps\not\in\Neg$ and 
\[
(P_{m,\eps}(\xi_\eps))_\eps\in\Neg.
\]
Since there exists a moderate net $(c_\eps)_\eps$ such that
\[
\wt{P_\eps}^2(t\xi)\le 2t^{2m}|P_{m,\eps}(\xi)|^2+c_\eps t^{2m-2}\lara{\xi}^{2m-2}
\]
for all $\xi\in\R^n$ and $t\ge 1$, we obtain
\[
\wt{P_\eps}(t\xi_\eps)\le t^m n_\eps+c'_\eps t^{m-1},
\]
where $(n_\eps)_\eps\in\Neg$ and $(c'_\eps)_\eps\in\EM$. Since $(|\xi_\eps|)_\eps\in\EM\setminus \Neg$ there exists a component $\xi_{i,\eps}$ such that $(|\xi_{i,\eps}|)_\eps\in\EM\setminus\Neg$. We take the homogeneous polynomial of degree $m$ $Q(\xi)=\xi_i^m$ and we prove that $Q(D)\not\prec P(D)$. By Proposition \ref{prop_basic_2} assume that there exists representatives $(Q_\eps)_\eps$ and $(P'_\eps)_\eps$ of $Q$ and $P$ respectively and a moderate net $(\lambda_\eps)_\eps$ such that
\[
|Q_\eps(\xi)|\le \lambda_\eps\wt{P'_\eps}(\xi)
\]
for all $\xi$ and $\eps$. $Q_\eps(\xi)$ is of the form $(1+n_{1,\eps})\xi_i^m$ where $(n_{1,\eps})_\eps\in\Neg$ and concerning $\wt{P'_\eps}(\xi)$ we have $\wt{P'_\eps}^2(\xi)\le 2\wt{P_\eps}^2(\xi)+n_{2,\eps}\lara{\xi}^{2m}$ with $(n_{2,\eps})_\eps\in\Neg$. This entails the estimate
\[
|(1+n_{1,\eps})\xi_i^m|\le \lambda'_\eps\wt{P_\eps}(\xi)+n_{3,\eps}\lara{\xi}^m
\]
and for $\xi= t\xi_\eps$, $t\ge 1$,
\[
|1+n_{1,\eps}|t^m|\xi_{i,\eps}^m|\le \lambda'_\eps\wt{P_\eps}(t\xi_\eps)+n_{3,\eps}\lara{t\xi_\eps}^m\le n_{4,\eps}t^m+c''_\eps t^{m-1},
\]
where $(c''_\eps)_\eps$ is moderate and strictly nonzero. The inequality
\[
\frac{|1+n_{1,\eps}||\xi_{i,\eps}^m|-n_{4,\eps}}{c''_\eps}\le t^{-1}
\]
is valid for all $t\ge 1$. Hence $|1+n_{1,\eps}||\xi_{i,\eps}^m|-n_{4,\eps}\le 0$ and from the invertibility of $(|1+n_{1,\eps}|)_\eps$ we get
\[
|\xi_{i,\eps}|^m\le \frac{n_{4,\eps}}{|1+n_{1,\eps}|},
\]
with $(\frac{n_{4,\eps}}{|1+n_{1,\eps}|})_\eps\in\Neg$. Concluding the net $(|\xi_{i,\eps}|)_\eps$ is negligible in contradiction with our assumptions.
\end{proof}
We introduce another order relation which is closely connected with $Q(D)\prec P(D)$.
\begin{definition}
\label{def_domin}
Let $P(D)$ and $Q(D)$ be differential operators with coefficients in $\wt{\C}$. We say that $P(D)$ dominates $Q(D)$ (and we write $P(D)\succ\succ Q(D)$ or $Q(D)\prec\prec P(D)$) if there exist 
\begin{itemize}
\item[-] representatives $(P_\eps)_\eps$ and $(Q_\eps)_\eps$ of $P$ and $Q$ respectively, 
\item[-] a moderate net $(\lambda_\eps)_\eps$,
\item[-] a function $C(t)>0$ with $\lim_{t\to +\infty}C(t)=0$ and the property
\[
\forall a\in\R\ \exists b\in\R\ \forall t\ge \eps^b\qquad C(t)\le \eps^a,
\]
\end{itemize}
such that
\[
\wt{Q_\eps}(\xi,t)\le \lambda_\eps C(t)\wt{P_\eps}(\xi,t)
\]
for all $\xi\in\R^n$, $\eps\in(0,1]$ and $t\ge 1$.
\end{definition}
Clearly $Q(D)\prec\prec P(D)$ implies $Q(D)\prec P(D)$ and $P(D)$ dominates $P^{(\alpha)}(D)$ for all $\alpha\neq 0$. Indeed, $\wt{P^{(\alpha)}_\eps}(\xi,t)\le t^{-|\alpha|}\wt{P_\eps}(\xi,t)$. 
\begin{proposition}
\label{prop_basic_3}
Let $P(D), P_1(D), P_2(D), Q_1(D)$ and $Q_2(D)$ be differential operators with constant Colombeau coefficients. 
\begin{itemize}
\item[(i)] If $Q_1(D)\prec\prec P(D)$ and $Q_2(D)\prec\prec P(D)$ then $a_1 Q_1(D)+a_2 Q_2(D)\prec\prec P(D)$ for all $a_1,a_2\in\wt{\C}$. 
\item[(ii)] If $Q_1(D)\prec\prec P_1(D)$ and $Q_2(D)\prec P_2(D)$ then $Q_1Q_2(D)\prec\prec P_1P_2(D)$.
\end{itemize}
\end{proposition}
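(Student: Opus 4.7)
Both parts extend the proof of Proposition \ref{prop_basic_1} to the $t$-dependent framework of the weight $\wt{R}(\xi,t)=\bigl(\sum_\alpha|R^{(\alpha)}(\xi)|^2 t^{2|\alpha|}\bigr)^{1/2}$, the new ingredient being a careful bookkeeping of the dominating function $C(t)$.

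For $(i)$, I fix representatives and, by the assumption $Q_i\prec\prec P$, I pick moderate nets $(\lambda_{i,\eps})_\eps$ and positive functions $C_i(t)$ fulfilling Definition \ref{def_domin} so that $\wt{Q_{i,\eps}}(\xi,t)\le \lambda_{i,\eps}C_i(t)\wt{P_\eps}(\xi,t)$ for $i=1,2$, $\xi\in\R^n$, $\eps\in(0,1]$ and $t\ge 1$. Since for each $(\xi,t)$ the map $R\mapsto \wt{R}(\xi,t)$ is a seminorm (the $\ell^2$-norm of the vector $(R^{(\alpha)}(\xi)t^{|\alpha|})_\alpha$), the combination $R_\eps:=a_{1,\eps}Q_{1,\eps}+a_{2,\eps}Q_{2,\eps}$ satisfies
\[
\wt{R_\eps}(\xi,t)\le |a_{1,\eps}|\wt{Q_{1,\eps}}(\xi,t)+|a_{2,\eps}|\wt{Q_{2,\eps}}(\xi,t)\le \lambda_\eps C(t)\wt{P_\eps}(\xi,t),
\]
with $\lambda_\eps:=|a_{1,\eps}|\lambda_{1,\eps}+|a_{2,\eps}|\lambda_{2,\eps}$ (moderate, since each $(a_{i,\eps})_\eps$ is a representative of $a_i\in\wt{\C}$) and $C(t):=\max(C_1(t),C_2(t))$. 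It remains to check that $C$ still belongs to the admissible class: given $a\in\R$ pick $b_i$ for $C_i$ and set $b:=\min(b_1,b_2)$; since $\eps\in(0,1]$ implies $\eps^b\ge\eps^{b_i}$, the inequality $t\ge\eps^b$ forces $C_i(t)\le\eps^a$ for both indices, hence $C(t)\le\eps^a$.

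For $(ii)$, the hypothesis $Q_1\prec\prec P_1$ yields $\wt{Q_{1,\eps}}(\xi,t)\le\lambda_{1,\eps}C(t)\wt{P_{1,\eps}}(\xi,t)$, while Proposition \ref{prop_basic_2}(iii) upgrades $Q_2\prec P_2$ to $\wt{Q_{2,\eps}}(\xi,t)\le\lambda_{2,\eps}\wt{P_{2,\eps}}(\xi,t)$ for all $t\ge 1$. What I still need is the $t$-uniform analogue of \eqref{Hoer_2},
\[
C'\wt{R_1}(\xi,t)\wt{R_2}(\xi,t)\le \wt{R_1R_2}(\xi,t)\le C''\wt{R_1}(\xi,t)\wt{R_2}(\xi,t),
\]
valid for polynomials of bounded degree and all $t\ge 1$ with constants independent of $t$. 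The upper bound is Leibniz' rule; the lower bound is obtained by repeating verbatim the proof of Proposition \ref{prop_dom}(ii), using Proposition \ref{prop_dom}(i) at scale $t$ and noting that $C,C_0,C_1,C_2$ in that proof depend only on the degrees and on $n$. Chaining the three inequalities gives
\[
\wt{Q_{1,\eps}Q_{2,\eps}}(\xi,t)\le C''\wt{Q_{1,\eps}}(\xi,t)\wt{Q_{2,\eps}}(\xi,t)\le \frac{C''}{C'}\lambda_{1,\eps}\lambda_{2,\eps}C(t)\wt{P_{1,\eps}P_{2,\eps}}(\xi,t),
\]
which is exactly $Q_1Q_2\prec\prec P_1P_2$ with moderate net $(C''/C')\lambda_{1,\eps}\lambda_{2,\eps}$ and dominating function $C$ inherited from $Q_1\prec\prec P_1$.

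\textbf{Main obstacle.} The only non-cosmetic step is the $t$-uniform version of \eqref{Hoer_2} in $(ii)$. Once one verifies that the constants in the proof of Proposition \ref{prop_dom}(ii) are genuinely independent of the scaling parameter $t$ (which they are, since Proposition \ref{prop_dom}(i) is itself uniform in $t$ and the auxiliary vectors $\eta,\theta$ are chosen in a fixed ball), the estimate $Q_1Q_2\prec\prec P_1P_2$ assembles in one line. Part $(i)$ is essentially bookkeeping on the class of admissible functions $C(t)$, the relevant observation being that this class is stable under finite maxima.
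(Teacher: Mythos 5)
Your proof is correct and follows essentially the same route as the paper: part $(i)$ is exactly the bookkeeping the paper dismisses as trivial, and part $(ii)$ rests on the same $t$-uniform product estimate $C'\wt{R_1}(\xi,t)\wt{R_2}(\xi,t)\le \wt{R_1R_2}(\xi,t)\le C''\wt{R_1}(\xi,t)\wt{R_2}(\xi,t)$ followed by the identical chaining via Proposition \ref{prop_basic_2}$(iii)$. The only (cosmetic) difference is that the paper gets this estimate in one line by applying Proposition \ref{prop_dom}$(ii)$ to the rescaled polynomials $P(t\xi)$ and $Q(t\xi)$, for which $\wt{Q(t\,\cdot)}(\xi/t)=\wt{Q}(\xi,t)$, instead of rerunning the proof with the extra parameter $t$ as you propose.
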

\begin{proof}
The first statement is trivial. By applying Proposition \ref{prop_dom}$(ii)$ to $P(t\xi)$ and $Q(t\xi)$ we obtain for any polynomials $P$ and $Q$ the estimate
\beq
\label{est_dez}
C'\wt{P}(\xi,t)\wt{Q}(\xi,t)\le \wt{PQ}(\xi,t)\le C''\wt{P}(\xi,t)\wt{Q}(\xi,t)
\eeq
where $C'$ and $C''$ depend only on the order of $P$ and $Q$. Hence, for all $t\ge 1$ we have
\[
\wt{Q_{1,\eps}Q_{2,\eps}}(\xi,t)\le C''\wt{Q_{1,\eps}}(\xi,t)\wt{Q_{2,\eps}}(\xi,t)\le C''\lambda_{1,\eps} C(t)\wt{P_{1,\eps}}(\xi,t)\wt{Q_{2,\eps}}(\xi,t).
\]
Proposition \ref{prop_basic_2}$(iii)$ combined with the estimate \eqref{est_dez} yields
\[
\wt{Q_{1,\eps}Q_{2,\eps}}(\xi,t)\le C''\lambda_{1,\eps} C(t)\wt{P_{1,\eps}}(\xi,t)\lambda_{2,\eps}\wt{P_{2,\eps}}(\xi,t)\le \lambda_\eps C(t)\wt{P_{1,\eps}P_{2,\eps}}(\xi,t),
\]
valid for all $t\ge 1$, for all $\eps\in(0,1]$ and for all $\xi\in\R^n$.
\end{proof}
The order relation $\prec\prec$ is used in comparing an operator of principal type with a differential operator of order strictly smaller.
\begin{definition}
\label{def_princ_point}
A partial differential operator $P(D)$ with constant Colombeau coefficients is said to be of principal type if there exists a representative $(P_{m,\eps})_\eps$ of the principal symbol $P_{m}$, $a\in\R$ and $c>0$ such that
\[
|\nabla_\xi P_{m,\eps}(\xi)|\ge c\eps^a|\xi|^{m-1}
\]
for all $\eps\in(0,1]$ and all $\xi\in\R^n$.
\end{definition}
As for $\G$-elliptic operators the previous estimate holds for any representative $(P_\eps)_\eps$ of $P$, for some constant $c$ and in an enough small interval $(0,\eps_0]$.
\begin{proposition}
\label{prop_princ_type}
Let $P(D)$ be a differential operator with coefficients in $\wt{\C}$ of principal type and degree $m$ and let one of the coefficients of $P_m(D)$ be invertible. Hence,
\begin{itemize}
\item[(i)] $P(D)$ dominates any differential operator with coefficients in $\wt{\C}$ of order $\le m-1$;
\item[(ii)] if $Q(D)$ has order $m$ and there exists a moderate net $(\lambda_\eps)_\eps$ and representatives $(Q_{m,\eps})_\eps$ and $(P_{m,\eps})_\eps$ such that
\[
|Q_{m,\eps}(\xi)|\le \lambda_\eps |P_{m,\eps}(\xi)|
\]
for all $\xi\in\R^n$ and $\eps\in(0,1]$, then $Q(D)\prec P(D)$.
\end{itemize}
\end{proposition}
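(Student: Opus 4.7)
For part (i), the plan is to combine two lower bounds on $\wt{P_\eps}(\xi,t)$ with a standard upper bound on $\wt{R_\eps}(\xi,t)$ for an arbitrary $R(D)$ of order $\le m-1$. Since some coefficient $c_\beta$ of $P_m$ with $|\beta|=m$ is invertible in $\wt{\C}$, the derivative $P_\eps^{(\beta)}(\xi)=\beta!\,c_{\beta,\eps}$ is a constant in $\xi$ with $|P_\eps^{(\beta)}(\xi)|\ge \beta!\,\eps^{N_0}$ for some $N_0\in\N$; retaining the corresponding term in the defining sum of $\wt{P_\eps}(\xi,t)^2$ produces
\[
\wt{P_\eps}(\xi,t)\ge \beta!\,\eps^{N_0}\,t^m\qquad\text{for all }\xi\in\R^n\text{ and }t\ge 1.
\]
The principal-type hypothesis $|\nabla_\xi P_{m,\eps}(\xi)|\ge c\eps^a|\xi|^{m-1}$, combined with a triangle inequality against the lower-order contribution $\nabla_\xi(P_\eps-P_{m,\eps})$, yields $|\nabla_\xi P_\eps(\xi)|\ge (c/2)\eps^a|\xi|^{m-1}$ for $|\xi|\ge K_\eps$, where $K_\eps$ is a moderate net; retaining the $|\alpha|=1$ summands then gives $\wt{P_\eps}(\xi,t)\ge (c/2)\eps^a|\xi|^{m-1}t$ on $|\xi|\ge K_\eps$.

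On the other side, any $R$ of order $\le m-1$ obeys $|R_\eps^{(\alpha)}(\xi)|\le C_{R_\eps}(1+|\xi|)^{m-1-|\alpha|}$; splitting each summand of $\wt{R_\eps}(\xi,t)^2$ according to whether $t\le 1+|\xi|$ yields $\wt{R_\eps}(\xi,t)\le C\,C_{R_\eps}\bigl((1+|\xi|)^{m-1}+t^{m-1}\bigr)$ for $t\ge 1$. Forming the ratio of upper and lower bounds and treating separately the cases $|\xi|\le K_\eps$ (where only the $t^m$ bound is used) and $|\xi|>K_\eps$ (where both bounds are used) leads to
\[
\wt{R_\eps}(\xi,t)\le \frac{\lambda_\eps}{t}\,\wt{P_\eps}(\xi,t),\qquad \xi\in\R^n,\ t\ge 1,\ \eps\in(0,1],
\]
for a suitable moderate net $(\lambda_\eps)_\eps$. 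The function $C(t)=1/t$ meets the requirements of Definition \ref{def_domin}, since $1/t\le\eps^a$ whenever $t\ge\eps^{-a}$, so $R(D)\prec\prec P(D)$.

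For part (ii), the plan is to decompose $Q=Q_m+R$ with $R:=Q-Q_m$ of order $\le m-1$. Part (i) gives $R(D)\prec\prec P(D)$, hence $R(D)\prec P(D)$. Proposition \ref{prop_dom}(i) at $t=1$, applied first to $Q_{m,\eps}$ and then to $P_{m,\eps}$, translates the pointwise hypothesis $|Q_{m,\eps}(\xi)|\le \lambda_\eps|P_{m,\eps}(\xi)|$ into
\[
\wt{Q_{m,\eps}}(\xi)\le C\sup_{|\eta|<1}|Q_{m,\eps}(\xi+\eta)|\le C\lambda_\eps\sup_{|\eta|<1}|P_{m,\eps}(\xi+\eta)|\le C^2\lambda_\eps\wt{P_{m,\eps}}(\xi).
\]
To pass from $\wt{P_{m,\eps}}$ to $\wt{P_\eps}$ the plan is to write $P_m=P-(P-P_m)$ and invoke part (i) at $t=1$ for the order-$(m-1)$ operator $P-P_m$. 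Together these yield $Q_m(D)\prec P(D)$, and Proposition \ref{prop_basic_1}(i) then produces $Q(D)\prec P(D)$.

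The main obstacle is to guarantee that the lower bound on $\wt{P_\eps}(\xi,t)$ is uniform in $\xi\in\R^n$: the principal-type estimate degenerates as $|\xi|\to 0$, and it is precisely to cover this regime that the invertibility of one coefficient of $P_m$ is essential, forcing a $t^m$ contribution that is independent of $\xi$. Once both bounds are in hand, combining the moderate nets and verifying that the damping $C(t)=1/t$ obeys the $\eps$-uniform decay demanded by Definition \ref{def_domin} is routine.
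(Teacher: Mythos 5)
Your proof is correct and follows essentially the same strategy as the paper: a two-region lower bound for $\wt{P_\eps}(\xi,t)$ (the gradient estimate from the principal-type hypothesis for $|\xi|\ge K_\eps$, the invertible top-order coefficient for $|\xi|\le K_\eps$), the damping function $C(t)=1/t$, and in part (ii) the decomposition $Q=Q_m+(Q-Q_m)$ with part (i) absorbing everything of order $\le m-1$. The only differences are in execution — you bound $\wt{R_\eps}(\xi,t)$ directly from derivative estimates where the paper bounds $|Q_\eps(\xi)|$ pointwise and upgrades via the sup-over-balls equivalence of Proposition \ref{prop_dom}(i), and conversely in (ii) you use that equivalence where the paper expands $\wt{Q_{m,\eps}}^2$ termwise — and these variants are interchangeable here.
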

\begin{proof}
$(i)$ Let $(P_{m,\eps})_\eps$ be a representative of $P_m$ such that $|\nabla P_{m,\eps}(\xi)|\ge c\eps^a|\xi|^{m-1}$ for some constants $c>0$ and $a\in\R$, for all $\xi\in\R^n$ and for all $\eps\in(0,1]$. We have, for some strictly nonzero net $(C_\eps)_\eps$ the inequality
\[
|\nabla_\xi P_\eps(\xi)|\ge |\nabla_\xi P_{m,\eps}(\xi)|-|\nabla_\xi(P_\eps-P_{m,\eps})|\ge c'\eps^a\lara{\xi}^{m-1}-C_\eps\lara{\xi}^{m-2}\ge \frac{1}{2}c'\eps^a(1+|\xi|^{m-1}),
\]
valid for $|\xi|\ge R_\eps$ with $(R_\eps)_\eps$ moderate and big enough. Hence,
\[
\biggl(\sum_{\alpha\neq 0}|P^{(\alpha)}_\eps(\xi)|^2\biggr)^{\frac{1}{2}}\ge |\nabla_\xi P_\eps(\xi)|\ge \frac{1}{2}c'\eps^a(1+|\xi|^{m-1})
\]
for $|\xi|\ge R_\eps$. 
%By invertibility of $\wt{P}$ in $\xi_0$ there exists a strictly nonzero net $(\lambda_\eps)_\eps$ such that
%\[
%\wt{P_\eps}(\xi)\ge \lambda_\eps(1+C|\xi_0-\xi|)^{-m},
%\]
%for all $\eps\in(0,1]$. Hence,
%\[
%\lambda_\eps(1+C|\xi_0|+CR_\eps)^{-m}(1+R_\eps)^{-m+1}(1+|\xi|)^{m-1}\le \lambda_\eps (1+C|\xi_0-\xi|)^{-m}(1+|\xi|)^{-m+1}(1+|\xi|)^{m-1}\le \wt{P_\eps}(\xi)
%\]
%holds for $|\xi|\le R_\eps$. Choosing $(R_\eps)_\eps$ is strictly nonzero we conclude that there exists a moderate net $\omega_\eps$ such that
%\[
%(1+|\xi|^{m-1})\le\omega_\eps \wt{P_\eps}(\xi)
%\]
%for all $\eps\in(0,1]$ and $\xi\in\R^n$. It follows, for some strictly non-zero net $(\lambda_\eps)_\eps$,
%\beq
%\label{est_oggi}
%\wt{P_\eps}^2(\xi,t)\ge t^2|\nabla_\xi P_{\eps}(\xi)|^2\ge t^2\lambda_\eps^2(1+|\xi|^{2m-2}),\qquad\quad \xi\in\R^n, t\ge 1.
%\eeq
From the invertibility of one of the coefficients of the principal part we get the bound from below
\[
\omega_\eps\le \sum_{\alpha\neq 0}|P^{(\alpha)}_\eps(\xi)|^2,
\]
where $(\omega_\eps)_\eps$ is moderate. It follows, for $|\xi|\le R_\eps$, 
\[
\sum_{\alpha\neq 0}|P^{(\alpha)}_\eps(\xi)|^2\ge \omega_\eps(1+|\xi|^{m-1})^{-2}(1+|\xi|^{m-1})^2\ge \omega_\eps ((1+(R_\eps)^{m-1})^{-2}(1+|\xi|^{m-1})^2.
\]
Summarizing, we find a moderate and strictly nonzero net $(\lambda_\eps)_\eps$ such that 
\[
\lambda_\eps^2(1+|\xi|^{2m-2})\le \sum_{\alpha\neq 0}|P^{(\alpha)}_\eps(\xi)|^2
\]
for all $\xi\in\R^n$ and $\eps\in(0,1]$. This implies 
\beq
\label{est_oggi}
\lambda_\eps^2 t^2(1+|\xi|^{2m-2})\le \sum_{\alpha\neq 0}t^{2|\alpha|}|P^{(\alpha)}_\eps(\xi)|^2\le \wt{P_\eps}(\xi,t),
\eeq
for all $t\ge 1$, $\xi\in\R^n$ and $\eps\in(0,1]$.

Let $Q(D)$ be a differential operator with coefficients in $\wt{\C}$ and order $m'\le m-1$. We have, for some moderate net $(c_\eps)_\eps$ the inequality
\[
|Q_\eps(\xi)|\le c_\eps(1+|\xi|)^{m-1}
\]
and therefore from \eqref{est_oggi}
\[
|Q_\eps(\xi)|\le (c_\eps \lambda_\eps^{-1}t^{-1})t^{1}\lambda_\eps(1+|\xi|^{m-1})\le (c'_\eps \lambda_\eps^{-1}t^{-1})\wt{P_\eps}(\xi,t).
\]
Arguing as in proof of Proposition \ref{prop_basic_2} and making use of Proposition \ref{prop_dom} we obtain that there exists a moderate net $(c''_\eps)_\eps$ such that
\[
\wt{Q_\eps}(\xi,t)\le c''_\eps t^{-1}\wt{P_\eps}(\xi,t).
\]
Indeed
\begin{multline*}
\wt{Q_\eps}(\xi,t)\le C\sup_{|\eta|<t}|Q_\eps(\xi+\eta)|\le Cc'_\eps\lambda^{-1}_\eps t^{-1}\sup_{|\eta|<t}\wt{P_\eps}(\xi+\eta,t)\le C^2c'_\eps\lambda^{-1}_\eps t^{-1}\sup_{|\eta|<t}\sup_{|\theta|<t}|P_\eps(\xi+\eta+\theta)|\\
\le C^2c'_\eps\lambda^{-1}_\eps t^{-1}\sup_{|\eta|<2t}|P_\eps(\xi+\eta)|\le C^3c'_\eps\lambda^{-1}_\eps t^{-1}\wt{P_\eps}(\xi,2t)\le C^3 2^m c'_\eps\lambda^{-1}_\eps t^{-1}\wt{P_\eps}(\xi,t).
\end{multline*}
This means that $Q(D)\prec\prec P(D)$.

$(ii)$ Let $Q(D)$ be a differential operator of order $m$ satisfying the condition $(ii)$. From the first statement we already know that $Q(D)-Q_{m}(D)\prec\prec P(D)$ and thus $Q(D)-Q_{m}(D)\prec P(D)$. It remains to prove that $P(D)$ is stronger than $Q_m(D)$. Writing $\wt{Q_{m,\eps}}^2$ as $|Q_{m,\eps}|^2+\sum_{\beta\neq 0}|Q^{(\beta)}_{m,\eps}|^2$, and since the second term has order $\le m-1$, we have
\begin{multline*}
\wt{Q_{m,\eps}}^2(\xi)\le \lambda_\eps^2|P_{m,\eps}(\xi)|^2+\lambda_{1,\eps}\wt{P_{\eps}}^2(\xi)\le 2\lambda_\eps^2|P_{\eps}(\xi)|^2+2\lambda_\eps^2|(P_{\eps}-P_{m,\eps})(\xi)|^2+\lambda_{1,\eps}\wt{P_{\eps}}^2(\xi)\\
\le(2\lambda_\eps^2+\lambda_{2,\eps}+\lambda_{1,\eps})\wt{P_\eps}^2(\xi).
\end{multline*}
Hence $Q_m(D)\prec P(D)$.
%Now if $Q(D)$ is a differential operator with coefficients in $\wt{\C}$ of order $m'\le m-1$, \eqref{est_Hoer} yields
%\[
%\wt{Q_\eps}(\xi)\le \wt{Q_\eps}(0)(1+C|\xi|)^{m'}.
%\]
%Hence,
%\[
%\wt{Q_\eps}(\xi)\le \wt{Q_\eps}(0)C'(1+|\xi|^{m-1})\le C'\wt{Q_\eps}(0)\omega_\eps\,\wt{P_\eps}(\xi),
%\]
%where $(C'\wt{Q_\eps}(0)\omega_\eps)_\eps$ is moderate. This means that $Q(D)\prec P(D)$.
\end{proof}
The following proposition determines a family of equally strong operators.
\begin{proposition}
\label{prop_eq_strong}
Let $P(D)$ and $Q(D)$ be differential operators with coefficients in $\wt{\C}$. If $Q(D)\prec\prec P(D)$ then $P(D)\prec P(D)+aQ(D)\prec P(D)$ for all $a\in\wt{\C}$.
\end{proposition}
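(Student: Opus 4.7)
The statement has two halves, and the easy half is $P(D)+aQ(D)\prec P(D)$. Indeed, $Q(D)\prec\prec P(D)$ immediately implies $Q(D)\prec P(D)$ (just set $t=1$ in Definition \ref{def_domin}), and trivially $P(D)\prec P(D)$; so Proposition \ref{prop_basic_1}(i) yields $P(D)+aQ(D)\prec P(D)$ for every $a\in\wt{\C}$.

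The interesting direction is $P(D)\prec P(D)+aQ(D)$. My plan is to work at the level of representatives, with $(P_\eps)_\eps$, $(Q_\eps)_\eps$, $(\lambda_\eps)_\eps$ and $C(t)$ as in Definition \ref{def_domin}, and $(a_\eps)_\eps$ a moderate representative of $a$. From the triangle inequality in the $\ell^2$-norm that defines $\wt{\cdot}(\xi,t)$ one gets
\[
\wt{P_\eps}(\xi,t)\le \wt{P_\eps+aQ_\eps}(\xi,t)+|a_\eps|\wt{Q_\eps}(\xi,t)\le \wt{P_\eps+aQ_\eps}(\xi,t)+|a_\eps|\lambda_\eps C(t)\wt{P_\eps}(\xi,t)
\]
for all $\xi\in\R^n$, all $\eps\in(0,1]$ and all $t\ge 1$. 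The idea is now to pick a moderate net $t_\eps\ge 1$ so large that $|a_\eps|\lambda_\eps C(t_\eps)\le 1/2$, absorb the last term on the left, and then translate the resulting estimate from $t=t_\eps$ back to $t=1$.

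For the choice of $t_\eps$: since $(|a_\eps|\lambda_\eps)_\eps$ is moderate, there exist $c>0$ and $N\in\N$ with $|a_\eps|\lambda_\eps\le c\eps^{-N}$ for small $\eps$. By the property of $C(t)$ in Definition \ref{def_domin}, choosing $a=N+1$ we obtain some $b\in\R$ (which we may take sufficiently negative so that $\eps^b\ge 1$ on $(0,1]$) such that $C(t)\le\eps^{N+1}$ whenever $t\ge \eps^b$. Setting $t_\eps:=\eps^b$ yields $|a_\eps|\lambda_\eps C(t_\eps)\le c\eps$, which is $\le 1/2$ for all $\eps\in(0,\eps_0]$ with $\eps_0$ small; the remaining range of $\eps$ can be absorbed into the moderate constant. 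Consequently
\[
\tfrac{1}{2}\wt{P_\eps}(\xi,t_\eps)\le \wt{P_\eps+aQ_\eps}(\xi,t_\eps).
\]

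To return to $t=1$, I use two elementary bounds valid for any polynomial $R_\eps$ of degree $\le m$ and any $t\ge 1$: $\wt{R_\eps}(\xi,1)\le \wt{R_\eps}(\xi,t)$ (since $\wt{R_\eps}(\xi,t)$ is nondecreasing in $t\ge 1$) and $\wt{R_\eps}(\xi,t)\le t^m\wt{R_\eps}(\xi,1)$ (by factoring out $t^{2|\alpha|}\le t^{2m}$). Applying the first to $P_\eps$ and the second to $P_\eps+aQ_\eps$, we conclude
\[
\wt{P_\eps}(\xi)\le \wt{P_\eps}(\xi,t_\eps)\le 2\wt{P_\eps+aQ_\eps}(\xi,t_\eps)\le 2t_\eps^m\,\wt{P_\eps+aQ_\eps}(\xi),
\]
with $(2t_\eps^m)_\eps=(2\eps^{bm})_\eps$ moderate. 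This is precisely $P(D)\prec P(D)+aQ(D)$ and finishes the proof.

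The only delicate point is the selection of the moderate $t_\eps$ that makes $|a_\eps|\lambda_\eps C(t_\eps)\le 1/2$; this is exactly where the quantitative ``$C(t)\to 0$ fast enough against negative powers of $\eps$'' clause in Definition \ref{def_domin} is essential, and it is the reason $\prec\prec$ (not merely $\prec$) is needed for the strengthening direction.
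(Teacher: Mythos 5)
Your proposal is correct and follows essentially the same route as the paper: the easy direction via Proposition \ref{prop_basic_1}, and for $P(D)\prec P(D)+aQ(D)$ an absorption argument in which the quantitative decay of $C(t)$ beats the moderateness of $|a_\eps|\lambda_\eps$ at a moderate scale $t_\eps\ge 1$, followed by the rescaling $\wt{P_\eps}(\xi)\le\wt{P_\eps}(\xi,t_\eps)$ and $\wt{R_\eps}(\xi,t_\eps)\le t_\eps^m\wt{R_\eps}(\xi)$. The only cosmetic difference is that you absorb via the triangle inequality for the $\ell^2$-norm where the paper uses the squared inequality $\wt{P_\eps}^2\le 2\wt{R_\eps}^2+2|a_\eps|^2\wt{Q_\eps}^2$; both are equally valid.
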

\begin{proof}
Since $Q(D)\prec\prec P(D)$ implies $Q(D)\prec P(D)$, from the third assertion of Proposition \ref{prop_basic_1} we have that $P(D)+aQ(D)\prec P(D)$ for all $a\in\wt{\C}$. We now fix $a\in\wt{\C}$ and take $R(D)=P(D)+aQ(D)$. Arguing at the level of representatives we obtain
\[
\wt{P_\eps}^2(\xi,t)=\sum_\alpha |R_\eps^{(\alpha)}-a_\eps Q_\eps^{(\alpha)}|^2(\xi)t^{2|\alpha|}\le 2\wt{R_\eps}^2(\xi,t)+2|a_\eps|^2\wt{Q_\eps}^2(\xi,t)\le 2\wt{R_\eps}^2(\xi,t)+2|a_\eps|^2\lambda_\eps^2 C^2(t)\wt{P_\eps}^2(\xi,t).
\]
By the moderateness assumption we have that $|a_\eps|^2\lambda_\eps^2\le \eps^{2a}$ for all $\eps$ small enough. Choosing $b\in\R$ such that $C(t)\le \eps^{-a+1}$ for $t\ge\eps^b$ we can write the inequality
\[
\wt{P_\eps}^2(\xi,t)\le  2\wt{R_\eps}^2(\xi,t)+2\eps^{2a}\eps^{-2a+2}\wt{P_\eps}^2(\xi,t)
\]
for $t\ge\eps^b$, for $\eps$ small enough and for all $\xi$. It follows that 
\[
\wt{P_\eps}^2(\xi,t)\le  2\wt{R_\eps}^2(\xi,t)+\frac{1}{2}\wt{P_\eps}^2(\xi,t)
\]
for $t\ge\max(1,\eps^b)$ and $\eps\in(0,\eps_0]$ with $\eps_0\le 2^{-1}$. Hence,
\[
\wt{P_\eps}(\xi,t)\le 2\wt{R_\eps}(\xi,t),
\] 
under the same conditions on $\eps$ and $t$. Let $(t_\eps)_\eps\in\EM$ with $t_\eps\ge \max(1,\eps^b)$. We can write
\[
\wt{P_\eps}(\xi)\le \wt{P_\eps}(\xi,t_\eps)\le 2\wt{R_\eps}(\xi,t_\eps)\le 2t_\eps^m\wt{R_\eps}(\xi)\le \lambda_\eps\wt{R_\eps}(\xi),
\]
valid for some moderate net $(\lambda_\eps)_\eps$ and for $\eps\in(0,\eps_0]$. This means that $P(D)\prec R(D)=P(D)+aQ(D)$.
\end{proof}
The next corollary is straightforward from $P^{(\alpha)}(D)\prec\prec P(D)$.
\begin{corollary}
\label{corol_eq_strong}
For all $\alpha\in\N^n$,
\[
P(D)\prec P(D)+P^{(\alpha)}(D)\prec P(D).
\]
\end{corollary}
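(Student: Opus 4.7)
The plan is to deduce the corollary directly from Proposition \ref{prop_eq_strong} together with the observation already recorded right after Definition \ref{def_domin}, namely that $P^{(\alpha)}(D) \prec\prec P(D)$ for every $\alpha \neq 0$, which follows from the estimate $\wt{P^{(\alpha)}_\eps}(\xi,t) \le t^{-|\alpha|}\wt{P_\eps}(\xi,t)$ with $C(t) = t^{-|\alpha|}$ (the function $C(t)$ clearly satisfies the decay requirement in Definition \ref{def_domin}).

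First I would dispose of the trivial case $\alpha = 0$: here $P^{(0)}(D) = P(D)$ and $P(D)+P(D) = 2P(D)$, so $\wt{(2P_\eps)}(\xi) = 2\wt{P_\eps}(\xi)$ and both comparisons hold with constant moderate net $\lambda_\eps \equiv 2$ (and $\equiv 1/2$ in the other direction).

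For $\alpha \neq 0$, I would set $Q(D) := P^{(\alpha)}(D)$, invoke $Q(D) \prec\prec P(D)$ from the remark above, and then apply Proposition \ref{prop_eq_strong} with $a = 1 \in \wt{\C}$. That proposition gives exactly
\[
P(D) \prec P(D) + P^{(\alpha)}(D) \prec P(D),
\]
which is the desired conclusion. No further computation is required since both directions of the equivalence are packaged inside Proposition \ref{prop_eq_strong}.

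There is no real obstacle here, as advertised by the paper — all the analytic work (splitting $\wt{P_\eps}^2$ using $R_\eps - a_\eps Q_\eps$ and absorbing a small multiple of $\wt{P_\eps}^2$ on the right-hand side by exploiting the decay of $C(t)$ against the moderate growth of $|a_\eps|\lambda_\eps$) has already been carried out in the proof of Proposition \ref{prop_eq_strong}. The only thing to verify in the application is that the specific dominating function $C(t) = t^{-|\alpha|}$ associated with $P^{(\alpha)}(D) \prec\prec P(D)$ does satisfy $\forall a \in \R\ \exists b\in\R\ \forall t\ge\eps^b$, $C(t)\le\eps^a$, which is immediate by choosing $b = -a/|\alpha|$.
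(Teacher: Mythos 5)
Your proposal is correct and follows exactly the paper's intended route: the paper derives the corollary as an immediate consequence of Proposition \ref{prop_eq_strong} applied with $Q(D)=P^{(\alpha)}(D)$ and $a=1$, using the observation after Definition \ref{def_domin} that $\wt{P^{(\alpha)}_\eps}(\xi,t)\le t^{-|\alpha|}\wt{P_\eps}(\xi,t)$ gives $P^{(\alpha)}(D)\prec\prec P(D)$ for $\alpha\neq 0$. Your separate (trivial) treatment of the case $\alpha=0$, where domination fails but the conclusion is immediate, is a sensible added precaution that the paper leaves implicit.
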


\section{Parametrices and local solvability}
\label{sec_parametrix}
We begin our investigation of locally solvable differential operators in the Colombeau framework, by showing that differential operators which admits a generalized pseudodifferential parametrix (at least a right generalized parametrix) are locally solvable. Some needed notions of generalized pseudodifferential operator theory are collected in the following subsection.
\subsection{Preliminary notions of generalized pseudodifferential operator theory}
\paragraph{Symbols.}
Throughout the paper $S^m(\R^{2n})$ denotes the space of H\"ormander symbols fulfilling global estimates on $\R^{2n}$. In detail $|a|^{(m)}_{\alpha,\beta}$ is the seminorm
\[
\sup_{(x,\xi)\in\R^{2n}}\lara{\xi}^{-m+|\alpha|}|\partial^\alpha_\xi\partial^\beta_x a(x,\xi)|.
\]
In a local context, that is on an open subset $\Om$ of $\R^n$, we work with symbols that satisfy uniform estimates on compact subsets of $\Om$. In this case we use the notation $S^{m}(\Om\times\R^n)$ and the seminorms
\[
|a|^{(m)}_{K,\alpha,\beta}:=\sup_{x\in K\Subset\Om, \xi\in\R^n}\lara{\xi}^{-m+|\alpha|}|\partial^\alpha_\xi\partial^\beta_x a(x,\xi)|.
\]
The corresponding sets of generalized symbols are introduced by means of the abstract models $\G_E$ and $\G^\ssc_E$ introduced in Section \ref{sec_basic} where $E=S^m(\R^{2n})$ or $E=S^m(\Om\times\R^n)$. 

\paragraph{Mapping properties.}
A theory of generalized pseudodifferential operators has been developed in \cite{Garetto:04, GGO:03} for symbols in $\G_{{{S}}^m(\R^{2n})}$ and $\G^{\ssc}_{{{S}}^m(\R^{2n})}$ and for more elaborated notions of generalized symbols and amplitudes. We address the reader to the basic notions section of \cite{Garetto:ISAAC07} for an elementary introduction to the subject. In the sequel $\GS(\R^n)$ is the Colombeau space based on $E=\S(\R^n)$ and $\GSinf(\R^n)$ is the subspace of $\GS(\R^n)$ of those generalized functions $u$ having a representative $(u_\eps)_\eps$ fulfilling the following condition:
\[
\exists N\in\N\ \forall\alpha,\beta\in\N^n\qquad\qquad \sup_{x\in\R^n}|x^\alpha\partial^\beta u_\eps(x)|=O(\eps^{-N}).
\]
Finally $\LL(\GS(\R^n),\wt{\C})$ is the topological dual of $\GS(\R^n)$.

Let now $p\in\G_{{{S}}^m(\R^{2n})}$. The pseudodifferential operator
\[
p(x,D)u=\int_{\R^n}\esp^{ix\xi}p(x,\xi)\widehat{u}(\xi)\, \dslash\xi
\]
\begin{itemize}
\item[(i)] maps $\GS(\R^n)$ into $\GS(\R^n)$,
\item[(ii)] can be continuously extended to a $\wt{\C}$-linear map on $\LL(\GS(\R^n),\wt{\C})$,
\item[(iii)] maps basic functionals into basic functionals,
\item[(iv)] maps $\GSinf(\R^n)$ into itself if $p$ is of slow scale type.
\end{itemize}
\paragraph{Generalized symbols and asymptotic expansions}
The notion of asymptotic expansion for generalized symbols in $\G_{S^m(\R^{2n})}$ is based on the following definition at the level of representatives. 
\begin{definition}
\label{def_asymp}
Let $\{m_j\}_{j\in\mathbb{N}}$ be sequences of real numbers with
$m_j\searrow -\infty$, $m_0=m$. Let $\{(a_{j,\epsilon})_\epsilon\}_{j\in\mathbb{N}}$
be a sequence of elements $(a_{j,\epsilon})_\epsilon\in\mM_{S^{m_j}(\R^{2n})}$. 
We say that the formal series $\sum_{j=0}^\infty(a_{j,\epsilon})_\epsilon$ is the asymptotic expansion of
$(a_\epsilon)_\epsilon\in\mathcal{E}[\R^{2n}]$,
$(a_\epsilon)_\epsilon\sim\sum_j(a_{j,\epsilon})_\epsilon$ for short, iff for all $r\ge 1$
\[
\biggl(a_\epsilon-\sum_{j=0}^{r-1}a_{j,\epsilon}\biggr)_\epsilon\in \mM_{S^{m_r}(\R^{2n})}. 
\]
\end{definition}
By arguing as in \cite[Theorem 2.2]{Garetto:ISAAC07} one proves that there exists a net of symbols with a given asymptotic expansion according to Definition \ref{def_asymp}.
\begin{theorem}
\label{theo_asymp}
Let $\{(a_{j,\epsilon})_\epsilon\}_{j\in\mathbb{N}}$
be a sequence of elements $(a_{j,\epsilon})_\epsilon\in\mM_{S^{m_j}(\R^{2n}}$ with $m_j\searrow -\infty$ and $m_0=m$. Then, there exists $(a_\eps)_\eps\in\mM_{S^{m}(\R^{2n})}$ such that $(a_\epsilon)_\epsilon\sim\sum_j(a_{j,\epsilon})_\epsilon$. Moreover, if $(a'_\epsilon)_\epsilon\sim\sum_j(a_{j,\epsilon})_\epsilon$ then $(a_\eps-a'_\eps)_\eps\in\mM_{S^{-\infty}(\R^{2n})}$.
\end{theorem}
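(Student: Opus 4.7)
The plan is to adapt the classical Borel-type construction for asymptotic expansions of symbols (cf. \cite[Theorem 2.2]{Garetto:ISAAC07}) to the parametrized setting, the new ingredient being a careful choice of cutoff scales that depends on $\eps$ in a moderate way.

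Fix an excision function $\chi\in\Cinf(\R^n)$ with $\chi(\xi)=0$ for $|\xi|\le 1$ and $\chi(\xi)=1$ for $|\xi|\ge 2$. A routine Leibniz' rule computation, using that $\partial^\gamma\chi(\cdot/t)$ is supported in $\{|\xi|\ge t\}$ for $\gamma=0$ and in $\{t\le|\xi|\le 2t\}$ otherwise, yields for every $b\in S^{m_j}(\R^{2n})$, every $\alpha,\beta$ and every $t\ge 1$ the standard estimate
\[
\bigl|\chi(\cdot/t)\,b\bigr|^{(m_{j-1})}_{\alpha,\beta}\le C_{\alpha,\beta}\,t^{m_j-m_{j-1}}\sum_{\gamma\le\alpha}|b|^{(m_j)}_{\alpha-\gamma,\beta},
\]
with $C_{\alpha,\beta}$ independent of $t$ and $b$. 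Since $m_j-m_{j-1}<0$, the prefactor tends to $0$ as $t\to+\infty$. Because $(a_{j,\eps})_\eps\in\mM_{S^{m_j}(\R^{2n})}$ means that every seminorm $|a_{j,\eps}|^{(m_j)}_{\alpha,\beta}$ is a moderate net of positive reals, I can solve the above explicitly in $t$ and choose, for each $j\ge 1$, a moderate net $(t_{j,\eps})_\eps$ with $t_{j,\eps}\ge 1$ such that
\[
\bigl|\chi(\cdot/t_{j,\eps})\,a_{j,\eps}\bigr|^{(m_{j-1})}_{\alpha,\beta}\le 2^{-j}\qquad\text{for all }|\alpha|+|\beta|\le j\text{ and }\eps\in(0,1].
\]
Setting $t_{0,\eps}:=1$ I then define
\[
a_\eps(x,\xi):=a_{0,\eps}(x,\xi)+\sum_{j=1}^\infty\chi(\xi/t_{j,\eps})\,a_{j,\eps}(x,\xi),
\]
which for each fixed $\eps$ is a locally finite sum (since $t_{j,\eps}\to\infty$ as $j\to\infty$), so $a_\eps\in\Cinf(\R^{2n})$.

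Moderateness of $(a_\eps)_\eps$ in $S^m(\R^{2n})$ is verified by splitting, for any $\alpha,\beta$, at $j_0=|\alpha|+|\beta|$: the finite head lies in $\mM_{S^m}$ because each $a_{j,\eps}\in S^{m_j}\subseteq S^m$ with moderate seminorms, while the tail is controlled by $\sum_{j\ge j_0}2^{-j}$ in the $S^{m_{j-1}}\subseteq S^m$ seminorms, uniformly in $\eps$. For the asymptotic relation, fix $r\ge 1$ and decompose
\[
a_\eps-\sum_{j=0}^{r-1}a_{j,\eps}=\sum_{j=0}^{r-1}\bigl(\chi(\xi/t_{j,\eps})-1\bigr)a_{j,\eps}+\sum_{j=r}^\infty\chi(\xi/t_{j,\eps})\,a_{j,\eps}.
\]
The first (finite) sum is supported in $\{|\xi|\le 2\max_{j<r}t_{j,\eps}\}$, a moderate radius, hence lies in $\mM_{S^{-\infty}(\R^{2n})}\subseteq\mM_{S^{m_r}(\R^{2n})}$; the second sum belongs to $\mM_{S^{m_r}}$ by the same head/tail argument as above. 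Uniqueness is then immediate: if $(a'_\eps)_\eps$ also has the given expansion, then $(a_\eps-a'_\eps)_\eps\in\mM_{S^{m_r}}$ for every $r$, and since $m_r\searrow-\infty$ this is precisely $\mM_{S^{-\infty}(\R^{2n})}$.

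The hard part will be the moderate construction of the cutoff scales $(t_{j,\eps})_\eps$: this is where the parametrization by $\eps$ enters non-trivially, since a careless choice could easily destroy moderateness of the sum. Fortunately the required inequality takes the form $t_{j,\eps}^{m_{j-1}-m_j}\ge 2^j\cdot(\text{moderate in }\eps)$ with positive exponent $m_{j-1}-m_j$, so an explicit moderate choice is always available, and all the classical estimates then carry over uniformly in $\eps$.
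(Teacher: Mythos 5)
Your construction is correct and is essentially the argument the paper itself relies on: the paper gives no independent proof of Theorem \ref{theo_asymp} but simply invokes the Borel-type excision construction of \cite[Theorem 2.2]{Garetto:ISAAC07}, which is exactly your $\eps$-dependent cutoff scheme with moderately chosen radii $(t_{j,\eps})_\eps$. The only nit is that your stated conditions on $t_{j,\eps}$ (namely $t_{j,\eps}\ge 1$ and the $2^{-j}$ seminorm bound) do not by themselves force $t_{j,\eps}\to\infty$ as $j\to\infty$, so the ``locally finite sum'' justification should either be replaced by the observation that the $2^{-j}$ bounds already give convergence of the series in the Fr\'echet topology of $S^{m}(\R^{2n})$, or repaired by additionally imposing $t_{j,\eps}\ge j$.
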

We now take in consideration regular nets of symbols. Inspired by the notations of \cite{Garetto:04} we say that $(a_\eps)_\eps$ belongs to $\mM_{S^m(\R^{2n}),b}$ if and only if $|a_\eps|^{(m)}_{\alpha,\beta}=O(\eps^b)$ for all $\alpha$ and $\beta$. In other words we require the same kind of moderateness for all orders of derivatives. A closer look to the proof of Theorem 2.2 in \cite{Garetto:ISAAC07} yields the following corollary.
\begin{corollary}
\label{corol_asymp}
Let $\{(a_{j,\epsilon})_\epsilon\}_{j\in\mathbb{N}}$ as in Theorem \ref{theo_asymp}. If $(a_{j,\eps})_\eps\in\mM_{S^{m_j}(\R^{2n}),b}$ for each $j$ then there exists $(a_\eps)_\eps\in\mM_{S^{m}(\R^{2n}),b}$ such that 
\[
\biggl(a_\epsilon-\sum_{j=0}^{r-1}a_{j,\epsilon}\biggr)_\epsilon\in \mM_{S^{m_r}(\R^{2n}),b}.
\]
for every $r\ge 1$. This result is unique modulo $\mM_{S^{-\infty}(\R^{2n}),b}$.
\end{corollary}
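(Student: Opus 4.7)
The plan is to revisit the proof of Theorem~\ref{theo_asymp} (i.e.\ the Borel-type summation of \cite[Theorem 2.2]{Garetto:ISAAC07}) and check that the classical construction preserves the \emph{uniform} $\eps^b$-moderateness across all derivative orders. Fix a cutoff $\chi\in\Cinf(\R^n)$ with $\chi(\xi)=0$ for $|\xi|\le 1$ and $\chi(\xi)=1$ for $|\xi|\ge 2$, and set
\[
a_\eps(x,\xi):=\sum_{j=0}^{\infty}\chi(\xi/t_j)\,a_{j,\eps}(x,\xi),
\]
for an increasing sequence $(t_j)_j$ of reals $\ge 1$ with $t_j\to\infty$, to be chosen. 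The crucial point, in contrast to Theorem~\ref{theo_asymp}, is that here $(t_j)_j$ can be chosen \emph{independently of $\eps$}.

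A standard symbol-calculus computation yields constants $C_{\alpha,\beta}>0$, depending only on $\chi$, such that for every $r\le j$ and every $(\alpha,\beta)$,
\[
|\chi(\cdot/t_j)\,a_{j,\eps}|^{(m_r)}_{\alpha,\beta}\le C_{\alpha,\beta}\,t_j^{m_j-m_r}\max_{\gamma\le\alpha}|a_{j,\eps}|^{(m_j)}_{\alpha-\gamma,\beta},
\]
using that $\chi(\cdot/t_j)$ is supported in $\{|\xi|\ge t_j\}$, that $|\partial^\gamma\chi(\cdot/t_j)|\le c_\gamma t_j^{-|\gamma|}\mathbf{1}_{\{t_j\le|\xi|\le 2t_j\}}$, and that $m_j\le m_r$. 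By the uniform hypothesis, for every $j$ and every $(\alpha,\beta)$ there exist $c_{j,\alpha,\beta}>0$ and $\eta_{j,\alpha,\beta}\in(0,1]$, all \emph{independent} of $\eps$, with $|a_{j,\eps}|^{(m_j)}_{\alpha,\beta}\le c_{j,\alpha,\beta}\,\eps^b$ for $\eps\le\eta_{j,\alpha,\beta}$. Since $m_j-m_{j-1}<0$ and only finitely many multi-indices satisfy $|\alpha|+|\beta|\le j$, we can pick $t_j$ so large that
\[
C_{\alpha,\beta}\,t_j^{m_j-m_{j-1}}\,\max_{\gamma\le\alpha}c_{j,\alpha-\gamma,\beta}\le 2^{-j}\qquad\text{for all }|\alpha|+|\beta|\le j.
\]
This choice of $t_j$ does not depend on $\eps$, and yields $|\chi(\cdot/t_j)\,a_{j,\eps}|^{(m_r)}_{\alpha,\beta}\le 2^{-j}\eps^b$ whenever $r\le j-1$ and $|\alpha|+|\beta|\le j$, for all sufficiently small $\eps$.

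For every $r\ge 1$ we then split
\[
a_\eps-\sum_{j=0}^{r-1}a_{j,\eps}=\sum_{j=0}^{r-1}\bigl(\chi(\cdot/t_j)-1\bigr)a_{j,\eps}+\sum_{j\ge r}\chi(\cdot/t_j)\,a_{j,\eps}.
\]
The first sum is a finite sum of symbols with compact $\xi$-support, whose seminorms in any $S^{m}(\R^{2n})$ are $O(\eps^b)$ thanks to the uniform hypothesis on each $(a_{j,\eps})_\eps$. For the tail, each $(\alpha,\beta)$-seminorm in $S^{m_r}(\R^{2n})$ is bounded by the geometric series $\sum_{j\ge\max(r,|\alpha|+|\beta|)}2^{-j}\eps^b=O(\eps^b)$, up to a finite number of additional contributions that are directly handled by the hypothesis; hence the tail belongs to $\mM_{S^{m_r}(\R^{2n}),b}$. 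This gives the asymptotic expansion with the required uniform $b$-moderateness. Uniqueness is then immediate: if $(a_\eps)_\eps$ and $(a'_\eps)_\eps$ both satisfy the asymptotic relation, then $(a_\eps-a'_\eps)_\eps$ lies in $\mM_{S^{m_r}(\R^{2n}),b}$ for every $r$, hence in $\bigcap_r\mM_{S^{m_r}(\R^{2n}),b}=\mM_{S^{-\infty}(\R^{2n}),b}$.

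The only subtle point, and the main obstacle, is precisely the bookkeeping that makes the choice of $(t_j)_j$ independent of $\eps$: in the general setting of Theorem~\ref{theo_asymp} the exponent of $\eps$ in the seminorm bound may depend on $(\alpha,\beta)$, which forces $t_{j,\eps}\to\infty$ as $\eps\to 0$ and is exactly what prevents a uniform $\eps$-bound. The uniformity of the single exponent $b$ assumed here is what allows a purely $j$-dependent $t_j$ to absorb the growth of the constants $c_{j,\alpha,\beta}$ simultaneously for all $(\alpha,\beta)$, and so preserves the $\eps^b$-bound in every seminorm.
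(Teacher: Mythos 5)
Your overall strategy is the right one and is exactly what the paper alludes to: the paper gives no written proof of Corollary \ref{corol_asymp}, only the remark that one obtains it by inspecting the Borel-type summation behind Theorem \ref{theo_asymp} (i.e.\ \cite[Theorem 2.2]{Garetto:ISAAC07}), and your key estimate
$|\chi(\cdot/t_j)\,a_{j,\eps}|^{(m_r)}_{\alpha,\beta}\le C_{\alpha,\beta}\,t_j^{m_j-m_r}\max_{\gamma\le\alpha}|a_{j,\eps}|^{(m_j)}_{\alpha-\gamma,\beta}$ for $r\le j$ is correct, as is the uniqueness argument.

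There is, however, a genuine gap in the treatment of the tail. The hypothesis $(a_{j,\eps})_\eps\in\mM_{S^{m_j}(\R^{2n}),b}$ only gives $|a_{j,\eps}|^{(m_j)}_{\alpha,\beta}\le c_{j,\alpha,\beta}\,\eps^b$ for $\eps\le\eta_{j,\alpha,\beta}$, and nothing prevents $\eta_j:=\min_{|\alpha|+|\beta|\le j}\eta_{j,\alpha,\beta}$ from tending to $0$ as $j\to\infty$. Since your $t_j$ is chosen from the constants $c_{j,\alpha,\beta}$ alone, the bound $|\chi(\cdot/t_j)a_{j,\eps}|^{(m_r)}_{\alpha,\beta}\le 2^{-j}\eps^b$ is guaranteed only for $\eps\le\eta_j$; at any fixed $\eps_0>0$ all but finitely many terms of the series escape your control (the uncontrolled seminorms $|a_{j,\eps_0}|^{(m_j)}_{\alpha,\beta}$ for the $j$ with $\eta_j<\eps_0$ may grow arbitrarily in $j$, so $a_{\eps_0}$ need not even have finite $S^{m}$-seminorms). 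Hence the step ``the tail is bounded by $\sum_{j\ge J}2^{-j}\eps^b$'' does not follow: you have secured uniformity of the exponent $b$ over $(\alpha,\beta)$, but not uniformity over $j$ of the $\eps$-interval on which the bounds hold. Two standard repairs: (a) first replace $a_{j,\eps}$ by $0$ for $\eps>\eta_j$; each modified net differs from the original by a net vanishing for all small $\eps$, hence lying in $\mM_{S^{-\infty}(\R^{2n}),b'}$ for every $b'$, so the statement is unaffected, and afterwards your bounds hold on all of $(0,1]$ and the argument closes; or (b) let $t_{j,\eps}$ depend on $\eps$, e.g.
$t_{j,\eps}\ge\bigl(2^{j}C_j\,\eps^{-b}\max_{|\alpha|+|\beta|\le j}|a_{j,\eps}|^{(m_j)}_{\alpha,\beta}\bigr)^{1/(m_{j-1}-m_j)}$,
which makes the $2^{-j}\eps^b$ bound valid for every $\eps$; the uniform exponent $b$ then guarantees that $t_{j,\eps}$ remains bounded as $\eps\to0$, which is precisely what keeps the finitely many head terms $(\chi(\cdot/t_{j,\eps})-1)a_{j,\eps}$ in $\mM_{S^{m_r}(\R^{2n}),b}$.
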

Note that this statement recalls the first concept of asymptotic expansion for nets of symbols studied in \cite{Garetto:04} but avoids global estimates on the $\eps$-interval $(0,1]$.

It is clear that when $p\in\G_{S^m(\R^{2n})}$ has a representative in $\cup_{b\in\R}\mM_{S^m(\R^{2n}),b}$ then $p(x,D)$ maps $\GSinf(\R^n)$ into $\GSinf(\R^n)$.
\paragraph{Kernels and regularizing operators.}
Any generalized pseudodifferential operator has a kernel in $\LL(\GS(\R^{2n}),\wt{\C})$ but when $p\in\G_{{{S}}^{-\infty}(\R^{2n})}$ then $k_p\in\Gt(\R^{2n})$ with a representative $(k_{p,\eps})_\eps$ fulfilling the following property: 
\begin{multline}
\label{prop_kern}
\forall\alpha,\beta\in\N^n\, \forall d\in \N\, \exists (\lambda_\eps)_\eps\in\EM\, \forall\eps\in(0,1]\\ \sup_{(x,y)\in\R^{2n}}\lara{x}^{-d}\lara{y}^d|\partial^{\alpha}_x\partial^{\beta}_y k_{p,\eps}(x,y)|\le\lambda_\eps,\\
\sup_{(x,y)\in\R^{2n}}\lara{x}^{d}\lara{y}^{-d}|\partial^{\alpha}_x\partial^{\beta}_y k_{p,\eps}(x,y)|\le\lambda_\eps.
\end{multline}
If $p\in\G^\ssc_{{{S}}^{-\infty}(\R^{2n})}$ then $(\lambda_\eps)_\eps$ in \eqref{prop_kern} is a slow scale net. With a symbol of order $-\infty$ the pseudodifferential operator $p(x,D)$ can be written in the form 
\[
p(x,D)u=\int_{\R^n}k_p(x,y)u(y)\, dy.
\]
It maps $\LL(\GS(\R^n),\wt{\C})$ into $\Gt(\R^n)$ and $\LL(\G(\R^n),\wt{\C})$ into $\GS(\R^n)$. If $p$ is of slow scale type then the previous mappings have image in $\Gtinf(\R^n)$ and $\GSinf(\R^n)$ respectively.

\paragraph{$L^2$-continuity.}
We finally discuss some $L^2$-continuity. From the well-known estimate (see \cite[Chapter2, Theorem 4.1]{Kumano-go:81})
\[
\Vert a(x,D)u\Vert_2\le C_0\max_{|\alpha+\beta|\le l_0}|a|^{(0)}_{\alpha,\beta}\Vert u\Vert_2,\qquad\qquad\qquad \text{for}\ u\in\S(\R^n)
\]
valid for $a\in S^0(\R^n)$, for some $l_0>0$ and for a constant $C_0$ depending on the space dimension $n$, one easily has that a generalized pseudodifferential operator $p(x,D)$ with symbol $p\in\G_{{{S}}^0(\R^{2n})}$ maps $\G_{L^2(\R^n)}$ continuously into itself. If we now consider a basic functional $T$ of $\LL(\GS(\R^n),\wt{\C})$ given by a net $(T_\eps)_\eps\in\mM_{L^2(\R^n)}$, we have that $p(x,D)T$ is a basic functional in $\LL(\GS(\R^n),\wt{\C})$ with the same ${L^2}$-structure. We introduce the notation $\LL_2(\GS(\R^n),\wt{\C})$ for the set of basic functionals in $\LL(\GS(\R^n),\wt{\C})$ with a representative in $\mM_{L^2(\R^n)}$. Hence, a pseudodifferential operator with symbol in $p\in\G_{{{S}}^0(\R^{2n})}$ has the mapping property
\beq
\label{L_2_cont}
p(x,D):\LL_2(\GS(\R^n),\wt{\C})\to \LL_2(\GS(\R^n),\wt{\C}).
\eeq
Analogously, in the dual $\LL(\Gc(\R^n),\wt{\C})$ one can define the subset $\LL_{2,{\rm{loc}}}(\Gc(\R^n),\wt{\C})$ of those basic functionals $T$ defined by a net $(T_\eps)_\eps\in\mM_{L^2_{\rm{loc}}(\R^n)}$, i.e. $(\phi T_\eps)_\eps\in\mM_{L^2(\R^n)}$ for all $\phi\in\Cinfc(\R^n)$.

If $P(x,D)$ is a differential operator with Colombeau coefficients such that $P(x,\xi)\in\G_{S^{m}(\R^{2n})}$ then in addition to the mapping properties as a pseudodifferential operator we have that the restriction to any open subset $\Om$ maps $\Gc(\Om)$, $\G(\Om)$, $\LL(\G(\Om),\wt{\C})$ and $\LL(\Gc(\Om),\wt{\C})$ into themselves respectively. Typical example is obtained by taking the coefficients $c_\alpha$ of $P(x,D)=\sum_{|\alpha|\le m}c_\alpha(x)D^\alpha$ in the algebra $\G_E$ with $E={\cap_s W^{s,\infty}(\R^n)}$. In this case one can use the notation $\G_\infty(\R^n)$ for simplicity.
\subsection{A first sufficient condition of local solvability}
\begin{theorem}
\label{theo_sol_par}
Let $P(x,D)=\sum_{|\alpha|\le m}c_\alpha(x)D^\alpha$ be a differential operator with coefficients $c_\alpha\in\G_\infty(\R^n)$. Let $(P_\eps)_\eps$ a representative of $P$. If 
\begin{itemize}
\item[(i)] there exist $(q_\eps)_\eps\in\mM_{S^{m'}(\R^{2n})}$ with $m'\le 0$ and $(r)_\eps\in\mM_{S^{-\infty}(\R^{2n})}$ such that
\[
P_\eps(x,D)q_\eps(x,D)=I+r_\eps(x,D)
\]
on $\S(\R^n)$ for all $\eps\in(0,1]$,
\item[(ii)] there exists $l<-n$ such that
\[
|r_\eps|^{(l)}_{0,0}=O(1),
\]
\end{itemize}
then for all $x_0\in\R^n$ there exists a neighborhood $\Om$ of $x_0$ and a cut-off function $\phi$, identically $1$ near $x_0$, such that the following solvability result holds:
\beq
\label{first_solv}
\forall F\in\LL_{2,{\rm{loc}}}(\Gc(\R^n),\wt{\C})\ \exists T\in\LL(\Gc(\Om),\wt{\C})\qquad\quad P(x,D)T=\phi F\qquad \text{on $\Om$}.
\eeq
\end{theorem}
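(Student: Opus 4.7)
The plan is to use the right parametrix from (i) to reduce the equation $P(x,D) T = \phi F$ to inverting the residual $I + r_\eps(x,D)$, and to exploit hypothesis (ii) to render this inversion a uniform-in-$\eps$ contraction on a small ball. Fix $x_0 \in \R^n$ and pick $\phi \in \Cinfc(\R^n)$ with $\supp\phi \subseteq \overline{B(x_0,\delta)}$ and $\phi \equiv 1$ on $B(x_0,\delta/2)$, where $\delta>0$ will be chosen below, and set $\Om := B(x_0,\delta/2)$. Given a representative $(F_\eps)_\eps \in \mM_{L^2_{\mathrm{loc}}(\R^n)}$ of $F$, one looks for $u_\eps \in L^2(\R^n)$ with $\supp u_\eps \subseteq \supp\phi$ satisfying
\begin{equation*}
u_\eps + \phi\, r_\eps(x,D)\, u_\eps \;=\; \phi F_\eps,
\end{equation*}
and then defines $T_\eps := q_\eps(x,D) u_\eps$. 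Identity (i) applied to $u_\eps$ gives $P_\eps T_\eps = (I + r_\eps(x,D))u_\eps = \phi F_\eps + (1-\phi)\, r_\eps(x,D) u_\eps$, which agrees with $\phi F_\eps$ on $\Om \subseteq \{\phi = 1\}$, as required.

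The decisive technical input is a uniform-in-$\eps$ contraction estimate for $\phi\, r_\eps(x,D)$ on $L^2$-functions supported in $\supp\phi$. Since $l<-n$, hypothesis (ii) forces a uniform bound on the Schwartz kernel of $r_\eps(x,D)$,
\begin{equation*}
\bigl|K_\eps(x,y)\bigr| \;\le\; (2\pi)^{-n}\, |r_\eps|^{(l)}_{0,0}\!\int_{\R^n}\!\lara{\xi}^{l}\, d\xi \;\le\; M,
\end{equation*}
with $M$ independent of $\eps$. For $u$ supported in $\supp\phi$, Cauchy--Schwarz in the kernel integral yields $\|\phi\, r_\eps(x,D) u\|_{L^2} \le M\, |\supp\phi|\, \|u\|_{L^2}$. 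Shrinking $\delta$ so that $M\, |\supp\phi| \le 1/2$ --- a choice depending only on $M$ and $n$, not on $\eps$ --- the Neumann series realises $(I + \phi\, r_\eps(x,D))^{-1}$ as a bounded operator of norm at most $2$ on the closed subspace $\{u \in L^2(\R^n) : \supp u \subseteq \supp\phi\}$, uniformly in $\eps$.

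Setting $u_\eps := (I + \phi\, r_\eps(x,D))^{-1}(\phi F_\eps)$, the bound $\|u_\eps\|_{L^2} \le 2\|\phi F_\eps\|_{L^2}$ makes $(u_\eps)_\eps$ $L^2(\R^n)$-moderate, since $\phi$ is compactly supported and $F$ lies in $\LL_{2,{\rm{loc}}}$. Because $q_\eps \in \mM_{S^{m'}(\R^{2n})}$ with $m' \le 0$ and hence $q_\eps \in \mM_{S^{0}(\R^{2n})}$, the Calder\'on--Vaillancourt type estimate recalled before \eqref{L_2_cont} gives $q_\eps(x,D) : L^2 \to L^2$ bounded with operator norm controlled by finitely many symbol seminorms of $q_\eps$, hence moderate; thus $(T_\eps)_\eps \in \mM_{L^2(\R^n)}$ and the net defines a basic functional in $\LL(\Gc(\Om),\wt{\C})$ by integration against test elements of $\Gc(\Om)$. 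The assignment $F \mapsto T$ respects negligibility, for an $L^2_{\mathrm{loc}}$-negligible perturbation of $F$ produces, via the uniformly bounded linear operations, an $L^2(\R^n)$-negligible perturbation of $T_\eps$, which vanishes in duality against any moderate test net.

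The main obstacle is securing the uniformity in $\eps$ of the contraction estimate: hypothesis (ii) is precisely what prevents $|r_\eps|^{(l)}_{0,0}$ from growing like $\eps^{-N}$, in which case no single $\delta$ could absorb the residual simultaneously for all $\eps$; the bare existence of a right parametrix in (i), stated $\eps$-wise, is not enough. A subsidiary point is that the identity in (i) is formulated on $\S(\R^n)$, so one must extend it by $L^2$-density and continuity of the relevant mapping properties in order to apply it to $u_\eps \in L^2(\R^n)$; this is routine given the pseudodifferential calculus recalled in the preliminary subsection.
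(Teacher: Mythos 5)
Your proposal is correct and follows essentially the same route as the paper: hypothesis (ii) gives a uniform bound on the kernel of $r_\eps(x,D)$, which yields a uniform-in-$\eps$ $L^2$ contraction once the cut-off is supported in a small enough ball, the residual is inverted by a Neumann series, and the solution is obtained by applying $q_\eps(x,D)$ (whose $L^2$-boundedness with moderate norm follows from $m'\le 0$) to the fixed point. The only difference is cosmetic --- you place the cut-off outside $r_\eps(x,D)$ and work on the subspace of $L^2(\R^n)$ supported in $\supp\phi$, whereas the paper places it inside and works on $L^2(\Om)$ --- and your verification that $(1-\phi)r_\eps(x,D)u_\eps$ vanishes on $\Om$ correctly closes the argument.
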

\begin{proof}
We begin by dealing with the regularizing operator $r_\eps$. From $(ii)$ it follows that 
\[
r_\eps(x,D)u=\int_{\R^n}k_{r_\eps}(x,y)u(y)\, dy=\int_{\R^n}\int_{\R^n}\esp^{i(x-y)\xi}r_\eps(x,\xi)\, \dslash\xi\, u(y)\, dy\biggr)_\eps\biggr],\qquad\quad u\in\S(\R^n)
\]
with 
\[
k_{r_\eps}(x,y)=\int_{\R^n}\esp^{i(x-y)\xi}r_\eps(x,\xi)\, \dslash\xi
\]
and
\beq
\label{est_k_eps}
\sup_{x\in \R^n, y\in\R^n} |k_{r_\eps}(x,y)|=O(1).
\eeq
We now take a neighborhood $\Om$ of $x_0$ and a cut-off $\phi\in\Cinfc(\Om)$ and investigate the properties of the net of operators $r_\eps(x,D)\phi$ on $\Om$. For all $g\in L^2(\Om)$ we have that $\phi g\in L^2(\R^n)$ and therefore
\[
r_\eps(x,D)(\phi g)|_\Om=\biggl(\int_\Om k_{r_\eps}(x,y)\phi(y)g(y)\, dy\biggr){\big|_\Om}.
\]
This net of distributions actually belongs to $L^2(\Om)$. Indeed,
\[
\Vert r_\eps(x,D)(\phi g)|_\Om \Vert_{2}\le \int_\Om \biggl(\int_\Om |k_{r_\eps}(x,y)|^2\, dx\biggr)^{\frac{1}{2}}|\phi(y) g(y)|\, dy\le |\Om|\sup_{\Om\times\Om}|k_{r_\eps}(x,y)|\Vert \phi\Vert_2 \Vert g\Vert_2.
\]
From \eqref{est_k_eps} by choosing $\Om$ small enough and a suitable $\phi\in\Cinfc(\Om)$, we obtain that 
\[
\Vert r_\eps(x,D)(\phi g)|_\Om \Vert_{2}\le \frac{1}{2}\Vert g\Vert_2
\]
for all $g\in L^2(\Om)$ uniformly on an interval $(0,\eps_0]$. In other words the net of operators
\[
\wt{r}_{\eps}:L^2(\Om)\to L^2(\Om),\qquad \wt{r}_\eps(g)=r_\eps(x,D)(\phi g)|_\Om
\]
has operator norm $\le\frac{1}{2}$ for all $\eps\in(0,\eps_0]$. In the same way we define
\[
\wt{I}:L^2(\Om)\to L^2(\Om),\qquad \wt{I}(g)=\phi g
\]
and
\[
\wt{q}_\eps:L^2(\Om)\to L^2(\Om),\qquad \wt{q}_\eps(g)=q_\eps(x,D)(\phi g)|_\Om.
\]
This last mapping property follows from $q_\eps(x,D):L^2(\R^n)\to L^2(\R^n)$ valid because $m'\le 0$. One can choose $\phi$ such that $\Vert \wt{I}-I\Vert$ is very small and in particular $\Vert \wt{I}-I+\wt{r}_\eps\Vert <1$ uniformly on $(0,\eps_0]$. The series $\sum_{n=0}^\infty\Vert \wt{I}-I+\wt{r}_\eps\Vert^n$ is convergent. Hence, from Theorem 2 in \cite[Chapter 2]{Yosida:80} we have that 
%It follows that
%\[
%\Vert \wt{I}+\wt{r}_\eps\Vert=\Vert I+\wt{I}-I+\wt{r}_\eps\Vert \ge 1-\Vert \wt{I}-I+\wt{r}_\eps\Vert >0
%\]
%for all $\eps\in(0,\eps_0]$. 
$\wt{I}+\wt{r}_\eps$ has a continuous linear inverse on $L^2(\Om)$ for all $\eps\in(0,\eps_0]$ with operator norm uniformly bounded in $\eps$.

Let now $(F_\eps)_\eps$ be a net in $\mM_{L^2_{\rm{loc}}(\R^n)}$ representing $F\in\LL_{2,{\rm{loc}}}(\Gc(\R^n),\wt{\C})$. We have that $(\phi F_\eps)_\eps\in\mM_{L^2(\Om)}$ and we can define for $\eps\in(0,\eps_0]$ the net
\beq
\label{def_sol}
T_\eps:=\wt{q}_\eps(\wt{I}+\wt{r}_\eps)^{-1}(\phi F_\eps).
\eeq
$T_\eps$ belongs to $L^2(\Om)$ for all $\eps\in(0,\eps_0]$ and the properties of the operators involved in \eqref{def_sol} yield
\begin{multline*}
\Vert T_\eps\Vert_2=\Vert\wt{q}_\eps(\wt{I}+\wt{r}_\eps)^{-1}(\phi F_\eps)\Vert_2=\Vert q_\eps(x,D)(\phi(\wt{I}+\wt{r}_\eps)^{-1}(\phi F_\eps))|_\Om\Vert_2\\
\le c|q_\eps|^{(0)}_{l_0}\Vert\phi(\wt{I}+\wt{r}_\eps)^{-1}(\phi F_\eps)\Vert_2\le c'|q_\eps|^{(0)}_{l_0}\Vert(\wt{I}+\wt{r}_\eps)^{-1}\Vert\, \Vert\phi F_\eps\Vert_2.  
\end{multline*}
This means that, restricting $\eps$ on the interval $(0,\eps_0]$, the net $(T_\eps)_\eps$ is $L^2(\Om)$-moderate and therefore generates a basic functional $T$ in $\LL(\Gc(\Om),\wt{\C})$. $T$ solves the equation $P(x,D)T=\phi F$ on $\Om$. Indeed, working at the level of the representatives we have
\begin{multline*}
P_\eps(x,D)|_\Om(\wt{q}_\eps(\wt{I}+\wt{r}_\eps)^{-1}(\phi F_\eps))=P_\eps(x,D)|_\Om(q_\eps(x,D)\phi(\wt{I}+\wt{r}_\eps)^{-1}(\phi F_\eps))\\
=P_\eps(x,D)q_\eps(x,D)|_\Om(\phi(\wt{I}+\wt{r}_\eps)^{-1}(\phi F_\eps))=(\wt{I}+\wt{r}_\eps)(\wt{I}+\wt{r}_\eps)^{-1}(\phi F_\eps)=\phi F_\eps.
\end{multline*}
\end{proof}
\begin{remark}
\label{rem_set_up}
It is not restrictive to consider differential operators with coefficients in $\G_\infty(\R^n)$ when one wants to investigate local solvability in the Colombeau context. Indeed, if we assume to work on an open subset $\Om'$ and we take $P(x,D)=\sum_{|\alpha|\le m}c_\alpha(x)D^\alpha$ with $c_\alpha\in\G(\Om')$, by choosing the neighborhood $\Om$ of $x_0$ small enough the equation $P(x,D)T=\phi F$ on $\Om$ is equivalent to $P_1(x,D)T=\phi F$ with
\[
P_1(x,D)=\sum_{|\alpha|\le m}\varphi(x)c_\alpha(x)D^\alpha
\]
and $\varphi\in\Cinfc(\Om')$ identically $1$ on $\Om$. It follows that $\varphi c_\alpha\in\Gc(\Om')\subseteq\G_\infty(\R^n)$ and therefore we are in the mathematical set-up of Theorem \ref{theo_sol_par}.
\end{remark}
In the next proposition we find a family of differential operators which satisfy the hypotheses of Theorem \ref{theo_sol_par}, in other words a condition on the symbol which assures the existence of a parametrix $q$ with regularizing term $r$ as above. We go back to some definition of generalized hypoelliptic symbol introduced for pseudodifferential operators in \cite{Garetto:04}. Here the attention is focused not so much on the parametrix $q$ but on the required boundedness in $\eps$ of the regularizing operator $r$. This makes us to avoid some more general definitions of hypoelliptic symbol already employed in Colombeau theory, see \cite{Garetto:ISAAC07, GGO:03, GH:05}, which have less restrictive assumptions on the scales in $\eps$, guarantee the existence of a parametrix but not the desired behaviour of $r$.
\begin{proposition}
\label{prop_sol_hyp}
Let $P(x,D)=\sum_{|\alpha|\le m}c_\alpha(x)D^\alpha$ be a differential operator with coefficients $c_\alpha\in\G_\infty(\R^n)$. We assume that there exists $a,a'\in\R$, $a\le a'$, $0\le m'\le m$, $R>0$ and a representative $(P_\eps)_\eps$ of $P$ fulfilling the following conditions: 
\begin{itemize}
\item[(i)] $|P_\eps|^{(m)}_{\alpha,\beta}=O(\eps^{a})$ for all $\alpha,\beta\in\N^n$;
\item[(ii)] there exists $c>0$ such that 
$$|P_\eps(x,\xi)|\ge c\,\eps^{a'}\lara{\xi}^{m'}$$
for all $x\in\R^n$, for $|\xi|\ge R$ and for all $\eps\in(0,1]$;
\item[(iii)] for all $\alpha,\beta\in\N^n$ there exists $(c_{\alpha,\beta,\eps})_\eps$ with $c_{\alpha,\beta,\eps}=O(1)$ such that
\[
|\partial^\alpha_\xi\partial^\beta_x P_\eps(x,\xi)|\le c_{\alpha,\beta,\eps}|P_\eps(x,\xi)|\lara{\xi}^{-|\alpha|}
\]
for all $x\in\R^n$, for $|\xi|\ge R$ and for all $\eps\in(0,1]$.
\end{itemize}
Then there exists $(q_\eps)_\eps\in\mM_{S^{-{m'}}(\R^{2n}),-a'}$ and $(r_\eps)_\eps\in\mM_{S^{-\infty}(\R^{2n}),a-a'}$ such that 
\[
P_\eps(x,D)q_\eps(x,D)=I+r_\eps(x,D)
\]
for all $\eps\in(0,1]$. Moreover there exists $s_\eps(x,D)$ with $(s_\eps)_\eps\in\mM_{S^{-\infty}(\R^{2n}),2a-2a'}$ such that
\[
q_\eps(x,D)P_\eps(x,D)=I+s_\eps(x,D)
\]
for all $\eps\in(0,1]$.
\end{proposition}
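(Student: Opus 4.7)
The plan is to construct the right parametrix $q_\eps$ by the classical iterative construction for hypoelliptic symbols (as in \cite{Garetto:04}), at the level of representatives and with explicit control of the $\eps$-moderateness afforded by hypotheses $(i)$--$(iii)$, and then to deduce the left-parametrix property $q_\eps\# P_\eps=I+s_\eps$ by the standard two-sided reconciliation trick.

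Fix a cut-off $\chi\in\Cinf(\R^n)$ with $\chi(\xi)=0$ for $|\xi|\le R$ and $\chi(\xi)=1$ for $|\xi|\ge 2R$, and set $q_{0,\eps}(x,\xi):=\chi(\xi)/P_\eps(x,\xi)$. Hypothesis $(ii)$ bounds $|1/P_\eps|$ on $\supp\chi$ by $c^{-1}\eps^{-a'}\lara{\xi}^{-m'}$; iterating the quotient rule and using the uniform-in-$\eps$ quantities $\lara{\xi}^{|\gamma|}\partial^\gamma_\xi\partial^\delta_x P_\eps/P_\eps=O(1)$ supplied by $(iii)$, I would verify $|\partial^\alpha_\xi\partial^\beta_x q_{0,\eps}|\le C_{\alpha,\beta}\eps^{-a'}\lara{\xi}^{-m'-|\alpha|}$, so that $(q_{0,\eps})_\eps\in\mM_{S^{-m'}(\R^{2n}),-a'}$. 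Further correctors $q_{j,\eps}$ of order $-m'-j$ would be defined recursively, forcing $P_\eps\#\sum_{k\le j}q_{k,\eps}-1\in S^{m-m'-j-1}$ at each step; this exploits that, since $P_\eps$ is polynomial in $\xi$, the composition $P_\eps\# q=\sum_{|\alpha|\le m}\partial^\alpha_\xi P_\eps\,D^\alpha_x q/\alpha!$ is a \emph{finite} sum, and the hypoelliptic hypothesis $(iii)$ is what keeps the $\lara{\xi}$-orders strictly decreasing while also preserving the $\eps$-exponent $-a'$ at every level (every additional factor $1/P_\eps$ appearing in the recursion is introduced paired with a factor $\partial P_\eps$, yielding an $O(1)$ quotient). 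Corollary~\ref{corol_asymp} then assembles the sequence $\{q_{j,\eps}\}$ into a single $(q_\eps)_\eps\in\mM_{S^{-m'}(\R^{2n}),-a'}$ realising the expansion, and $r_\eps:=P_\eps\# q_\eps-1$ is of order $-\infty$ by construction; matching the scale $\eps^a$ of $P_\eps$ with $\eps^{-a'}$ of $q_\eps$, and absorbing the smoothing cut-off remainder $\chi-1\in\Cinfc$ into $O(1)\subset O(\eps^{a-a'})$ (legal since $a\le a'$), one concludes $(r_\eps)_\eps\in\mM_{S^{-\infty}(\R^{2n}),a-a'}$.

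The symmetric construction, performed with the order of composition reversed, produces a left parametrix $(q'_\eps)_\eps\in\mM_{S^{-m'}(\R^{2n}),-a'}$ with $q'_\eps\# P_\eps=1+s'_\eps$ and $(s'_\eps)_\eps\in\mM_{S^{-\infty}(\R^{2n}),a-a'}$. Associativity of $\#$ rewrites $q'_\eps\#(P_\eps\# q_\eps)=(q'_\eps\# P_\eps)\# q_\eps$ as $q'_\eps+q'_\eps\# r_\eps=q_\eps+s'_\eps\# q_\eps$, whence
\[
q_\eps-q'_\eps=q'_\eps\# r_\eps-s'_\eps\# q_\eps,
\]
a composition of a symbol of order $-m'$ at scale $\eps^{-a'}$ with one of order $-\infty$ at scale $\eps^{a-a'}$; hence $q_\eps-q'_\eps\in\mM_{S^{-\infty}(\R^{2n}),a-2a'}$. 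Composing once more with $P_\eps$ on the right adds a factor $\eps^a$, so
\[
s_\eps:=q_\eps\# P_\eps-1=s'_\eps+(q_\eps-q'_\eps)\# P_\eps\in\mM_{S^{-\infty}(\R^{2n}),2a-2a'},
\]
which is the claimed bound: the scale $\eps^{2a-2a'}$ dominates $\eps^{a-a'}$ precisely because $a\le a'$.

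The main technical obstacle is the invariance of the moderateness exponent $-a'$ through the whole symbolic iteration: a careless recursion compounds the factor $\eps^{-a'}$ with each step, producing a deteriorating exponent $\eps^{-a'+j(a-a')}$ that would be incompatible with the uniform $b=-a'$ required by Corollary~\ref{corol_asymp}. The algebraic crux is to display every $q_{j,\eps}$ as a \emph{single} factor $\chi/P_\eps$ multiplied by a polynomial expression in the dimensionless, $\eps$-uniformly bounded quantities $\lara{\xi}^{|\gamma|}\partial^\gamma_\xi\partial^\delta_x P_\eps/P_\eps$ of $(iii)$: only this single factor $\chi/P_\eps$ carries $\eps^{-a'}$, and it is never compounded with further copies of itself. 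Verifying this structural form of the correctors is the delicate point on which the entire result rests.
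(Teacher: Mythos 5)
Your proposal follows essentially the same route as the paper: the same first symbol $\chi(\xi)/P_\eps(x,\xi)$, the same recursion for the correctors $q_{j,\eps}$ with the $\eps$-exponent held at $-a'$ by pairing each new factor $1/P_\eps$ with a derivative of $P_\eps$ (precisely the paper's observation that $(q_{0,\eps}\partial^\alpha_\xi\partial^\beta_x P_\eps)_\eps\in\mM_{S^{-|\alpha|}(\R^{2n}),0}$), assembly via Corollary~\ref{corol_asymp}, and the identical two-sided reconciliation $q_\eps-q'_\eps=q'_\eps\sharp r_\eps-s'_\eps\sharp q_\eps$ yielding the exponent $2a-2a'$ for $s_\eps$. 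The argument is correct and matches the paper's proof in all essential steps.
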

\begin{proof}
Let $\psi$ be a smooth function in the variable $\xi$ such that $\psi(\xi)=0$ for $|\xi|\le R$ and $\psi(\xi)=1$ for $|\xi|\ge 2$. By adapting the proof of Proposition 8.1 and Theorem 8.1 in \cite{Garetto:04} to our situation one easily obtains from the hypotheses $(i)$ and $(ii)$ that 
\begin{itemize}
\item[-] $q_{0,\eps}:=\psi(\xi)P_\eps^{-1}(x,\xi)$ defines a net in $\mM_{S^{-m'}(\R^{2n}),-a'}$,
\item[-] $(q_{0,\eps}\partial^\alpha_\xi\partial^\beta_x P_\eps)_\eps\in \mM_{S^{-|\alpha|}(\R^{2n}),0}$ for all $\alpha,\beta\in\N^n$,
\item[-] for each $j\ge 1$, the net
\[
q_{j,\eps}:=-\biggl\{\sum_{|\gamma|+l=j,\, l<j}\frac{(-i)^{|\gamma|}}{\gamma !}\partial^\gamma_\xi P_\eps\partial^\gamma_x q_{l,\eps}\biggr\}q_{0,\eps}
\]
belongs to $\mM_{S^{-m'-j}(\R^{2n}),-a'}$.
\end{itemize}
Corollary \ref{corol_asymp} implies that there exists $(q_\eps)_\eps\in\mM_{S^{-m'}(\R^{2n}),-a'}$ having $\{(q_{j,\eps})_\eps\}_j$ as asymptotic expansion with fixed moderateness $\eps^{-a'}$. Let us now consider the composition $P_\eps(x,D)q_\eps(x,D)=\lambda_\eps(x,D)$. Basic properties of symbolic calculus show that 
\[
\biggl(\lambda_\eps-\sum_{|\gamma|<r}\frac{(-i)^{|\gamma|}}{\gamma !}\partial^\gamma_\xi P_\eps\partial^\gamma_x q_\eps\biggr)_\eps\in\mM_{S^{m-m'-r}(\R^{2n}),a-a'}
\]
for all $r\ge 1$. Making use of $(q_\eps-\sum_{l=0}^{r-1}q_{l,\eps})_\eps\in\mM_{S^{-m'-r}(\R^{2n}),-a'}$ we can write
\begin{multline*}
\sum_{|\gamma|<r}\frac{(-i)^{|\gamma|}}{\gamma !}\partial^\gamma_\xi P_\eps\partial^\gamma_x q_\eps = \sum_{|\gamma|<r}\sum_{l=0}^{r-1}\frac{(-i)^{|\gamma|}}{\gamma !}\partial^\gamma_\xi P_\eps\partial^\gamma_x q_{l,\eps} +s_\eps = P_\eps q_{0,\eps}+\sum_{j=1}^{r-1}P_\eps q_{j,\eps}\\
+\sum_{j=1}^{r-1}\sum_{|\gamma|+l=j,\, l<j}\frac{(-i)^{|\gamma|}}{\gamma !}\partial^\gamma_\xi P_\eps\partial^\gamma_x q_{l,\eps}+\sum_{|\gamma|+l\ge r,\, |\gamma|<r,\, l<r}\frac{(-i)^{|\gamma|}}{\gamma !}\partial^\gamma_\xi P_\eps\partial^\gamma_x q_{l,\eps}+s_\eps,
\end{multline*}
where $(s_\eps)_\eps\in\mM_{S^{m-m'-r}(\R^{2n}),a-a'}$. By definition of $q_{0,\eps}$ and $q_{j,\eps}$ we have that the right-hand side of the previous formula equals 
\[
1+\sum_{|\gamma|+l\ge r,\, |\gamma|<r,\, l<r}\frac{(-i)^{|\gamma|}}{\gamma !}\partial^\gamma_\xi P_\eps\partial^\gamma_x q_{l,\eps}+s_\eps
\] 
when $|\xi|\ge 2R$. Hence the net $(\lambda_\eps-1)_\eps$ belongs to $\mM_{S^{m-m'-r}(\R^{2n}),a-a'}$ for $|\xi|\ge 2R$. Since $q_{0,\eps}P_\eps(x,\xi)-1=\psi(\xi)-1\in\Cinfc(\R^n)$ the domain restriction can be dropped. Concluding, $(\lambda_\eps-1)_\eps:=(r_\eps)_\eps$ is an element of $\mM_{S^{-\infty}(\R^{2n}),a-a'}$. Analogously one can construct a net of symbols $(q'_\eps)_\eps\in\mM_{S^{-m'}(\R^{2n}),-a'}$ such that $q'_\eps(x,D)P_\eps(x,D)=I+r'_\eps(x,D)$ with $(r'_\eps)_\eps\in\mM_{S^{-\infty}(\R^{2n}),a-a'}$. By applying $q'_\eps(x,D)P_\eps(x,D)$ to $q_\eps(x,D)$ we get
\[
(I+r'_\eps(x,D))q_\eps(x,D)=q'_\eps(x,D)P_\eps(x,D)q_\eps(x,D)=q'_\eps(x,D)(I+r_\eps(x,D))
\]
which at the level of symbols means
\[
q_\eps+r'_\eps\sharp q_\eps=q'_\eps+q'_\eps\sharp r_\eps.
\]
Thus, $(q_\eps-q'_\eps)\in\mM_{S^{-\infty}(\R^{2n}),a-2a'}$ and since $a-a'\ge 2a-2a'$ the equality $q_\eps(x,D)P_\eps(x,D)=I+s_\eps(x,D)$ holds with $(s_\eps)_\eps\in\mM_{S^{-\infty}(\R^{2n}),2a-2a'}$. 
\end{proof}
A straightforward combination of Proposition \ref{prop_sol_hyp} with Theorem \ref{theo_sol_par} entails the following result of local solvability.
\begin{proposition}
\label{prop_sol_hyp_2}
Let $P(x,D)=\sum_{|\alpha|\le m}c_\alpha(x)D^\alpha$ be a differential operator with coefficients $c_\alpha\in\G_\infty(\R^n)$. Let $(P_\eps)_\eps$ be a representative of $P$ fulfilling the hypotheses of Proposition \ref{prop_sol_hyp} with $a=a'$. Then for all $x_0\in\R^n$ there exists a neighborhood $\Om$ of $x_0$ and a cut-off function $\phi$, identically $1$ near $x_0$, such that  
\[
\forall F\in\LL_{2,{\rm{loc}}}(\Gc(\R^n),\wt{\C})\ \exists T\in\LL(\Gc(\Om),\wt{\C})\qquad\qquad P(x,D)T=\phi F\qquad \text{on $\Om$}.
\]
\end{proposition}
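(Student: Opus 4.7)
The plan is to derive this proposition as a direct combination of Proposition \ref{prop_sol_hyp} and Theorem \ref{theo_sol_par}. The role of the equality $a=a'$ is simply to upgrade the moderateness bound on the regularizing net $r_\eps$ from $O(\eps^{a-a'})$ to $O(1)$, which is exactly what is needed to match hypothesis $(ii)$ of Theorem \ref{theo_sol_par}.

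First I would invoke Proposition \ref{prop_sol_hyp} with the given representative $(P_\eps)_\eps$ and the choice $a=a'$. This yields nets $(q_\eps)_\eps\in\mM_{S^{-m'}(\R^{2n}),-a'}$ and $(r_\eps)_\eps\in\mM_{S^{-\infty}(\R^{2n}),a-a'}=\mM_{S^{-\infty}(\R^{2n}),0}$ together with the identity
\[
P_\eps(x,D)q_\eps(x,D)=I+r_\eps(x,D)
\]
valid on $\S(\R^n)$ for every $\eps\in(0,1]$. Since $m'\ge 0$, the order of $q_\eps$ is $-m'\le 0$, so hypothesis $(i)$ of Theorem \ref{theo_sol_par} is verified (with $m'$ there playing the role of $-m'$ here).

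It remains to check hypothesis $(ii)$ of Theorem \ref{theo_sol_par}. By definition of the refined moderateness class, membership of $(r_\eps)_\eps$ in $\mM_{S^{-\infty}(\R^{2n}),0}$ means that every seminorm $|r_\eps|^{(l)}_{\alpha,\beta}$ is uniformly bounded in $\eps$ for every $l\in\R$ and $\alpha,\beta\in\N^n$. Choosing in particular any $l<-n$ (say $l=-n-1$) together with $\alpha=\beta=0$ gives $|r_\eps|^{(l)}_{0,0}=O(1)$, which is precisely what is required. Theorem \ref{theo_sol_par} then applies and furnishes, for each $x_0\in\R^n$, a neighborhood $\Om$ of $x_0$ and a cut-off $\phi\in\Cinfc(\Om)$ identically $1$ near $x_0$ such that the equation $P(x,D)T=\phi F$ on $\Om$ admits a solution $T\in\LL(\Gc(\Om),\wt{\C})$ for every $F\in\LL_{2,\rm{loc}}(\Gc(\R^n),\wt{\C})$.

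There is no genuine obstacle at this stage: the substantive technical work is already contained in Proposition \ref{prop_sol_hyp} (the symbolic construction of the parametrix with bounded moderateness) and in Theorem \ref{theo_sol_par} (the $L^2(\Om)$ Neumann-series argument that inverts $\wt{I}+\wt{r}_\eps$). The only point deserving emphasis is that the hypothesis $a=a'$ cannot be weakened here: if $a<a'$, the regularizing net only satisfies $|r_\eps|^{(l)}_{0,0}=O(\eps^{a-a'})$, which blows up as $\eps\to 0$ and destroys the uniform contractivity of $\wt{r}_\eps$ on $L^2(\Om)$ needed to invert $\wt{I}+\wt{r}_\eps$.
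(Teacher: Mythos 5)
Your proposal is correct and follows essentially the same route as the paper: invoke Proposition \ref{prop_sol_hyp} with $a=a'$ to obtain $(q_\eps)_\eps\in\mM_{S^{-m'}(\R^{2n}),-a}$ with $-m'\le 0$ and $(r_\eps)_\eps\in\mM_{S^{-\infty}(\R^{2n}),0}$, observe that the latter gives $|r_\eps|^{(l)}_{0,0}=O(1)$ for every $l$, and then apply Theorem \ref{theo_sol_par}. Your closing remark on why $a=a'$ cannot be weakened is a sensible addition but not part of the paper's argument.
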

\begin{proof}
If $a=a'$ from Proposition \ref{prop_sol_hyp} we have that there exists a parametrix $q_\eps(x,D)$ with $(q_\eps)_\eps\in\mM_{S^{-m'}(\R^{2n}),-a}$, $-m'\le 0$, and a regularizing operator $r_\eps(x,D)$ with $(r_\eps)_\eps\in\mM_{S^{-\infty}(\R^{2n}),0}$. This means that $|r_\eps|^{(l)}_{0,0}=O(1)$ for all $l\in\R$. The conditions under which Theorem \ref{theo_sol_par} holds are therefore fulfilled.
\end{proof}
\begin{example}
As an explanatory example we consider the operator generated by
\[
P_\eps(x,D)=-\eps^a\Delta+\sum_{|\alpha|\le 1}c_{\alpha,\eps}(x)D^\alpha,
\]
where $\Delta=\sum_{i=1}^n\frac{\partial^2}{\partial_{x_i^2}}$,
\[
c_{\alpha,\eps}=c_\alpha\ast\varphi_{\omega(\eps)},\qquad c_\alpha\in L^\infty(\R^n),
\]
$\varphi$ is a mollifier in $\S(\R^n)$ and $(\omega^{-1}(\eps))_\eps$ a slow scale net. It follows that $[(c_{\alpha,\eps})_\eps]$ belongs to $\G_\infty(\R^n)$ with
\[
\Vert\partial^\beta c_{\alpha,\eps}\Vert_\infty\le \Vert c_\alpha\Vert_\infty\, \omega(\eps)^{-|\beta|}\Vert\partial^\beta\varphi\Vert_1\le c\eps^{-b}
\]
for all $b>0$. For any $a<0$ this operator is locally solvable in the sense of Theorem \ref{theo_sol_par} because it fulfills the conditions $(i)$, $(ii)$, $(iii)$ of Proposition \ref{prop_sol_hyp} with $a=a'$.
\end{example}
Due to the existence of a generalized parametrix for the operator $P(x,D)$ of Proposition \ref{prop_sol_hyp_2}, the local solution inherits the regularity properties of the right hand-side. 
\begin{proposition}
\label{prop_sol_hyp_3}
Let $P(x,D)=\sum_{|\alpha|\le m}c_\alpha(x)D^\alpha$ be a differential operator with coefficients $c_\alpha\in\G_\infty(\R^n)$. Let $(P_\eps)_\eps$ be a representative of $P$ fulfilling the hypotheses of Proposition \ref{prop_sol_hyp} with $a=a'$. Then for all $x_0\in\R^n$ there exists a neighborhood $\Om$ of $x_0$ and a cut-off function $\phi$, identically $1$ near $x_0$, such that  
\[
\forall f\in\G(\R^n)\ \exists u\in\G(\Om)\qquad\qquad P(x,D)u=\phi f\qquad \text{on $\Om$},
\]
and
\[
\forall f\in\Ginf(\R^n)\ \exists u\in\Ginf(\Om)\qquad\qquad P(x,D)u=\phi f\qquad \text{on $\Om$}.
\]
\end{proposition}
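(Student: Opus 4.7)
The plan is to refine the $L^2(\Om)$-construction from the proof of Theorem \ref{theo_sol_par} and Proposition \ref{prop_sol_hyp_2}, exploiting crucially that when $a=a'$ the remainder produced by Proposition \ref{prop_sol_hyp} lies in $\mM_{S^{-\infty}(\R^{2n}),0}$, so its kernel $k_{r_\eps}$ is smooth and every weighted derivative is bounded \emph{uniformly} in $\eps$. I choose $\Om$ and $\phi\in\Cinfc(\R^n)$ with $\phi\equiv 1$ on $\overline{\Om}$ so that $I+\wt{r}_\eps:L^2(\Om)\to L^2(\Om)$, with $\wt{r}_\eps(g)=r_\eps(x,D)(\phi g)|_\Om$, is invertible and $\|(I+\wt{r}_\eps)^{-1}\|$ is bounded uniformly in $\eps$.

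Given $f=[(f_\eps)_\eps]\in\G(\R^n)$, set $h_\eps:=(I+\wt{r}_\eps)^{-1}(\phi f_\eps|_\Om)\in L^2(\Om)$. The first step is to promote $(h_\eps)_\eps$ to an element of $\G(\Om)$. Rewriting the defining equation pointwise as
\[
h_\eps(x)=\phi(x)f_\eps(x)-\int_\Om k_{r_\eps}(x,y)\phi(y)h_\eps(y)\,dy
\]
for $x\in\Om$, differentiating under the integral, and using the uniform $O(1)$ bounds on $\partial^\alpha_x k_{r_\eps}(x,\cdot)$ in $L^2$ together with the uniform estimate $\|h_\eps\|_{L^2(\Om)}\le C\|\phi f_\eps\|_{L^2}$, one reads off that every $\Cinf$-seminorm of $h_\eps$ on every compact $K\Subset\Om$ is moderate, inherited from the $\G(\R^n)$-moderateness of $f_\eps$.

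Next define $u_\eps:=q_\eps(x,D)(h_\eps^{\mathrm{ext}})$, where $h_\eps^{\mathrm{ext}}$ is the extension of $h_\eps$ by zero outside $\Om$ (since $\phi\equiv 1$ on $\overline{\Om}$, one has $\phi h_\eps^{\mathrm{ext}}=h_\eps^{\mathrm{ext}}$ in $L^2(\R^n)$). The verification that $P(x,D)u=\phi f$ on $\Om$ is immediate at the level of representatives:
\[
P_\eps u_\eps|_\Om=(I+r_\eps)(h_\eps^{\mathrm{ext}})|_\Om=h_\eps+\wt{r}_\eps h_\eps=(I+\wt{r}_\eps)h_\eps=\phi f_\eps|_\Om.
\]
To check that $u_\eps$ is smooth and moderate on each $K\Subset\Om$, I fix $\chi\in\Cinfc(\Om)$ identically $1$ on a neighborhood of $K$ and decompose $h_\eps^{\mathrm{ext}}=\chi h_\eps+(1-\chi)h_\eps^{\mathrm{ext}}$. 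The first summand is in $\Cinfc(\R^n)$ with $\G$-moderate $\Cinf$-norms by the previous step, so $q_\eps(\chi h_\eps)$ is smooth and moderate by the symbolic calculus of generalized pseudodifferential operators. For the second summand, $(1-\chi)h_\eps^{\mathrm{ext}}$ vanishes on a neighborhood of $K$, so for $x\in K$ the quantity $q_\eps((1-\chi)h_\eps^{\mathrm{ext}})(x)$ is an integral of $k_{q_\eps}(x,y)$ over $y$ bounded away from the diagonal, where $k_{q_\eps}$ is smooth with all derivatives of order $O(\eps^{-a})$ and rapid decay; combined with the $L^2$ bound on $h_\eps$ this produces $\G$-moderate estimates on $K$. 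Hence $u\in\G(\Om)$.

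For the $\Ginf$-assertion, both steps above preserve uniformity in the order of differentiation: if $f\in\Ginf(\R^n)$, all derivatives of $\phi f_\eps$ satisfy the same moderateness bound $O(\eps^{-N})$, the uniform $O(1)$ bounds on $k_{r_\eps}$ transfer that $N$ to every derivative of $h_\eps$, and the uniform-in-$(\alpha,\beta)$ moderateness $O(\eps^{-a})$ of the symbol seminorms of $q_\eps$ (Proposition \ref{prop_sol_hyp}) propagates it further to $u_\eps$. The main technical obstacle is the jump of $h_\eps^{\mathrm{ext}}$ across $\partial\Om$: it is precisely the pseudo-locality of $q_\eps$ together with the off-diagonal decay of $k_{q_\eps}$, exploited through the partition-of-unity splitting, that recovers interior smoothness and $\G$- (resp.\ $\Ginf$-)moderateness on every $K\Subset\Om$.
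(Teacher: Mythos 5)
Your construction of the local solution is the same as the paper's (it is the $L^2(\Om)$-solution of Theorem \ref{theo_sol_par} specialized to a smooth right-hand side), but your regularity argument is genuinely different. The paper never touches the extension-by-zero problem: it applies the \emph{left} parametrix identity $q_\eps(x,D)P_\eps(x,D)=I+s_\eps(x,D)$ --- which is exactly why Proposition \ref{prop_sol_hyp} produces $s_\eps$ in addition to $r_\eps$ --- to the already-constructed $L^2(\Om)$-moderate net $u_\eps$, obtaining $u_\eps=q_\eps(x,D)(\phi f_\eps)-s_\eps(x,D)u_\eps$; the first term is in $\EM(\Om)$ (resp.\ of $\Ginf$-type) because $\phi f_\eps\in\Cinfc$ and $(q_\eps)_\eps$ is a (regular) net of symbols, and the second is in $\EM(\Om)$ because $s_\eps$ has an $O(1)$ smooth kernel and $\Vert u_\eps\Vert_{L^2(\Om)}$ is moderate. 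You instead bypass the left parametrix entirely: you first upgrade $h_\eps=(I+\wt{r}_\eps)^{-1}(\phi f_\eps|_\Om)$ from $L^2$ to $\Cinf$ with moderate bounds by one pass through the integral equation $h_\eps=\phi f_\eps-\int k_{r_\eps}(\cdot,y)h_\eps(y)\,dy$ (legitimate, since the integral term only sees $h_\eps$ through its $L^2$-norm and the kernel bounds are uniform in $\eps$ and in the order of differentiation), and then control $q_\eps(x,D)h_\eps^{\mathrm{ext}}$ on compacts of $\Om$ via a cut-off splitting and pseudo-locality of $q_\eps$, with the off-diagonal kernel estimates carrying the uniform $O(\eps^{-a})$ moderateness. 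Both routes are correct; the paper's is shorter because the identity $u_\eps+s_\eps u_\eps=q_\eps(\phi f_\eps)$ sidesteps the jump of $h_\eps^{\mathrm{ext}}$ across $\partial\Om$ and the off-diagonal kernel analysis, while yours has the minor merit of not requiring the second half of Proposition \ref{prop_sol_hyp} and of exhibiting the regularity of the intermediate object $h_\eps$ explicitly. Two small points of hygiene: your choice of $\phi$ identically $1$ on $\overline{\Om}$ (rather than $\phi\in\Cinfc(\Om)$ as in the proof of Theorem \ref{theo_sol_par}) is consistent with the statement but forces you to re-verify $\Vert\wt{r}_\eps\Vert\le 1/2$ with $\supp\phi$ slightly larger than $\Om$, which still follows from \eqref{est_k_eps} by shrinking $\Om$; and the identity $r_\eps(x,D)(h_\eps^{\mathrm{ext}})|_\Om=\wt{r}_\eps(h_\eps)$ in your verification uses precisely that $\phi\equiv 1$ on $\supp h_\eps^{\mathrm{ext}}\subseteq\overline{\Om}$, which you should state.
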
 
\begin{proof}
First $f\in\G(\R^n)$ can be regarded as an element of $\LL_{2,{\rm{loc}}}(\Gc(\R^n),\wt{\C})$. Let $(f_\eps)_\eps$ be a representative of $f$. From Theorem \ref{theo_sol_par} we can construct a local solution $u\in\LL(\Gc(\Om),\wt{\C})$ having a representative  
\[
u_\eps=\wt{q_\eps}(\wt{I}+\wt{r_\eps})^{-1}(\phi f_\eps)
\]
in $\mM_{L^2(\Om)}$. The equality 
\[
q_\eps(x,D)P_\eps(x,D)=I+s_\eps(x,D)
\]
holds on $\S'(\R^n)$. Taking the restrictions of the previous operators to the open set $\Om$, since they all map $L^2(\Om)$ into $L^2(\Om)$ we have that
\beq
\label{formula}
q_\eps(x,D)P_\eps(x,D)v = v+s_\eps(x,D)v
\eeq
holds on $\Om$ for all $v\in L^2(\Om)$. Here and in the sequel we omit the restriction notation $|_\Om$ for the sake of simplicity. From \eqref{formula} follows 
\[
u_\eps+s_\eps(x,D)u_\eps=q_\eps(x,D)(\phi f_\eps).
\]
Since $s_\eps(x,D)$ is a regularizing operator and $(f_\eps)_\eps$ is a net of smooth functions we already see that $(u_\eps)_\eps$ is a net of smooth functions as well. In particular, from the mapping properties of generalized pseudodifferential operators we know that $(q_\eps(x,D)(\phi f_\eps))_\eps\in\mM_{\Cinf(\Om)}=\EM(\Om)$. Finally we write $s_\eps(x,D)u_\eps$ as
\[
\int_\Om k_{s_\eps}(x,y)u_\eps(y)\, dy.
\]
Combining the boundedness of the open set $\Om$ with the following kernel property 
\[
\forall\alpha\in\N^n\, \forall d\in \N\qquad  \sup_{(x,y)\in\R^{2n}}\lara{x}^{-d}\lara{y}^d|\partial^{\alpha}_x\partial^{\beta}_y k_{s_\eps}(x,y)|=O(1) 
\]
we obtain
\beq
\label{regularizing}
\sup_{x\in\Om}|\partial^\alpha s_\eps(x,D)u_\eps|\le \sup_{x\in\Om}\Vert \partial^\alpha_x k_{s_\eps}(x,\cdot)\Vert_{L^2(\Om)}\, \Vert u_\eps\Vert_{L^2(\Om)}\le c\Vert u_\eps\Vert_{L^2(\Om)}
\eeq
for $\eps$ small enough. Hence $(s_\eps(x,D)u_\eps)_\eps\in\EM(\Om)$. Concluding the net $(u_\eps)_\eps$ belongs to $\EM(\Om)$ and generates a solution $u$ in $\G(\Om)$ to $P(x,D)u=\phi f$.

When $f\in\Ginf(\R^n)$ since the net of symbols $(q_\eps)_\eps$ is regular we have that $(q_\eps(x,D)(\phi f_\eps))_\eps$ generates an element of $\Ginf(\Om)$. Clearly as one sees in \eqref{regularizing} also $s(x,D)u$ belongs to $\Ginf(\Om)$. Hence $u\in\Ginf(\Om)$.
\end{proof}

\section{Local solvability of partial differential operators $\G$-elliptic in a neighborhood of a point}
\label{sec_elliptic}
In this section we concentrate on a special type of partial differential operators with coefficients in $\G(\R^n)$. Their properties will inspire the more general model introduced in Section 4. In the sequel we often refer to the work on generalized hypoelliptic and elliptic symbols in \cite{Garetto:ISAAC07, Garetto:08c, GGO:03}.
\begin{definition}
\label{def_ellip_point}
Let $P(x,D)=\sum_{|\alpha|\le m}c_\alpha(x)D^\alpha$ be a partial differential operator with coefficients in $\G(\R^n)$. We say that $P(x,D)$ is $\G$-elliptic in a neighborhood of $x_0$ if there exists a representative $(P_{m,\eps})_\eps$ of the principal symbol $P_{m}$, a neighborhood $\Om$ of $x_0$, $a\in\R$ and $c>0$ such that
\beq
\label{def_ellip}
|P_{m,\eps}(x,\xi)|\ge c\eps^a
\eeq
for all $x\in\Om$, for $|\xi|=1$ and for all $\eps\in(0,1]$.
\end{definition}
\begin{remark}
\label{rem_ellip}
It is clear that \eqref{def_ellip} holds for an arbitrary representative $(P_\eps)_\eps$ of $P$ on a smaller interval $(0,\eta]$ and with some smaller constant $c>0$. In addition if $P(x,D)$ is $\G$-elliptic in a neighborhood of $x_0$ then $P(x_0,D)$ is $\G$-elliptic. The converse does not hold. Indeed, let $\varphi\in\Cinfc(\R)$ with $\varphi(0)=1$ and $\varphi(x)=0$ for $|x|\ge 2$. The differential operator $P(x,D)$ with representative $P_\eps(x,D)=\varphi(x/\eps)D^2$ is $\G$-elliptic in $0$ but not in a neighborhood $\{|x|<r\}$ of $0$. This is due to the fact that $P_\eps(x,\xi)=0$ for $x\neq 0$ and $\eps<2^{-1}|x|$.
\end{remark}
As for $\G$-elliptic operators with constant Colombeau coefficients (see \cite[Section6]{Garetto:08c}) the following estimates hold in a neighborhood of $x_0$.
\begin{proposition}
\label{prop_ellip_point}
Let $P(x,D)$ be $\G$-elliptic in a neighborhood of $x_0$. Then there exists a representative $(P_\eps)_\eps$ of $P$, a neighborhood $\Om$ of $x_0$, moderate strictly nonzero nets $(R_\eps)_\eps$ and $(c_{\alpha,\beta,\eps})_\eps$ and a constant $c_0>0$ such that
\[
|P_{\eps}(x,\xi)|\ge c_0\eps^a\lara{\xi}^m
\]
and
\[
|\partial^\alpha_\xi\partial^\beta_x P_{\eps}(x,\xi)|\le c_{\alpha,\beta,\eps}|P_\eps(x,\xi)|\lara{\xi}^{-|\alpha|}
\]
for $x\in\Om$, $|\xi|\ge R_\eps$ and for all $\eps\in(0,1]$.
\end{proposition}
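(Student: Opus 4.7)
The plan is to bootstrap from the unit-sphere estimate in Definition \ref{def_ellip_point} to a full polynomial lower bound for $P_\eps$ itself on a suitable neighborhood, and then to derive the symbolic $(\alpha,\beta)$-estimates by absorbing lower-order derivatives into $|P_\eps|$.

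First I would fix a representative $(P_\eps)_\eps$ of $P$ and shrink the neighborhood of $x_0$ from Definition \ref{def_ellip_point} to a relatively compact open $\Om$ with $\overline{\Om}$ compact. By homogeneity of $P_{m,\eps}$ in $\xi$ and Remark \ref{rem_ellip}, the estimate $|P_{m,\eps}(x,\xi)|\ge c\eps^a$ for $|\xi|=1$ extends to
\[
|P_{m,\eps}(x,\xi)|\ge c\eps^a|\xi|^m,\qquad x\in\overline{\Om},\ \xi\in\R^n,
\]
on a possibly smaller $\eps$-interval, and with some possibly smaller $c>0$. The lower-order coefficients $c_\alpha\in\G(\R^n)$, $|\alpha|\le m-1$, together with their $x$-derivatives, are $\EM$-moderate uniformly on the compact set $\overline\Om$, so there is a moderate strictly nonzero net $(C_{\beta,\eps})_\eps$ with
\[
|\partial^\beta_x(P_\eps-P_{m,\eps})(x,\xi)|\le C_{\beta,\eps}\lara{\xi}^{m-1},\qquad x\in\overline{\Om},\ \xi\in\R^n.
\]

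Second, to prove the lower bound on $|P_\eps|$, I would choose $R_\eps$ so that the lower-order remainder is absorbed by half of the principal symbol: since
\[
|P_\eps(x,\xi)|\ge |P_{m,\eps}(x,\xi)|-|(P_\eps-P_{m,\eps})(x,\xi)|\ge c\eps^a|\xi|^m - C_{0,\eps}\lara{\xi}^{m-1},
\]
it suffices to take $R_\eps:=\max\bigl(1,\,2C_{0,\eps}(c\eps^a)^{-1}\cdot 2^{m/2}\bigr)$ (a moderate strictly nonzero net) to guarantee $C_{0,\eps}\lara{\xi}^{m-1}\le \tfrac12 c\eps^a|\xi|^m$ for $|\xi|\ge R_\eps$. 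Then $|P_\eps(x,\xi)|\ge \tfrac12 c\eps^a|\xi|^m\ge c_0\eps^a\lara{\xi}^m$ on $\overline\Om\times\{|\xi|\ge R_\eps\}$, for $c_0>0$ depending only on $c$ and $m$.

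Third, for the symbolic estimate, I would bound $\partial^\alpha_\xi\partial^\beta_x P_\eps$ of order $m-|\alpha|$ by a moderate net times $\lara{\xi}^{m-|\alpha|}$ (again using moderateness of the $\partial^\beta_x c_{\alpha,\eps}$ on $\overline{\Om}$), obtaining
\[
|\partial^\alpha_\xi\partial^\beta_x P_\eps(x,\xi)|\le \gamma_{\alpha,\beta,\eps}\lara{\xi}^{m-|\alpha|}
\]
with $(\gamma_{\alpha,\beta,\eps})_\eps$ moderate. Combining this with the lower bound from the second step, for $x\in\Om$ and $|\xi|\ge R_\eps$ one gets
\[
|\partial^\alpha_\xi\partial^\beta_x P_\eps(x,\xi)|\le \gamma_{\alpha,\beta,\eps}\lara{\xi}^{-|\alpha|}\lara{\xi}^m \le \frac{\gamma_{\alpha,\beta,\eps}}{c_0\eps^a}\,|P_\eps(x,\xi)|\lara{\xi}^{-|\alpha|},
\]
so $c_{\alpha,\beta,\eps}:=\gamma_{\alpha,\beta,\eps}(c_0\eps^a)^{-1}$ is the required moderate strictly nonzero net.

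The main routine nuisance is just to check that shrinking $\Om$ and the $\eps$-interval when passing from $(P_{m,\eps})_\eps$ to a generic representative is compatible with the moderate bounds for the $c_\alpha$'s and for $R_\eps$; the only substantive point is the absorption argument in the second step, which is exactly the classical elliptic trick but with $\eps^a$ instead of $1$ on the right-hand side, and this is what forces $R_\eps$ to be genuinely $\eps$-dependent (and only moderate, not constant) in the Colombeau setting.
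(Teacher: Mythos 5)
Your proposal is correct and follows essentially the same route as the paper's proof: the triangle-inequality lower bound $|P_\eps|\ge|P_{m,\eps}|-|P_\eps-P_{m,\eps}|$, absorption of the lower-order remainder by choosing a moderate strictly nonzero radius $R_\eps\sim C_{0,\eps}c^{-1}\eps^{-a}$, and then the symbolic estimates obtained by dividing the trivial upper bound $\gamma_{\alpha,\beta,\eps}\lara{\xi}^{m-|\alpha|}$ by the lower bound $c_0\eps^a\lara{\xi}^m\le|P_\eps(x,\xi)|$. The only differences are cosmetic (the precise constant in $R_\eps$).
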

\begin{proof}
From Definition \ref{def_ellip_point} we have
\[
|P_\eps(x,\xi)|\ge |P_{m,\eps}(x,\xi)|-|P_{\eps}(x,\xi)-P_{m,\eps}(x,\xi)|\ge c\eps^a|\xi|^m-c_{m-1,\eps}\lara{\xi}^{m-1}
\]
for all $\xi\in\R^n$, $x\in\Om$, $\eps\in(0,1]$ and with $(c_{m-1,\eps})_\eps$ a moderate and strictly nonzero net. Defining the radius $R_\eps=\max\{1,2^m c_{m-1,\eps}c^{-1}\eps^{-a}\}$ we get for $x\in\Om$, $|\xi|\ge R_\eps$ and for all $\eps$, the inequality
\[
|P_\eps(x,\xi)|\ge |\xi|^m(c\eps^a-c_{m-1,\eps}2^{m-1}|\xi|^{-1})\ge \frac{c}{2}\eps^a|\xi|^m\ge c_0\eps^a\lara{\xi}^m.
\]
Concerning the derivatives we have, always for $|\xi|\ge R_\eps$,
\[
|\partial^\alpha_\xi\partial^\beta_x P_{\eps}(x,\xi)|\le \lambda_{\alpha,\beta,\eps}\lara{\xi}^{m-|\alpha|}\le \lambda_{\alpha,\beta,\eps}c_0^{-1}\eps^{-a}|P_\eps(x,\xi)|\lara{\xi}^{-|\alpha|}=c_{\alpha,\beta,\eps}|P_\eps(x,\xi)|\lara{\xi}^{-|\alpha|}.
\]
\end{proof}
By adapting the arguments of Proposition \ref{prop_sol_hyp} to this kind of nets of symbols, and in analogy with \cite[Theorem 6.8]{GGO:03}, \cite[Propositions 2.7, 2.8]{Garetto:ISAAC07}, we obtain that a differential operator $\G$-elliptic in a neighborhood of $x_0$ admits a local parametrix.
\begin{proposition}
\label{prop_par_point}
Let $P(x,D)$ be $\G$-elliptic in a neighborhood of $x_0$. Then there exists a neighborhood $\Om$ of $x_0$ and generalized symbols $q\in\G_{S^{-m}(\Om\times\R^n)}$ and $r,s\in\G_{S^{-\infty}(\Om\times\R^n)}$ such that
\beq
\label{formula_1}
\begin{split}
P(x,D)q(x,D)&=I+r(x,D),\\
q(x,D)P(x,D)&=I+s(x,D)
\end{split}
\eeq
as operators acting on $\Gc(\Om)$ with values in $\G(\Om)$.
\end{proposition}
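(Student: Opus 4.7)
My plan is to construct the parametrix by adapting the construction of Proposition~\ref{prop_sol_hyp} to the local, $\eps$-dependent setting provided by Proposition~\ref{prop_ellip_point}. First I fix a representative $(P_\eps)_\eps$, a neighborhood $\Om$ of $x_0$, and nets $(R_\eps)_\eps$, $(c_{\alpha,\beta,\eps})_\eps$ as in Proposition~\ref{prop_ellip_point}. Since $(R_\eps)_\eps$ is moderate and strictly nonzero, I introduce a rescaled cutoff $\psi_\eps(\xi):=\psi(\xi/R_\eps)$, where $\psi\in\Cinf(\R^n)$ vanishes for $|\xi|\le 1$ and equals $1$ for $|\xi|\ge 2$. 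Then $\psi_\eps$ is supported in $\{|\xi|\ge R_\eps\}$ and its derivatives $\partial^\alpha\psi_\eps$, while producing factors $R_\eps^{-|\alpha|}$, are supported in the annulus $R_\eps\le|\xi|\le 2R_\eps$ where $R_\eps^{-|\alpha|}\asymp\lara{\xi}^{-|\alpha|}$. Hence $(\psi_\eps)_\eps \in \M_{S^0(\Om\times\R^n)}$ with seminorms uniformly bounded in $\eps$.

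I then define, inductively in $j$,
\[
q_{0,\eps}(x,\xi):=\psi_\eps(\xi) P_\eps^{-1}(x,\xi),\qquad q_{j,\eps}:=-\psi_\eps P_\eps^{-1}\sum_{|\gamma|+l=j,\,l<j}\frac{(-i)^{|\gamma|}}{\gamma!}\partial^\gamma_\xi P_\eps\,\partial^\gamma_x q_{l,\eps}.
\]
Combining the bound $|P_\eps|\ge c_0\eps^a\lara{\xi}^m$ with the hypoelliptic-type estimate on the derivatives of $P_\eps$ and Leibniz' rule, one proves by induction that $(q_{j,\eps})_\eps \in \M_{S^{-m-j}(\Om\times\R^n)}$, with moderateness orders controlled by products of the $c_{\alpha,\beta,\eps}$ and powers of $\eps^{-a}$. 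A local version of Theorem~\ref{theo_asymp} (with uniform estimates on compact subsets of $\Om$) then produces a net $(q_\eps)_\eps \in \M_{S^{-m}(\Om\times\R^n)}$ satisfying $(q_\eps-\sum_{j<r}q_{j,\eps})_\eps\in\M_{S^{-m-r}(\Om\times\R^n)}$ for every $r\ge 1$, defining the class $q\in\G_{S^{-m}(\Om\times\R^n)}$.

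Next, I compute $P_\eps(x,D)q_\eps(x,D)$ via the standard symbolic calculus on $\Om\times\R^n$. By construction, the recursion defining the $q_{j,\eps}$ makes the $r$-th partial symbol of $P_\eps\sharp q_\eps$ telescope to $\psi_\eps$ modulo $\M_{S^{-r}(\Om\times\R^n)}$; since $\psi_\eps-1$ is compactly supported in $\xi$ with $\eps$-moderate support radius, it contributes a term in $\M_{S^{-\infty}(\Om\times\R^n)}$. Letting $r\to\infty$ yields $(P_\eps\sharp q_\eps-1)_\eps\in\M_{S^{-\infty}(\Om\times\R^n)}$, which gives the remainder $r\in\G_{S^{-\infty}(\Om\times\R^n)}$. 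An analogous construction produces a left parametrix $q'$, and the trick at the end of the proof of Proposition~\ref{prop_sol_hyp} shows that $q-q'$ is of order $-\infty$; hence the same $q$ solves the left equation modulo $\G_{S^{-\infty}(\Om\times\R^n)}$, yielding $s$.

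The main obstacle is handling the $\eps$-dependent threshold $R_\eps$. One must check that the $R_\eps^{-|\alpha|}$ factors produced by differentiating $\psi_\eps$ are absorbed by $\lara{\xi}^{-|\alpha|}$ on the support of these derivatives, so that $\psi_\eps$ behaves as a genuine zeroth-order symbol with $\eps$-uniform seminorms; this control must then be propagated through the induction defining the $q_{j,\eps}$ and through the remainder estimates in the symbolic calculus. A secondary point is verifying that Theorem~\ref{theo_asymp} and the standard composition formula carry over to $\M_{S^m(\Om\times\R^n)}$ with the locally uniform seminorms, but this is essentially a repetition of the $\R^{2n}$ argument on each $K\Subset\Om$.
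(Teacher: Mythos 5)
Your proposal is correct and follows essentially the same route the paper intends: the paper gives no detailed argument for Proposition \ref{prop_par_point} but explicitly defers to ``adapting the arguments of Proposition \ref{prop_sol_hyp}'' to the local estimates of Proposition \ref{prop_ellip_point}, which is precisely what you carry out. Your treatment of the $\eps$-dependent excision radius via $\psi_\eps(\xi)=\psi(\xi/R_\eps)$ (with the $R_\eps^{-|\alpha|}$ factors absorbed by $\lara{\xi}^{-|\alpha|}$ on the annular support, and $\psi_\eps-1$ lying in $\mM_{S^{-\infty}}$ because $(R_\eps)_\eps$ is moderate) is exactly the point that needs care, and you handle it correctly.
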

Note that if we take $\Om$ bounded we can assume that the estimates involving the symbols in \eqref{formula_1} are global in $\xi$ and $x$ as well. The equalities between generalized operators followed from the corresponding equalities at the level of representatives. More precisely,
\[
\begin{split}
P_\eps(x,D)q_\eps(x,D)&=I+r_\eps(x,D),\\
q_\eps(x,D)P_\eps(x,D)&=I+s_\eps(x,D)
\end{split}
\]
for all $\eps\in(0,1]$. Since $P(x,D)$ is properly supported $q(x,D)P(x,D)=I+s(x,D)$ holds on $\G(\Om)$ and $\LL(\Gc(\Om),\wt{\C})$ as well. It follows that if $P(x,D)$ is $\G$-elliptic in a neighborhood of $x_0$ and locally solvable then it inherits the regularity of the right-hand side, in the sense that if $P(x,D)T=v$ on $\Om$ with $T\in\LL(\Gc(\Om),\wt{\C}$ and $v\in\G(\Om)$ then $T\in\G(\Om)$. The problem is that Definition \ref{def_ellip_point} in general does not guarantee the assumption on the parametrix $(q_\eps)_\eps$ and the regularizing term $(r_\eps)_\eps$ which allow to apply Theorem \ref{theo_sol_par} and obtain local solvability. In the following particular case an operator which is $\G$-elliptic in a neighborhood of $x_0$ is also locally solvable at $x_0$.
\begin{proposition}
\label{prop_loc_sol_ellip_point}
Let $P(x,D)=\sum_{|\alpha|\le m}c_\alpha(x)D^\alpha$ be $\G$-elliptic in a neighborhood of $x_0$ with
\[
|P_{m,\eps}(x,\xi)|\ge c\eps^a
\]
in a neighborhood $\Om_1$ of $x_0$, for all $\xi\in\R^n$ with $|\xi|=1$ and for all $\eps\in(0,1]$. If the coefficients $c_\alpha$ are $\Ginf$-regular in $x_0$ of order $a$, i.e. on a neighborhood $\Om_2$ of $x_0$ the following
\[
\forall\beta\in\N^n\,\qquad \sup_{x\in\Om_2}|\partial^\beta c_{\alpha,\eps}(x)|=O(\eps^a)
\]
holds, then there exist a neighborhood $\Om$ of $x_0$ and a cut-off function $\phi\in\Cinfc(\Om)$ identically $1$ near to $x_0$ such that:
\begin{itemize}
\item[(i)] for all $F\in\LL_{2,{\rm{loc}}}(\Gc(\R^n),\wt{\C})$ there exist $T\in\LL(\Gc(\Om),\wt{\C})$ solving $P(x,D)T=\phi F$ on $\Om$;
\item[(ii)] for all $f\in\G(\R^n)$ there exists $u\in\G(\Om)$ solving $P(x,D)u=\phi f$ on $\Om$;
\item[(iii)] for all $f\in\Ginf(\R^n)$ there exists $u\in\Ginf(\Om)$ solving $P(x,D)u=\phi f$ on $\Om$;
\end{itemize}
\end{proposition}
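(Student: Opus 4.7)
The strategy is to reduce the statement to a direct application of Propositions \ref{prop_sol_hyp_2} and \ref{prop_sol_hyp_3}, by verifying their hypotheses (i.e.\ those of Proposition \ref{prop_sol_hyp} with $a=a'$) for a suitably localized version of $P(x,D)$. The $\G$-ellipticity in a neighborhood of $x_0$ is the ``hypoellipticity'' input, while the $\Ginf$-regularity of order $a$ of the coefficients is precisely what makes the moderateness exponents of the symbol and its $x$-derivatives match, giving the balanced situation $a=a'$.

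First, following Remark \ref{rem_set_up}, I would shrink the neighborhoods so that both the lower bound $|P_{m,\eps}(x,\xi)|\ge c\eps^a$ (for $|\xi|=1$) and the coefficient bounds $\sup_{x\in\Om_2}|\partial^\beta c_{\alpha,\eps}(x)|=O(\eps^a)$ hold on a common neighborhood, and pick a cut-off $\varphi\in\Cinfc(\Om_1\cap\Om_2)$ identically $1$ on a smaller neighborhood $V$ of $x_0$. Working with the modified operator $P_1(x,D)=\sum \varphi(x)c_\alpha(x)D^\alpha$ places us in $\G_\infty(\R^n)$ and, since $\varphi$ is compactly supported, turns the local $\Ginf$-regularity into the global estimate $\sup_{x\in\R^n}|\partial^\beta(\varphi c_{\alpha,\eps})|=O(\eps^a)$. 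This immediately yields hypothesis (i) of Proposition \ref{prop_sol_hyp}: $|P_{1,\eps}|^{(m)}_{\alpha,\beta}=O(\eps^a)$ for all $\alpha,\beta$.

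Next I would verify the remaining hypotheses of Proposition \ref{prop_sol_hyp} with $m'=m$ and $a'=a$. The ellipticity lower bound (hypothesis (ii)) is obtained by combining Proposition \ref{prop_ellip_point} with the cut-off construction: on $V$ the symbol of $P_1$ coincides with that of $P$, so $|P_{1,\eps}(x,\xi)|\ge c_0\eps^a\lara{\xi}^m$ there for $|\xi|\ge R_\eps$. To push this estimate to the whole of $\R^n\times\R^n$ I would add a harmless globalizing term of the form $(1-\varphi(x))\,\eps^a(1+|\xi|^2)^{m/2}$ (at the symbol level, viewing the result as a generalized pseudodifferential operator); this extension leaves $P_1$ unchanged on $V$ while producing a globally $\G$-elliptic symbol with the same exponent $a$. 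For the derivative-control hypothesis (iii), re-inspection of the proof of Proposition \ref{prop_ellip_point} shows that the moderate net $c_{\alpha,\beta,\eps}$ there is built from $\lambda_{\alpha,\beta,\eps}=O(\eps^a)$ (coming from $|\partial^\beta c_{\alpha,\eps}|=O(\eps^a)$) divided by $c_0\eps^a$; hence $c_{\alpha,\beta,\eps}=O(1)$, exactly as required.

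Having verified all three conditions of Proposition \ref{prop_sol_hyp} with $a=a'$, Proposition \ref{prop_sol_hyp_2} applied to $P_1$ yields (i), and Proposition \ref{prop_sol_hyp_3} yields (ii) and (iii); since $P_1=P$ on $V\supseteq\Om$ and the equation is posed on $\Om$, the same solutions work for $P(x,D)$. The main obstacle is the globalization step: the ellipticity is only a priori local, while Proposition \ref{prop_sol_hyp} requires a symbol estimate on all of $\R^n\times\R^n$. Handling this without destroying the differential structure on $V$ and without spoiling the uniformity in $\eps$ of the constants---so that the pseudodifferential composition/parametrix construction still fits the framework of Proposition \ref{prop_sol_hyp}---is where the care is needed; using a cut-off combined with a fixed $\eps^a$-scaled elliptic model symbol is the cleanest way to carry it out.
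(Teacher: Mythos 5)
Your overall strategy coincides with the paper's: verify the hypotheses of Proposition \ref{prop_sol_hyp} with $a=a'$ and then invoke Propositions \ref{prop_sol_hyp_2} and \ref{prop_sol_hyp_3}. The two quantitative observations you make are exactly the ones the paper relies on: because $c_{m-1,\eps}=O(\eps^a)$ and $\lambda_{\alpha,\beta,\eps}=O(\eps^a)$, the radius $R_\eps$ in Proposition \ref{prop_ellip_point} can be taken independent of $\eps$ and the nets $c_{\alpha,\beta,\eps}=\lambda_{\alpha,\beta,\eps}c_0^{-1}\eps^{-a}$ are $O(1)$, which is precisely the balanced situation $a=a'$. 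So far, so good.

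The step that fails is your globalization. The symbol $\varphi(x)P_\eps(x,\xi)+(1-\varphi(x))\,\eps^a\lara{\xi}^{m}$ is a convex combination of two quantities that need not have comparable phases, so it can cancel in the transition region $0<\varphi(x)<1$. Concretely, take $P(x,D)=-D^2$ in one variable ($P_\eps(x,\xi)=-\xi^2$, $a=0$): the combined symbol is $(1-2\varphi(x))\xi^2+(1-\varphi(x))$, which equals the constant $1/2$ where $\varphi(x)=1/2$ (so no bound $\ge c\lara{\xi}^{m'}$ with $m'=m$ is possible), and where $\varphi(x)=1/2+\delta$ it vanishes at $|\xi|\sim(2\delta)^{-1/2}$, i.e.\ at arbitrarily large $|\xi|$ as $\delta\to0$. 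Hence hypothesis $(ii)$ of Proposition \ref{prop_sol_hyp} fails for every fixed $R$ and every $m'\ge0$, and the parametrix construction does not go through for your modified symbol. Two repairs are available. One is to interpolate with the frozen symbol rather than the model: since $\sup_{\Om_2}|\partial^\beta c_{\alpha,\eps}|=O(\eps^a)$ gives $|P_{m,\eps}(x,\xi)-P_{m,\eps}(x_0,\xi)|\le C\eps^a|x-x_0|\,|\xi|^m$, one can shrink $\supp\varphi$ so that $\varphi(x)P_{m,\eps}(x,\xi)+(1-\varphi(x))P_{m,\eps}(x_0,\xi)=P_{m,\eps}(x_0,\xi)+\varphi(x)\bigl(P_{m,\eps}(x,\xi)-P_{m,\eps}(x_0,\xi)\bigr)$ stays bounded below by $\tfrac12 c\eps^a|\xi|^m$. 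The other, which is what the paper actually does, is to avoid globalization altogether: the parametrix is built in the local symbol classes $S^{m}(\Om\times\R^n)$ with $\Om$ bounded (Proposition \ref{prop_par_point}), for which the estimates of Proposition \ref{prop_ellip_point} on the neighborhood $\Om$ are all that is needed, and Theorem \ref{theo_sol_par} is local in nature; assertion $(ii)$ then also follows from the left parametrix of Proposition \ref{prop_par_point}, which upgrades the dual solution of $(i)$ to an element of $\G(\Om)$.
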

\begin{proof}
We can choose a representative $(P_\eps)_\eps$ such that the inequalities $|P_{m,\eps}(x,\xi)|\le c\eps^a$ and $|P_{\eps}(x,\xi)-P_{m,\eps}(x,\xi)|\le c_{m-1}\eps^a\lara{\xi}^{m-1}$ hold on the interval $(0,1]$, for all $x$ in a  neighborhood of $x_0$ and all $\xi\in\R^n$. By following the proof of Proposition \ref{prop_ellip_point} we see that the radius does not depend on $\eps$ and that the nets $(c_{\alpha,\beta,\eps})$ are $O(1)$ as $\eps$ tends to $0$. We are under the hypotheses of Proposition \ref{prop_sol_hyp_2}. This yields the first assertion. Proposition \ref{prop_par_point} and the considerations above on the regularity of $P(x,D)$ prove assertion $(ii)$. Finally, assertion $(iii)$ is clear for Proposition \ref{prop_sol_hyp_3}.
\end{proof}
We have found a class of partial differential operators with coefficients in $\G(\R^n)$, that under the hypothesis of $\G$-ellipticity in a neighborhood $x_0$ and under suitable assumptions on the moderateness of the coefficients, are locally solvable at $x_0$. These locally solvable operators belong to the wider family of operators which can be written in the form
\beq
\label{const_str_form}
P_0(D)+\sum_{j=1}^r c_j(x)P_j(D),
\eeq 
in a neighborhood of $x_0$. Here the operators $P_0(D)$, $P_j(D)$, $j=1,...,r$ have constant Colombeau coefficients and each $c_j$ is a Colombeau generalized function. 

We conclude this section by proving that a differential operator $P(x,D)$ with coefficients in $\G$ which is $\G$-elliptic in $x_0$, i.e. $P(x_0,D)$ is $\G$-elliptic, can be written in the form \eqref{const_str_form}.
\begin{proposition}
\label{prop_ellip_cs}
Let $P(x,D)=\sum_{|\alpha|\le m}c_\alpha(x)D^\alpha$ be a differential operator with coefficients in $\G(\R^n)$ which is $\G$-elliptic in $x_0$. Then $P(x,D)$ can be written in the form \eqref{const_str_form} with
\begin{itemize}
\item[-] $c_j\in\G(\R^n)$, $c_j(x_0)=0$, $P_0(D)$ and $P_j(D)$ operators with constant Colombeau coefficients,
\item[-] $\wt{P_0}$ invertible in some point of $\R^n$,
\item[-] $P_0(D)$ stronger than any $P_j(D)$.
\end{itemize}
\end{proposition}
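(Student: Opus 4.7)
The plan is to produce the desired decomposition by simply splitting off the point-evaluation at $x_0$. Concretely, I would set
\[
P_0(D) := P(x_0,D) = \sum_{|\alpha|\le m} c_\alpha(x_0)\, D^\alpha,
\]
where $c_\alpha(x_0)\in\wt{\C}$ is the Colombeau point value of $c_\alpha$. I would then define, for each multi-index $\alpha$ with $|\alpha|\le m$, the generalized function $c_\alpha(x) - c_\alpha(x_0) \in \G(\R^n)$, which vanishes at $x_0$, and associate the monomial operator $D^\alpha$. After relabeling the non-trivial indices by $j=1,\dots,r$ (setting $c_j(x):= c_{\alpha_j}(x)-c_{\alpha_j}(x_0)$ and $P_j(D):=D^{\alpha_j}$), one has by construction the identity
\[
P(x,D) = P_0(D) + \sum_{j=1}^r c_j(x)\, P_j(D),
\]
with $c_j(x_0)=0$ and with $P_0(D)$, $P_j(D)$ carrying constant Colombeau coefficients.

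Next I would verify the two structural conditions. The $\G$-ellipticity of $P(x,D)$ at $x_0$ means that the principal symbol $P_{m,\eps}(x_0,\xi)$ satisfies the lower bound \eqref{est_ellip} on $|\xi|=1$; since $P_0(D)$ has principal part $\sum_{|\alpha|=m} c_\alpha(x_0) D^\alpha = P_m(x_0,D)$, this directly makes the constant-coefficient operator $P_0(D)$ a $\G$-elliptic operator of order $m$ in the sense used in Subsection \ref{subsec_comp}. In particular its principal symbol is invertible at every $\xi_0\in\R^n$, so Remark \ref{rem_HO_inv} (equivalently \cite[Proposition 7.6]{HO:03}) yields invertibility of $\wt{P_0}$ at such a $\xi_0$.

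For the domination property I would invoke Proposition \ref{prop_g_ellip}: a $\G$-elliptic operator of order $m$ with constant Colombeau coefficients is stronger than every differential operator with coefficients in $\wt{\C}$ of order $\le m$. Since each $P_j(D)=D^{\alpha_j}$ is of order $|\alpha_j|\le m$, this gives $P_j(D)\prec P_0(D)$ for every $j$, as required.

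There is no substantial obstacle here; the statement is essentially a bookkeeping translation of $\G$-ellipticity at a point into the bounded-perturbation framework. The only delicate point worth highlighting is that the splitting is valid because Colombeau functions admit well-defined point values in $\wt{\C}$, so that $c_\alpha(x)-c_\alpha(x_0)$ genuinely defines an element of $\G(\R^n)$ vanishing at $x_0$, and because the $\G$-ellipticity hypothesis is formulated pointwise in $x$, hence transfers immediately from $P(x,D)$ at $x_0$ to the constant-coefficient operator $P_0(D)$.
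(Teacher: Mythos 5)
Your proposal is correct and follows essentially the same route as the paper: set $P_0(D)=P(x_0,D)$, write $P(x,D)=P_0(D)+\sum_{|\alpha|\le m}(c_\alpha(x)-c_\alpha(x_0))D^\alpha$ with $P_{j}(D)=D^{\alpha}$, and invoke Proposition \ref{prop_g_ellip} to get both the invertibility of $\wt{P_0}$ and $P_j(D)\prec P_0(D)$. The only (harmless) difference is that you spell out the transfer of $\G$-ellipticity from $P(x,D)$ at $x_0$ to $P_0(D)$, which in the paper is immediate from the very definition of ``$\G$-elliptic in $x_0$''.
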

\begin{proof}
We set $P_0(D)=P(x_0,D)$ and we have
\[
P(x,D)=P_0(D)+\sum_{|\alpha|\le m}(c_\alpha(x)-c_\alpha(x_0))D^\alpha.
\]
Clearly the coefficients $c_\alpha(x)-c_\alpha(x_0)$ belong to $\G(\R^n)$ and vanish for $x=x_0$. For each $\alpha\in\N^n$ we find an operator $P_{j(\alpha)}(D)=D^\alpha$. By hypothesis $P_0(D)$ is $\G$-elliptic in $x_0$. Hence from Proposition \ref{prop_g_ellip} we have that $P_0(D)$ is stronger than any $P_{j(\alpha)}(D)$ with $|\alpha|\le m$ and the weight function $\wt{P_{0}}$ is invertible in any point of $\R^n$.
\end{proof}
A differential operator which is $\G$-elliptic in a neighborhood $\Om$ of $x_0$ is in particular $\G$-elliptic in $x_0$ and therefore it can be written in the form \eqref{const_str_form} on the whole of $\R^n$. The special structure \eqref{const_str_form} of the $\G$-elliptic operators motivates the investigations of Section \ref{sec_bounded}.

\section{Bounded perturbations of differential operators with constant Colombeau coefficients: definition and examples}
\label{sec_bounded}
In this section we concentrate on operators with coefficients in $\G(\R^n)$ which are locally a bounded perturbation of a differential operator with constant Colombeau coefficients as in \eqref{const_str_form}. More precisely, we say that $P(x,D)=\sum_{|\alpha|\le m}c_\alpha(x)D^\alpha$ is of \emph{bounded perturbation type}, or of \emph{BP-type}, in a neighborhood $\Om$ of $x_0$ if it has the form 
\[
P_0(D)+\sum_{j=1}^r c_j(x)P_j(D),
\]
when restricted to $\Om$, with
\begin{itemize}
\item[(h1)]$c_j\in\G(\Om)$, $c_j(x_0)=0$, $P_0(D)=P(x_0,D)$ and $P_j(D)$, $j=1,...,r$, operators with constant Colombeau coefficients
\item[(h2)]$\wt{P_0}$ invertible in some point of $\R^n$,
\item[(h3)]$P_0(D)$ stronger than any $P_j(D)$.
\end{itemize}
\begin{remark}
Our definition of BP-type is clearly inspired by the classical theory of operators of constant strength (see \cite{Hoermander:63, Hoermander:V2}). The direct generalization of this concept to the Colombeau setting would mean to require $P(x_0,D)\prec P(x,D)\prec P(x_0,D)$ for all $x$ in neighborhood $\Om$ of $x_0$ with $\prec$ the order relation introduced in Section 1. However, due to some some structural and technical constraints of our framework, it is not clear at the moment if one can obtain from this general definition a local bounded perturbation property as above. This is related to the fact that one can not use the properties of a linear space on the set of differential operators with coefficients in $\wt{\C}$ weaker than $P_0(D)$. Indeed this set has the algebraic structure of a module over $\wt{\C}$ and $\wt{\C}$ is only a ring and not a field. 
\end{remark}
As for the operators with constant Colombeau coefficients (Theorem 7.8 in \cite{HO:03}) the local solvability of $P(x,D)$ in the Colombeau algebra $\G(\Om)$, where $\Om$ is an open neighborhood of $x_0$, implies the invertibility of the weight function $\wt{P_0}$ in some point of $\R^n$.
\begin{proposition}
\label{prop_nec_cond}
Let $P(x,D)$ be a differential operator with coefficients in $\G(\R^n)$ such that has the form
\[
P_0(D)+\sum_{j=1}^r c_j(x)P_j(D),
\]
in a neighborhood $\Om$ of $x_0$ and fulfills the hypothesis $(h1)$. Let $v\in\G(\Om)$ with $v(x_0)$ invertible in $\wt{\C}$. If the equation $P(x,D)u=v$ is solvable in $\G(\Om)$ then $\wt{P_0}$ is invertible in some point of $\R^n$.
\end{proposition}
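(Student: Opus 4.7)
The plan is to reduce the variable coefficient equation $P(x,D)u = v$ on $\Om$ to a constant coefficient equation $P_0(D)u = w$ on $\Om$ whose right-hand side is still invertible at $x_0$, and then invoke Theorem 7.8 of \cite{HO:03} (recalled in Remark \ref{rem_HO_inv}). Concretely, using the decomposition $P(x,D) = P_0(D) + \sum_{j=1}^{r} c_j(x) P_j(D)$ valid on $\Om$, any solution $u \in \G(\Om)$ yields
\[
P_0(D)u = v - \sum_{j=1}^{r} c_j P_j(D) u =: w \in \G(\Om).
\]

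The key step is to verify that $w(x_0)$ is invertible in $\wt{\C}$. Since each $P_j(D)$ has coefficients in $\wt{\C}$, we have $P_j(D) u \in \G(\Om)$, and hence $(P_j(D) u)(x_0) \in \wt{\C}$, represented by a net in $\EM$. By hypothesis $(h1)$, $c_j(x_0) = 0$ in $\wt{\C}$, so its representative at $x_0$ belongs to $\Neg$. Since $\Neg \cdot \EM \subseteq \Neg$, we conclude that $(c_j P_j(D) u)(x_0) = 0$ in $\wt{\C}$ for every $j$, and therefore $w(x_0) = v(x_0)$, which is invertible by assumption.

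Once this reduction is in place, Theorem 7.8 of \cite{HO:03} applied to the constant coefficient equation $P_0(D)u = w$ on $\Om$ immediately gives that $\wt{P_0}$ is invertible at some point of $\R^n$, as required. There is no genuine obstacle: the whole argument rests on the BP-decomposition together with the vanishing condition $c_j(x_0) = 0$, which together force the perturbation term to disappear at $x_0$ and reduce the statement to the constant coefficient theorem of H\"ormann and Oberguggenberger. Neither the hypotheses $(h2)$ nor $(h3)$ are needed for this necessary-condition direction.
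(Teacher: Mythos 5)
Your proof is correct, but it takes a genuinely different route from the paper's. You rearrange the equation into the constant-coefficient form $P_0(D)u=w$ with $w=v-\sum_{j}c_jP_j(D)u$, check via $c_j(x_0)=0$ and the multiplicativity of point evaluation that $w(x_0)=v(x_0)$ is still invertible, and then delegate the conclusion to Theorem 7.8 of H\"ormann--Oberguggenberger as recalled in Remark \ref{rem_HO_inv}. The paper instead gives a self-contained argument that never leaves the point $x_0$: it observes $v(x_0)=P(x_0,D)u(x_0)=P_0(D)u(x_0)=\sum_{|\alpha|\le m}c_\alpha(x_0)D^\alpha u(x_0)$ and that $\wt{P_0}^2(0)=\sum_{|\alpha|\le m}(\alpha!)^2|c_\alpha(x_0)|^2$, then argues by contradiction: if this generalized number is not invertible, one can construct a nonzero $c\in\wt{\R}$ supported on a suitable sequence $\eps_q\searrow 0$ with $c\cdot c_\alpha(x_0)=0$ for every $\alpha$, whence $c\cdot v(x_0)=0$, contradicting the invertibility of $v(x_0)$. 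Your version buys brevity by reusing the cited theorem; the paper's version is more elementary in that it only uses the equation evaluated at the single point $x_0$ (essentially re-proving the needed pointwise special case of Theorem 7.8) rather than solvability of $P_0(D)u=w$ on all of $\Om$. Both approaches correctly note that $(h2)$ and $(h3)$ play no role here.
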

\begin{proof}
We begin by observing that
\[
v(x_0)=P(x_0,D)u(x_0)=P_0(D)u(x_0).
\]
From \eqref{est_inv} we see that $\wt{P_0}$ is invertible in some point of $\R^n$ if and only if it is invertible in any point of $\R^n$ and then in particular in $\xi=0$. We assume that $\wt{P_0}(0)$ is not invertible. It follows that for all $q$ there exists $\eps_q\in(0,q^{-1}]$ such that 
\[
\wt{P_{\eps_q}}^2(0)=\sum_{|\alpha|\le m}|c_{\alpha,\eps_q}(x_0)|^2(\alpha !)^2<\eps_q^q.
\]
Choosing $\eps_q\searrow 0$ we have
\[
|c_{\alpha,\eps_\nu}(x_0)|^2\le (\alpha!)^{-2}\eps_\nu^\nu\le (\alpha!)^{-2}\eps_\nu^q
\]
for all $\nu\ge q$. Hence, for $c_\eps=1$ for $\eps=\eps_q$, $q\in\N$, and $c_\eps=0$ otherwise, all the nets $(c_\eps\cdot c_{\alpha,\eps}(x_0))_\eps$ are negligible. Concluding, for $c=[(c_\eps)_\eps]\in\wt{\R}$ the equality $v(x_0)=P_0(D)u(x_0)$ implies
\[
c\cdot v(x_0)=c\cdot P_0(D)u(x_0)=\sum_{|\alpha|\le m}(c\cdot c_\alpha(x_0))D^\alpha u(x_0)=0,
\]
in contradiction with $c\cdot v(x_0)\neq 0$.
\end{proof}

We now collect some examples of operators of BP-type. It is clear by Proposition \ref{prop_ellip_cs} that the differential operators which are $\G$-elliptic in $x_0$ are of BP-type in any neighborhood of $x_0$. For the advantage of the reader we write two explicit examples.
\begin{example}
\leavevmode
\begin{trivlist}
\item[(i)] Let $c_i\in\G(\R)$, $i=0,...,3$, with $c_2(0)$ invertible in $\wt{\C}$, $\supp\, c_3\subseteq(-3/2,-1/2)$ and $\supp\, c_2\subseteq(-1,1)$. The operator 
\[
P(x,D)=c_3(x)D^3+c_2(x)D^2+c_1(x)D+c_0(x)
\]
is a bounded perturbation of $P(0,D)$ in the neighborhood $\Om:=(-1/4,1/4)$. Indeed, $P(x,D)|_\Om = c_2|_\Om D^2+c_1|_\Om D+c_0|_\Om$ and $P(x,D)|_\Om$ is $\G$-elliptic in $0$.
\item[(ii)] For $i=1,2$ let $\varphi_{i}\in\Cinfc(\R)$, $\varphi_i(0)=1$ and $\varphi_{i,\eps}(x)=\varphi_i(x/\eps)$. Let $c_\alpha\in\G(\R)$ for $|\alpha|\le 1$. The operator $P(x,D)$ with representative
\[
P_\eps(x,D)=\varphi_{1,\eps}(x)D^2_{x_1}+\varphi_{2,\eps}(x)D^2_{x_2}+\sum_{|\alpha|\le 1}c_{\alpha,\eps}(x)D^\alpha
\] 
is a bounded perturbation of $P(0,D)$. More precisely we can write
\[
P_\eps(x,D)=D^2_{x_1}+D^2_{x_2}+(\varphi_{1,\eps}(x)-1)D_{x_1}^2+(\varphi_{2,\eps}(x)-1)D_{x_2}^2+\sum_{|\alpha|\le 1}(c_{\alpha,\eps}(x)-c_{\alpha,\eps}(0))D^\alpha.
\]
\end{trivlist}
\end{example} 
A statement analogous to Proposition \ref{prop_ellip_cs} holds for operators of principal type.
\begin{proposition}
\label{prop_princ_cs}
Let $P(x,D)=\sum_{|\alpha|\le m}c_\alpha(x)D^\alpha$ be a differential operator with coefficients in $\G(\R^n)$. Let the coefficients of the principal part be constant with at least one of them invertible in $\wt{\C}$. If $P(x,D)$ is of principal type then it is of BP-type in any neighborhood $\Om$ of any point $x_0\in\R^n$.
\end{proposition}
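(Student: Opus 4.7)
The plan is to exploit the hypothesis that the principal part of $P(x,D)$ already has constant Colombeau coefficients so that the difference $P(x,D)-P(x_0,D)$ carries only lower order terms. Concretely, for an arbitrary $x_0\in\R^n$ and an arbitrary neighborhood $\Om$ of $x_0$ I would write
\[
P(x,D)=P(x_0,D)+\sum_{|\alpha|\le m-1}(c_\alpha(x)-c_\alpha(x_0))\,D^\alpha,\qquad x\in\Om,
\]
observing that no $|\alpha|=m$ term appears in the sum because the principal coefficients are constant. Setting $P_0(D):=P(x_0,D)$, $P_{j(\alpha)}(D):=D^\alpha$ and $c_{j(\alpha)}(x):=c_\alpha(x)-c_\alpha(x_0)$ for each multi-index $\alpha$ with $|\alpha|\le m-1$, the coefficients $c_{j(\alpha)}$ belong to $\G(\R^n)$ and vanish at $x_0$, while $P_0(D)$ and each $P_{j(\alpha)}(D)$ have constant Colombeau coefficients. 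This gives (h1) immediately.

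For (h2) I would invoke Remark \ref{rem_HO_inv}, which says that invertibility in $\wt{\C}$ of at least one coefficient of the principal part is sufficient for the invertibility of the associated weight function in some (hence, via \eqref{est_inv}, every) point of $\R^n$. Since by hypothesis the principal coefficients are constant and at least one of them is invertible in $\wt{\C}$, and since the principal parts of $P(x,D)$ and of $P_0(D)=P(x_0,D)$ are literally the same operator, this gives the invertibility of $\wt{P_0}$ required in (h2).

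For (h3) I would use Proposition \ref{prop_princ_type}(i): $P_0(D)$ has constant Colombeau coefficients, its principal part coincides with that of $P(x,D)$, and the principal type condition from Definition \ref{def_princ_point} is a statement about the principal symbol only, so $P_0(D)$ itself is of principal type and still has an invertible coefficient among its principal ones. Hence Proposition \ref{prop_princ_type}(i) guarantees $P_0(D)\succ\succ Q(D)$, and in particular $P_0(D)\succ Q(D)$, for every $Q(D)$ with coefficients in $\wt{\C}$ of order $\le m-1$; applied to $Q(D)=D^\alpha$ this yields (h3).

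The only delicate point, and the reason the hypothesis ``principal part with constant Colombeau coefficients'' is essential, is that Proposition \ref{prop_princ_type}(i) produces domination solely over operators of order strictly smaller than $m$. Were any top order coefficient of $P(x,D)$ genuinely $x$-dependent, the decomposition above would contain some $P_{j(\alpha)}(D)=D^\alpha$ with $|\alpha|=m$, and $P_0(D)$ need not be stronger than such an operator unless it is $\G$-elliptic (by Proposition \ref{prop_g_ellip}); principal type alone does not suffice. Thus the constancy of the top order coefficients is exactly what matches the available ordering result and makes the proof go through without further assumptions.
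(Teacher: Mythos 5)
Your proof is correct and follows essentially the same route as the paper: the decomposition $P(x,D)=P(x_0,D)+\sum_{|\alpha|\le m-1}(c_\alpha(x)-c_\alpha(x_0))D^\alpha$, invertibility of $\wt{P_0}$ from the invertible principal coefficient, and Proposition \ref{prop_princ_type}(i) to get domination of the order $\le m-1$ operators $D^\alpha$. Your closing remark on why constancy of the top-order coefficients is indispensable is a sound observation but not a deviation from the paper's argument.
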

\begin{proof}
We take $P_0(D)=P(x_0,D)=P_m(D)+\sum_{|\alpha|\le m-1}c_\alpha(x_0)D^\alpha$. By hypothesis we have that $P_0(D)$ is of principal type. Moreover, the invertibility of one of the coefficients of $P_m(D)$ entails the invertibility of $\wt{P_0}(\xi)$. Now we write
\[
P(x,D)=P_0(D)+\sum_{|\alpha|\le m-1}(c_\alpha(x)-c_\alpha(x_0))D^\alpha.
\]
The operators $P_{j(\alpha)}(D)=D^\alpha$ are all of order $\le m-1$ and therefore by Proposition \ref{prop_princ_type}$(i)$ $P_0(D)$ dominates any $P_{j(\alpha)}$.
\end{proof}
In the next proposition we see an example of operator of BP-type where the decomposition is obtained by deriving with respect to $\xi$.
\begin{proposition}
\label{prop_deriv}
Let $P(x,\xi)=c_{2,0}\xi_1^2+c_{1,1}\xi_1\xi_2+c_{0,2}\xi_2^2+c_{1,0}(x)\xi_1+c_{0,1}(x)\xi_2+c_{0,0}(x)$ be a polynomial in the $\R^2$-variable $(\xi_1,\xi_2)$ with coefficients in $\G(\R^2)$. Let the coefficients of the principal part be constant with at least one invertible. Let $x_0\in\R^2$ and $P_0(D)=P(x_0,D)$. If $4c_{2,0}c_{0,2}-c_{1,1}^2$ and $2c_{2,0}+2c_{0,2}+c_{1,1}$ are invertible in $\wt{\C}$ then for $j=1,2,3$ there exists $c_j\in\G(\R^2)$ with $c_j(x_0)=0$ such that
\[
P(x,D)=P_0(D)+c_1(x)P_1(D)+c_2(x)P_2(D)+c_3(x)P_3(D),
\]
where
\[
\begin{split}
P_1(D)&=P_0^{(1,0)}(D),\\
P_2(D)&= P_0^{(0,1)}(D),\\
P_3(D)&= P_0^{(2,0)}(D)+P^{(1,1)}_0(D)+P^{(0,2)}_0(D).
\end{split}
\]
Hence, $P(x,D)$ is of BP-type in any neighborhood of $x_0$.
\end{proposition}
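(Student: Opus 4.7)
The plan is purely constructive: I will solve for $c_1, c_2, c_3$ by matching coefficients in the desired identity, then verify the three conditions (h1)--(h3) of the definition of BP-type.

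First I compute the relevant derivatives of $P_0(\xi)=P(x_0,\xi)$: one has
\[
P_0^{(1,0)}(\xi)=2c_{2,0}\xi_1+c_{1,1}\xi_2+c_{1,0}(x_0),\quad P_0^{(0,1)}(\xi)=c_{1,1}\xi_1+2c_{0,2}\xi_2+c_{0,1}(x_0),
\]
and $P_0^{(2,0)}(\xi)+P_0^{(1,1)}(\xi)+P_0^{(0,2)}(\xi)=2c_{2,0}+c_{1,1}+2c_{0,2}$. Writing $P(x,\xi)-P_0(\xi)=a_1(x)\xi_1+a_2(x)\xi_2+a_0(x)$ with $a_i:=c_{(i)}-c_{(i)}(x_0)\in\G(\R^2)$ (which vanish at $x_0$), matching coefficients of $1,\xi_1,\xi_2$ in the ansatz yields the system
\begin{align*}
a_1(x)&=2c_{2,0}\,c_1(x)+c_{1,1}\,c_2(x),\\
a_2(x)&=c_{1,1}\,c_1(x)+2c_{0,2}\,c_2(x),\\
a_0(x)&=c_{1,0}(x_0)\,c_1(x)+c_{0,1}(x_0)\,c_2(x)+(2c_{2,0}+c_{1,1}+2c_{0,2})\,c_3(x).
\end{align*}

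The $2\times2$ matrix of the first two equations has determinant $4c_{2,0}c_{0,2}-c_{1,1}^2$, invertible in $\wt{\C}$ by hypothesis; since an invertible element of $\wt{\C}$ is a (constant) invertible element of $\G(\R^2)$, I can solve explicitly for $c_1,c_2\in\G(\R^2)$. Because $a_1(x_0)=a_2(x_0)=0$, Cramer's rule forces $c_1(x_0)=c_2(x_0)=0$. Dividing the third equation by the invertible constant $2c_{2,0}+c_{1,1}+2c_{0,2}$ then yields $c_3\in\G(\R^2)$, with $c_3(x_0)=0$ by the same argument. This proves (h1).

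For (h2), the assumption that at least one coefficient of the principal part is invertible in $\wt{\C}$ gives, by Remark \ref{rem_HO_inv}, invertibility of $\wt{P_0}$ at some point of $\R^n$. For (h3), each $P_j(D)$ is a linear combination with coefficients in $\wt{\C}$ of derivatives $P_0^{(\alpha)}(D)$ with $\alpha\neq 0$; since $P_0^{(\alpha)}(D)\prec\prec P_0(D)$ (noted directly after Definition \ref{def_domin}), in particular $P_0^{(\alpha)}(D)\prec P_0(D)$, and Proposition \ref{prop_basic_1}(i) gives $P_j(D)\prec P_0(D)$. The conclusion that $P(x,D)$ is of BP-type in any neighborhood of $x_0$ is then immediate from the definition in Section \ref{sec_bounded}. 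There is no real obstacle here: the two invertibility hypotheses are tuned exactly to the two divisions one must perform, and the comparison of operators is handled entirely by the earlier results of Subsection \ref{subsec_comp}.
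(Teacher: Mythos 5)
Your proof is correct and follows essentially the same route as the paper: the same coefficient-matching system, the same two divisions justified by the two invertibility hypotheses, and the same appeal to $P_0^{(\alpha)}(D)\prec\prec P_0(D)$ together with the invertibility of a principal coefficient to verify (h2) and (h3). The only cosmetic difference is that the paper writes out the explicit Cramer's-rule formulas for $c_1,c_2,c_3$ while you argue their vanishing at $x_0$ abstractly, which is equally valid.
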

\begin{proof}
We argue at the level of symbols. By fixing $x=x_0$ we have
\[
\begin{split}
P_{0}(\xi)&=P(x_0,\xi)=c_{2,0}\xi_1^2+c_{1,1}\xi_1\xi_2+c_{0,2}\xi_2^2+c_{1,0}(x_0)\xi_1+c_{0,1}(x_0)\xi_2+c_{0,0}(x_0)\\
P_{0}^{(1,0)}(\xi)&= 2c_{2,0}\xi_1+c_{1,1}\xi_2+c_{1,0}(x_0),\\
P_{0}^{(0,1)}(\xi)&= c_{1,1}\xi_1+2c_{0,2}\xi_2+c_{0,1}(x_0),\\
P_{0}^{(1,1)}(\xi)&=c_{1,1},\\
P_{0}^{(2,0)}(\xi)&= 2c_{2,0},\\
P_{0}^{(0,2)}(\xi)&=2c_{0,2}.
\end{split}
\]
From the invertibility of one of the principal part's coefficients we have that $\wt{P_0}$ is invertible in any point $\xi_0$ of $\R^n$. Moreover, $P_1:=P_{0}^{(0,1)}\prec\prec P_0$, $P_2:=P_{0}^{(0,1)}\prec\prec P_0$ and $P_3:=P_0^{(2,0)}+P^{(1,1)}_0+P^{(0,2)}_0\prec\prec P_0$. The operator 
\[
P(x,D)=P_0(D)+(c_{1,0}(x)-c_{1,0}(x_0))D_{x_1}+(c_{0,1}(x)-c_{0,1}(x_0))D_{x_2}+(c_{0,0}(x)-c_{0,0}(x_0))
\]
can be written as 
\[
P_0(D)+c_1(x)P_1(D)+c_2(x)P_2(D)+c_3(x)P_3(D),
\]
where the generalized functions $c_1(x)$, $c_2(x)$ and $c_3(x)$ are solutions of the following system:
\[
\begin{tabular}{ccccccc}
$2c_{2,0}\,c_1(x)$ & $+$ &$c_{1,1}\,c_2(x)$& {\,}& {\,} &$=$ &$c_{1,0}(x)-c_{1,0}(x_0)$,\\
$c_{1,1}\,c_1(x)$ & $+$ & $2c_{0,2}\,c_2(x)$& {\,}& {\,}& $=$ & $c_{0,1}(x)-c_{0,1}(x_0)$,\\
$c_{1,0}(x_0)\,c_1(x)$& $+$& $c_{0,1}(x_0)\,c_2(x)$& $+$& $(2c_{2,0}+2c_{0,2}+c_{1,1})c_3(x)$&$=$ &$c_{0,0}(x)-c_{0,0}(x_0)$.
\end{tabular}
\]
In detail, $c_1(x)=\frac{2c_{0,2}(c_{1,0}(x)-c_{1,0}(x_0))-c_{1,1}(c_{0,1}(x)-c_{0,1}(x_0))}{4c_{2,0}c_{0,2}-c_{1,1}^2}$, $c_2(x)=\frac{2c_{2,0}(c_{0,1}(x)-c_{0,1}(x_0))-c_{1,1}(c_{1,0}(x)-c_{1,0}(x_0))}{4c_{2,0}c_{0,2}-c_{1,1}^2}$ and
$c_3(x)=\frac{(c_{0,0}(x)-c_{0,0}(x_0))-c_{1,0}(x_0)c_1(x)-c_{0,1}(x_0)c_2(x)}{2c_{2,0}+2c_{0,2}+c_{1,1}}$. It is clear that $c_1(x)$, $c_2(x)$ and $c_3(x)$ vanish at $x=x_0$.
\end{proof}

\section{Conditions of local solvability for operators of bounded perturbation type in the Colombeau context}
\label{sec_local}
Purpose of this section is to provide sufficient conditions of local solvability for an operator $P(x,D)=\sum_{|\alpha|\le m}c_\alpha(x)D^\alpha$ with coefficients in $\G(\R^n)$ which is of bounded perturbation type in a neighborhood $\Om$ of $x_0$, that is 
\[
P(x,D)=P_0(D)+\sum_{j=1}^r c_j(x)P_j(D)
\]
on $\Om$ with,
\begin{itemize}
\item[(h1)] for all $j=1,...,n$, $c_j\in\G(\Om)$, $c_j(x_0)=0$, $P_0(D)$ and $P_j(D)$ operators with constant Colombeau coefficients
\item[(h2)]$\wt{P_0}$ invertible in some point of $\R^n$,
\item[(h3)]$P_0(D)$ stronger than any $P_j(D)$.
\end{itemize}
This requires some specific properties of the spaces $B_{p,k}$ which are proven in \cite[Chapters X, XIII]{Hoermander:V2} and collected in the sequel.
\subsection{The spaces $B_{p,k}$: properties and calculus}
Given $k\in\mathcal{K}$ we define for any $\nu>0$ the functions
\beq
\label{def_k_delta}
k_\nu(\xi)=\sup_\eta \esp^{-\nu|\eta|}k(\xi-\eta)
\eeq
and
\[
M_k(\xi)=\sup_\eta k(\xi+\eta)/k(\eta).
\]
One easily proves that $k_\nu$ and $M_k$ are both tempered weight functions. More precisely there exists a constant $C_\nu>0$ such that for all $\xi\in\R^n$, 
\[
1\le k_\nu(\xi)/k(\xi)\le C_\nu.
\]
If $k(\xi+\eta)\le(1+C|\xi|)^Nk(\eta)$ then 
\[
M_{k_\nu}(\xi)\le (1+C|\xi|)^N,
\]
for all $\nu>0$. In particular $M_{k_\nu}\to 1$ uniformly on compact subsets of $\R^n$ when $\nu\to 0$. The following theorem collects some important properties of the spaces $B_{p,k}$ which are proven in \cite[Chapters X, XIII]{Hoermander:V2}.

\begin{theorem}
\label{theo_Bpk}
\leavevmode
\begin{itemize}
\item[(i)] If $u_1\in B_{p,k_1}\cap\E'$ and $u_2\in B_{\infty,k_2}$ then $u_1\ast u_2\in B_{p,k_1k_2}$ and 
\[
\Vert u_1\ast u_2\Vert_{p,k_1k_2} \le \Vert u_1\Vert_{p,k_1}\, \Vert u_2\Vert_{\infty,k_2}.
\]
\item[(ii)] If $u\in B_{p,k}$ and $\phi\in\S(\R^n)$ then $\phi u\in B_{p,k}$ and
\[
\Vert \phi u\Vert_{p,k}\le \Vert\phi\Vert_{1,M_k}\, \Vert u\Vert_{p,k}.
\]
\item[(iii)] For every $\phi\in\S(\R^n)$ there exists $\nu_0>0$ such that
\[
\Vert \phi u\Vert_{p,k_{\nu}}\le 2\Vert \phi\Vert_{1,1}\Vert u\Vert_{p,k_{\nu}}
\]
for all $\nu\in(0,\nu_0)$.
\item[(iv)] If $\psi\in\Cinfc(\R^n)$, $x_0\in\R^n$ and $h$ is a $\Cinf$-function with $h(x_0)=0$ then for $\psi_{\delta,x_0}(x)=\psi((x-x_0)/\delta)$ one has 
\[
\Vert \psi_{\delta,x_0}h\Vert_{1,1}=O(\delta)
\]
as $\delta\to 0$.
\end{itemize}
\end{theorem}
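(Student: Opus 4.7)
My plan is to exploit the isometric identification between $B_{p,k}$ and a weighted $L^p$-space given by $w \mapsto k\widehat{w}$; all four assertions then reduce to Fourier-analytic manipulations. For (i), since $u_1 \in \E'$ makes $\widehat{u_1}$ a smooth function of polynomial growth, the convolution identity $\widehat{u_1 \ast u_2} = \widehat{u_1}\widehat{u_2}$ holds pointwise; the bound $|k_1 k_2 \widehat{u_1}\widehat{u_2}|\le |k_1\widehat{u_1}|\cdot\|k_2\widehat{u_2}\|_\infty$ followed by taking the $L^p$-norm yields the estimate (modulo the normalizing constants in the definition of $\|\cdot\|_{p,k}$).

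For (ii), I start from $\widehat{\phi u} = (2\pi)^{-n}\widehat\phi \ast \widehat u$ and use the pointwise inequality $k(\xi)\le M_k(\xi-\eta) k(\eta)$, which is just the definition of $M_k$, to dominate $k|\widehat{\phi u}|$ by a convolution of $M_k|\widehat\phi|$ with $k|\widehat u|$. Young's convolution inequality ($L^1\ast L^p\subset L^p$) then produces the stated bound.

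For (iii), applying (ii) with the weight $k_\nu$ reduces the task to showing $\|\phi\|_{1,M_{k_\nu}}\le 2\|\phi\|_{1,1}$ for all sufficiently small $\nu$. A shift of integration variable in~\eqref{def_k_delta} yields $M_{k_\nu}(\zeta)\le \esp^{\nu|\zeta|}$, so $M_{k_\nu}\to 1$ pointwise as $\nu\to 0^+$; combined with the $\nu$-uniform bound $M_{k_\nu}(\zeta)\le (1+C|\zeta|)^N$ recalled just before the statement, dominated convergence with integrable majorant $(1+C|\cdot|)^N|\widehat\phi|$ gives $\|\phi\|_{1,M_{k_\nu}}\to\|\phi\|_{1,1}$, producing the threshold $\nu_0$.

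For (iv), which is the main obstacle since one has to actually \emph{extract} the factor $\delta$, I substitute $y=(x-x_0)/\delta$ in the defining Fourier integral and exploit $h(x_0)=0$ via Taylor's formula to write
\[
h(x_0+\delta y) = \delta\int_0^1 \nabla h(x_0+t\delta y)\cdot y\, dt =: \delta\, H(\delta,y),
\]
where $H$ is smooth and uniformly bounded on $\supp\psi\times [0,\delta_0]$. This expresses $\widehat{\psi_{\delta,x_0}h}(\xi)$ as $\esp^{-ix_0\xi}\delta^{n+1}\widehat{g_\delta}(\delta\xi)$ with $g_\delta=\psi\, H(\delta,\cdot)\in\Cinfc(\R^n)$; the substitution $\eta=\delta\xi$ in the $L^1$-integral absorbs $\delta^n$ into the Jacobian and leaves exactly one factor of $\delta$. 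Since smoothness and uniform compact support make $\|\widehat{g_\delta}\|_1$ bounded as $\delta\to 0^+$, the $O(\delta)$ conclusion follows.
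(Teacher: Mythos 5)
The paper does not actually prove Theorem \ref{theo_Bpk}: it is stated as a collection of facts imported from H\"ormander (Chapters X and XIII of Volume II), so there is no in-paper argument to measure yours against. Your proposal is a correct self-contained reconstruction, and it follows the same canonical route as the cited source: (i) and (ii) are the standard Fourier-side computations ($\widehat{u_1\ast u_2}=\widehat{u_1}\widehat{u_2}$ with $\widehat{u_1}$ a slowly increasing smooth function because $u_1\in\E'$, and $k(\xi)\le M_k(\xi-\eta)k(\eta)$ plus Young's inequality for $\widehat{\phi u}=(2\pi)^{-n}\widehat{\phi}\ast\widehat{u}$); (iii) correctly reduces to $\Vert\phi\Vert_{1,M_{k_\nu}}\le 2\Vert\phi\Vert_{1,1}$, and your bound $M_{k_\nu}(\zeta)\le \esp^{\nu|\zeta|}$ (immediate from the definition \eqref{def_k_delta}) together with the $\nu$-uniform majorant $(1+C|\zeta|)^N|\widehat{\phi}(\zeta)|$ and dominated convergence gives $\limsup_{\nu\to 0}\Vert\phi\Vert_{1,M_{k_\nu}}\le\Vert\phi\Vert_{1,1}$, which is all that is needed (the case $\phi=0$ being trivial); (iv) is exactly H\"ormander's scaling argument, and the one point you should make explicit is that $\Vert\widehat{g_\delta}\Vert_1$ is bounded uniformly in $\delta$ because $g_\delta$ has support in the fixed compact set $\supp\psi$ and its derivatives up to order $n+1$ are bounded uniformly for $\delta\in(0,\delta_0]$, since the arguments $x_0+t\delta y$ stay in a fixed compact set. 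The only blemish is the one you already flag: with the paper's literal normalization $\Vert w\Vert_{p,k}=(2\pi)^{-n}\Vert k\widehat{w}\Vert_p$ the inequalities in (i) and (ii) acquire stray powers of $2\pi$; they are exact for H\"ormander's normalization $\Vert w\Vert_{p,k}=\bigl((2\pi)^{-n}\int|k\widehat{w}|^p\,d\xi\bigr)^{1/p}$, which is evidently what is intended. This does not affect any of the applications in the paper, where only the qualitative form of the bounds is used.
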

Note that the number $\nu_0$ in $(iii)$ depends on $\phi$ and the weight function $k$ while the norm of the operator $u\to\phi u$ does not depend on $k$.

The next proposition concerns nets of distributions and nets of $B_{p,k}$-elements.
\begin{proposition}
\label{prop_basic}
\leavevmode
\begin{itemize}
\item[(i)] If $(g_\eps)_\eps\in\E'(\R^n)^{(0,1]}$ generates a basic functional $\LL(\G(\R^n),\wt{\C})$ then for all $p\in[1,+\infty]$ there exists $k\in\mathcal{K}$ such that $(g_\eps)_\eps\in\mM_{B_{p,k}(\R^n)}$; in particular if $(g_\eps)_\eps$ is the representative of a generalized function in $\Gc(\R^n)$ then $(g_\eps)_\eps\in\mM_{B_{p,k}(\R^n)}$ for all $k$.
\item[(ii)] If $(g_\eps)_\eps\in\E'(\R^n)^{(0,1]}$ with $\supp\, g_\eps\subseteq K\Subset\R^n$ for all $\eps$ and $(g_\eps)_\eps\in\mM_{B_{p,k}(\R^n)}$ then $(g_\eps)_\eps$ generates a basic functional in $\LL(\G(\R^n),\wt{\C})$.
\item[(iii)] If $(g_\eps)_\eps\in\mM_{B_{p,k}(\R^n)}$ and $\lara{\xi}^j k^{-1}(\xi)\in L^{q}(\R^n)$ with $1/p+1/q=1$ then $(g_\eps)_\eps\in\mM_{\mathcal{C}^j(\R^n)}$.
\item[(iv)] If $(S_\eps)_\eps$ and $(T_\eps)_\eps$ generate basic functionals in $\LL(\G(\R^n),\wt{\C})$ then $(S_\eps\ast T_\eps)_\eps$ defines a basic functional too.
\end{itemize}
\end{proposition}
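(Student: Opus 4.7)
My plan is to treat the four items in order, using the Fourier transform of compactly supported distributions as the central tool.

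For $(i)$, I would first unpack the basic-functional estimate: there exist $K\Subset\R^n$, $j,N\in\N$ and $c>0$ with $|\langle g_\eps,f\rangle|\le c\eps^{-N}\sup_{|\alpha|\le j,\, x\in K}|\partial^\alpha f(x)|$ for small $\eps$. Testing with $\chi(\cdot)\esp^{-i\,\cdot\,\xi}$ for a fixed $\chi\in\Cinfc(\R^n)$ identically $1$ near $K$ and applying Leibniz yields $|\widehat{g_\eps}(\xi)|\le c'\eps^{-N}\lara{\xi}^j$ for all $\xi$. Choosing $k(\xi)=\lara{\xi}^{-s}\in\mathcal K$ with $s>j+n/p$ (and $s>j$ when $p=\infty$) then gives $\|k\widehat{g_\eps}\|_p=O(\eps^{-N})$, so $(g_\eps)_\eps\in\mM_{B_{p,k}(\R^n)}$. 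For the $\Gc(\R^n)$ case, moderateness of every derivative $\partial^\beta g_\eps$ on a fixed compact combined with integration by parts shows that for every $\beta$ one has $|\widehat{g_\eps}(\xi)|=O(\eps^{-N_\beta}\lara{\xi}^{-|\beta|})$; since every $k\in\mathcal K$ is polynomially bounded, $(g_\eps)_\eps\in\mM_{B_{p,k}(\R^n)}$ for any such $k$.

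For $(ii)$, the idea is to reduce the pairing $\langle g_\eps,f\rangle$ (with $f\in\Cinf(\R^n)$) to the pairing against $\chi f$, where $\chi\in\Cinfc(\R^n)$ is a fixed cut-off identically $1$ on a neighborhood of $K$ and supported in some $K'\Subset\R^n$. Parseval gives
\[
|\langle g_\eps, f\rangle|=|\langle g_\eps,\chi f\rangle|\le (2\pi)^{-n}\|k\widehat{g_\eps}\|_p\,\Bigl\|\widehat{\chi f}(-\cdot)/k\Bigr\|_q,
\]
and the last factor is bounded, via integration by parts, by a constant (depending on $\chi$ and $k$ but not on $f$ or $\eps$) times $\sup_{|\alpha|\le j,\, x\in K'}|\partial^\alpha f(x)|$, once $j$ exceeds the tempered-weight exponent of $k$ plus $n/q$. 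This is exactly the basic-functional estimate, and moderateness of $\|k\widehat{g_\eps}\|_p$ supplies the factor $\eps^{-N}$.

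For $(iii)$, Fourier inversion together with H\"older's inequality immediately yield, for $|\alpha|\le j$,
\[
\sup_{x\in\R^n}|\partial^\alpha g_\eps(x)|\le (2\pi)^{-n}\int\lara{\xi}^j|\widehat{g_\eps}(\xi)|\,d\xi\le (2\pi)^{-n}\|k\widehat{g_\eps}\|_p\,\bigl\|\lara{\cdot}^j/k\bigr\|_q=O(\eps^{-N}),
\]
so $(g_\eps)_\eps\in\mM_{\mathcal C^j(\R^n)}$. For $(iv)$, $S_\eps\ast T_\eps\in\E'(\R^n)$ has support in the fixed compact $K_1+K_2$. By $(i)$ one picks $k_1\in\mathcal K$ with $(S_\eps)_\eps\in\mM_{B_{p,k_1}}$ and $k_2\in\mathcal K$ with $(T_\eps)_\eps\in\mM_{B_{\infty,k_2}}$. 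Theorem \ref{theo_Bpk}$(i)$ applied $\eps$-wise gives $\|S_\eps\ast T_\eps\|_{p,k_1 k_2}\le\|S_\eps\|_{p,k_1}\|T_\eps\|_{\infty, k_2}=O(\eps^{-N})$, and $k_1 k_2$ is again a tempered weight (the product of tempered weights is tempered); the conclusion then follows from $(ii)$. I expect the main technical obstacle to lie in $(ii)$: one must keep the constant in front of $\sup_{|\alpha|\le j}|\partial^\alpha f|$ independent of $f$ while ensuring $\|k\widehat{g_\eps}\|_p$ remains moderate, which forces an a priori choice of $j$ as a function of $p$, $n$ and the tempered exponent of $k$.
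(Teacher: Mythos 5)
Your proof is correct, and parts $(i)$--$(iii)$ follow essentially the same route as the paper: pairing $g_\eps$ with (a cut-off of) $\esp^{-ix\xi}$ to get the polynomial bound $|\widehat{g_\eps}(\xi)|\le c_\eps\lara{\xi}^j$ and choosing $k$ a suitable negative power of $\lara{\xi}$ for $(i)$; Parseval plus H\"older with the weights $k$, $k^{-1}$ and the lower bound $k(\xi)\ge k(0)(1+C|\xi|)^{-N}$ for $(ii)$; Fourier inversion plus H\"older for $(iii)$. (The paper disposes of the $\Gc$ case of $(i)$ even more quickly by noting that $(\widehat{g_\eps})_\eps$ is a moderate net in $\S(\R^n)$, but your integration-by-parts argument gives the same conclusion.) The one genuine divergence is $(iv)$: the paper proves it directly from the tensor-product formula $S_\eps\ast T_\eps(f)=S_{\eps,x}T_{\eps,y}(f(x+y))$, applying the two basic estimates in succession to obtain a basic functional of order $m+m'$ supported in $K+K'$, with no reference to the $B_{p,k}$ machinery at all. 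You instead route through parts $(i)$ and $(ii)$ together with the convolution inequality of Theorem \ref{theo_Bpk}$(i)$, using that the product of tempered weights is again tempered and that $\supp(S_\eps\ast T_\eps)\subseteq K_1+K_2$ uniformly for small $\eps$. Both are valid; the paper's argument is shorter and self-contained, while yours exhibits $(iv)$ as a corollary of the $B_{p,k}$ calculus already set up, at the modest cost of having to track supports and invoke $(ii)$. Your closing remark correctly identifies the only delicate point in $(ii)$, namely that the order $j$ of the resulting basic estimate must be fixed in advance in terms of $n$, $q$ and the tempered exponent of $k$, independently of $f$ and $\eps$.
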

\begin{proof}
$(i)$ The definition of a basic functional implies that there exists a moderate net $(c_\eps)_\eps$, a compact set $K\Subset\R^n$ and a number $m\in\N$ such that the estimate
\[
|\widehat{g_\eps}(\xi)|=|g_\eps(\esp^{-i\cdot\xi})|\le c_\eps\sup_{x\in K, |\alpha|\le m}|\partial^\alpha_x\esp^{-ix\xi}|\le c_\eps\lara{\xi}^m
\]
holds for all $\eps\in(0,1]$ and $\xi\in\R^n$. For $k(\xi):=\lara{\xi}^{\frac{-mp-n-1}{p}}$ we obtain
\[
\Vert g_\eps\Vert_{p,k}^p=\int_{\R^n}\lara{\xi}^{-mp-n-1}|\widehat{g_\eps}(\xi)|^p\, \dslash\xi \le {c'_\eps}\int_{\R^n}\lara{\xi}^{-mp-n-1}\lara{\xi}^{mp}\, d\xi.
\]
Thus $(g_\eps)_\eps\in\mM_{B_{p,k}(\R^n)}$. The second assertion of $(i)$ is clear since if $(g_\eps)_\eps$ is the representative of a generalized function in $\Gc(\R^n)$ then $(\widehat{g_\eps})_\eps$ is a moderate net of functions in $\S(\R^n)$ and clearly a moderate net of $B_{p,k}$-functions for all $k$.

$(ii)$ Let $f\in\Cinf(\R^n)$ and $\psi$ be a cut-off function identically $1$ in a neighborhood of $K$. We can write $g_\eps(f)$ as $\widehat{g_\eps}((\psi f)\check\,)=k\widehat{g_\eps}(k^{-1}(\psi f)\check\,)$. Hence,  
\[
|g_\eps(f)|\le \Vert k\widehat{g_\eps}\Vert_p\Vert k^{-1}(\psi f)\check\,\Vert_q,
\]
with $1/p+1/q=1$. By choosing $h$ large enough, using the bound from below $k(\xi)\ge k(0)(1+C|\xi|)^{-N}$ and the continuity of the inverse Fourier transform on $\S(\R^n)$ we are led to
\[
|g_\eps(f)|\le c\Vert k\widehat{g_\eps}\Vert_p\, \sup_{\xi\in\R^n, |\alpha|\le h}\lara{\xi}^h|\partial^\alpha((\psi f)\check\,)|
\]
and, for some $h'\in\N$ and $c'>0$, to
\[
|g_\eps(f)|\le c'\Vert k\widehat{g_\eps}\Vert_p\sup_{x\in \supp\,\psi, |\beta|\le h'}|\partial^\beta f(x)|.
\]
Since $(g_\eps)_\eps$ is $B_{p,k}$-moderate it follows that $(g_\eps)_\eps$ defines a basic functional in $\LL(\G(\R^n),\wt{\C})$.

$(iii)$ From the hypothesis $\lara{\xi}^j k^{-1}(\xi)\in L^{q}(\R^n)$ and $(g_\eps)_\eps\in\mM_{B_{p,k}(\R^n)}$ it follows that $(\xi^\alpha \widehat{g_\eps})_\eps\in\mM_{L^1(\R^n)}$ for all $\alpha$ with $|\alpha|\le j$. Therefore, $g_\eps(x)=\int_{\R^n}\esp^{ix\xi}\widehat{g_\eps}(\xi)\, \dslash\xi$ is a moderate net of $\mathcal{C}^j$-functions.

$(iv)$ Combining the property of basic functionals with the definition of convolution we get
\begin{multline*}
|S_\eps\ast T_\eps(f)|=|S_{\eps,x}T_{\eps,y}(f(x+y))|\le c_\eps\sup_{x\in K, |\alpha|\le m}|\partial^\alpha_x(T_{\eps,y}(f(x+y)))|=c_\eps\sup_{x\in K, |\alpha|\le m}|T_{\eps,y}(\partial^\alpha f(x+y))|\\
\le c_\eps\sup_{x\in K, |\alpha|\le m}c'_\eps\sup_{y\in K', |\beta|\le m'}|\partial^{\alpha+\beta}f(x+y)|\le c_\eps c'_\eps\sup_{z\in K+K', |\gamma|\le m+m'}|\partial^\gamma f(z)|,
\end{multline*}
valid for all $f\in\Cinf(\R^n)$ and for all $\eps\in(0,1]$ with $(c_\eps)_\eps$ and $(c'_\eps)_\eps$ moderate nets.
\end{proof}
In the course of the paper we will use the expression basic functional $T\in\LL(\G(\R^n),\wt{\C})$ of order $N$ for a functional $T$ defined by a net of distributions $(T_\eps)_\eps\in\E'(\R^n)^{(0,1]}$ such that 
\[
|T_\eps(f)|\le \lambda_\eps\sup_{x\in K\Subset\R^n, |\alpha|\le N}|\partial^\alpha f(x)|
\]
holds for all $f\in\Cinf(\R^n)$, for all $\eps\in(0,1]$ and for some moderate net $(\lambda_\eps)_\eps$. It follows from Proposition \ref{prop_basic}$(i)$ that $(T_\eps)_\eps$ is a moderate net in $B_{p,k}(\R^n)^{(0,1]}$ with $k(\xi)=\lara{\xi}^{\frac{-Np-n-1}{p}}$.

In our investigation of the local solvability of $P(x,D)$ we distinguish between 
\begin{enumerate}
\item[(1)] the coefficients $c_j$ are standard smooth functions,
\item[(2)] the coefficients $c_j$ are Colombeau generalized functions.
\end{enumerate}
In both these cases we will adapt the proof of Theorem 13.3.3. in \cite{Hoermander:V2} (or Theorem 7.3.1 in \cite{Hoermander:63}) to our generalized context. From the assumption of BP-type in $\Om$ it is not restrictive in the following statements to take $P(x,D)=P_0(D)+\sum_{j=1}^r c_j(x)P_j(D)$ with coefficients $c_j\in\G(\Om)$. 
\subsection{Local solvability: case $c_j\in\Cinf$.}
\begin{theorem}
\label{theo_locsolv_easy}
Let $\Om$ be a neighborhood of $x_0$ and let $P(x,D)=P_0(D)+\sum_{j=1}^r c_j(x)P_j(D)$ with $c_j\in\Cinf(\Om)$ for all $j$. If the hypotheses $(h1)$, $(h2)$, $(h3)$ are fulfilled and in addition  
\begin{itemize}
\item[(h4)] $\wt{P_{j,\eps}}(\xi)\le\lambda_{j,\eps}\wt{P_{0,\eps}}(\xi)$, where $\lambda_{j,\eps}=O(1)$
\end{itemize} 
holds for all $j$ and for a certain choice of representatives, then there exists a sufficiently small neighborhood $\Om_{\delta}:=\{x:\, |x-x_0|<\delta\}$ of $x_0$ such that
\begin{itemize}
\item[(i)] for all $F\in\LLb(\G(\R^n),\wt{\C})$ there exists $T\in\LLb(\G(\R^n),\wt{\C})$ solving $P(x,D)T=F$ on $\Om_\delta$,
\item[(ii)] for all $v\in\Gc(\R^n)$ there exists $u\in\Gc(\R^n)$ solving $P(x,D)u=v$ on $\Om_{\delta}$.
\end{itemize}
\end{theorem}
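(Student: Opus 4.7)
The plan is to adapt H\"ormander's Neumann-series argument for operators of constant strength (\cite[Theorem 13.3.3]{Hoermander:V2}) to the Colombeau setting, carrying out everything $\eps$-wise and extracting uniformity from the hypothesis $\lambda_{j,\eps}=O(1)$. I would first fix a cut-off $\psi\in\Cinfc(\R^n)$ identically $1$ on $\Om_\delta=\{|x-x_0|<\delta\}$ and supported in a slightly larger ball, set $\tilde c_j:=\psi c_j$, and use $c_j(x_0)=0$ together with Theorem~\ref{theo_Bpk}$(iv)$ to get $\|\tilde c_j\|_{1,1}=O(\delta)$ as $\delta\to 0$. Next, with $(E_\eps)_\eps$ a representative of the fundamental solution of $P_0(D)$ supplied by Theorem~\ref{theo_fund_P} (so $E_\eps/\cosh(c|x|)$ is uniformly bounded in $B_{\infty,\wt{P_{0,\eps}}}$), the ansatz $T_\eps:=\varphi(E_\eps\ast g_\eps)$, with $\varphi\in\Cinfc(\R^n)$ identically $1$ on $\Om_\delta$ and $g_\eps$ compactly supported, would turn the requirement $P(x,D)T_\eps=F_\eps$ on $\Om_\delta$ into the global fixed-point problem
\[
(I+K_\eps)g_\eps=F_\eps,\qquad K_\eps g:=\sum_{j=1}^r \tilde c_j\,P_j(D)(E_\eps\ast g),
\]
since $P_0(D)(E_\eps\ast g_\eps)=g_\eps$ and $\tilde c_j$ agrees with $c_j$ on $\Om_\delta$.

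The heart of the argument is the contraction estimate $\|K_\eps\|_{B_{p,k_\nu}\to B_{p,k_\nu}}\le C\delta$, uniform in $\eps$, for a suitable tempered weight $k$ and $\nu>0$ small enough depending only on the $\tilde c_j$. I would pick a cut-off $\chi\in\Cinfc(\R^n)$ equal to $1$ on a fixed compact containing all relevant differences $x-y$ ($x\in\supp\tilde c_j$, $y\in\supp g$), so that $\tilde c_j(P_j(D)E_\eps\ast g)=\tilde c_j(P_j(D)(\chi E_\eps)\ast g)$. Writing $\chi E_\eps=(\chi\cosh(c|\cdot|))(E_\eps/\cosh(c|\cdot|))$, Theorem~\ref{theo_Bpk}$(ii)$ together with $M_{\wt{P_{0,\eps}}}(\xi)\le(1+C|\xi|)^m$ from \eqref{est_Hoer} gives $\|\chi E_\eps\|_{\infty,\wt{P_{0,\eps}}}\le C_1$ independent of $\eps$; Theorem~\ref{theo_Bpk}$(i)$ then yields $\chi E_\eps\ast g\in B_{p,k_\nu\wt{P_{0,\eps}}}$; the trivial bound $\|P_j(D)u\|_{p,h/\wt{P_{j,\eps}}}\le\|u\|_{p,h}$ (valid for any tempered weight $h$), combined with $\wt{P_{j,\eps}}\le\lambda_{j,\eps}\wt{P_{0,\eps}}$ and $\lambda_{j,\eps}=O(1)$, gives $\|P_j(D)(\chi E_\eps\ast g)\|_{p,k_\nu}\le C_2\|g\|_{p,k_\nu}$ uniformly in $\eps$; and Theorem~\ref{theo_Bpk}$(iii)$ finally turns multiplication by $\tilde c_j$ into a factor $2\|\tilde c_j\|_{1,1}=O(\delta)$, provided $\nu$ lies below the ($k$-independent) threshold of Theorem~\ref{theo_Bpk}$(iii)$ associated to each $\tilde c_j$.

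Choosing $\delta$ so that $C\delta<1/2$ makes $I+K_\eps$ invertible on $B_{p,k_\nu}(\R^n)$ by Neumann series, with $\|(I+K_\eps)^{-1}\|\le 2$ uniformly in $\eps$. By Proposition~\ref{prop_basic}$(i)$ a representative $(F_\eps)_\eps$ of $F\in\LLb(\G(\R^n),\wt{\C})$ is $B_{p,k_\nu}$-moderate for some $k$, so $g_\eps:=(I+K_\eps)^{-1}F_\eps$ is moderate in $B_{p,k_\nu}$ and, via $g_\eps=F_\eps-K_\eps g_\eps$, has compact support in $\supp\psi\cup\supp F_\eps$. The compactly supported net $T_\eps:=\varphi(E_\eps\ast g_\eps)$ is $B_{p,k'}$-moderate for a suitable $k'$ by a further application of Theorem~\ref{theo_Bpk}$(i),(ii)$, and Proposition~\ref{prop_basic}$(ii)$ promotes it to the required $T\in\LLb(\G(\R^n),\wt{\C})$; by construction $P(x,D)T_\eps=g_\eps+K_\eps g_\eps=F_\eps$ on $\Om_\delta$, giving (i). For (ii), if $v\in\Gc(\R^n)$ then $v_\eps\in\Cinfc(\R^n)$ is moderate in $B_{p,k_\nu}$ for \emph{every} tempered weight $k$, and the same Neumann series (with the same $\delta,\nu$, since the contraction constant does not depend on $k$) yields $g_\eps$ with the same property; Proposition~\ref{prop_basic}$(iii)$ then places $g_\eps$ in $\EM(\R^n)\cap\E'(\R^n)$, and since convolution of a distribution with a smooth compactly supported function is smooth, $T_\eps\in\Cinfc(\R^n)$ is $\EM$-moderate, yielding $u=[(T_\eps)_\eps]\in\Gc(\R^n)$.

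The main obstacle I anticipate is the uniform-in-$\eps$ control of the contraction constant: both $E_\eps$ and $\wt{P_{0,\eps}}$ depend on $\eps$, so every intermediate estimate must be traced to constants that are either absolute or $O(1)$ as $\eps\to 0$. Without the reinforced comparison $(h4)$, i.e.\ $\lambda_{j,\eps}=O(1)$ rather than merely moderate, the contraction constant would blow up as $\eps\to 0$ and no single $\delta$ could serve for all $\eps\in(0,1]$; this is exactly why the stronger hypothesis is imposed in the statement of the theorem.
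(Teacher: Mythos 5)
Your proposal is correct and follows essentially the same route as the paper's proof: fundamental solution of $P_0(D)$ truncated by a cut-off to get a uniformly bounded $B_{\infty,\wt{P_{0,\eps}}}$ kernel, the perturbation recast as an operator on $B_{p,k_\nu}$ with norm $O(\delta)$ uniformly in $\eps$ via Theorem \ref{theo_Bpk}$(i)$--$(iv)$ and $(h4)$, a Neumann series inversion, and the observation that the contraction constant is independent of $k$ so that part $(ii)$ follows by letting $k$ range over all tempered weights and invoking Proposition \ref{prop_basic}$(iii)$. The only slip is immaterial: in Theorem \ref{theo_Bpk}$(iii)$ it is the operator-norm bound $2\Vert\phi\Vert_{1,1}$, not the threshold $\nu_0$, that is independent of $k$, but since $\Vert\cdot\Vert_{p,k_\nu}$ is equivalent to $\Vert\cdot\Vert_{p,k}$ this is exactly what your argument actually uses.
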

\begin{proof}
We organize the proof in few steps.

\bf{Step 1: the operator $P_0(D)$}\rm\\
Since $\wt{P_0}$ is invertible in some point of $\R^n$ from Theorem \ref{theo_fund_P} we know that there exists a fundamental solution $E_0\in\LLb(\Gc(\R^n),\wt{\C})$ having a representative $(E_{0,\eps})_\eps$ such that
\[
\biggl\Vert \frac{E_{0,\eps}}{\cosh(c|x|)}\biggr\Vert_{\infty,\wt{P_{0,\eps}}}\le C_0,
\]
for all $\eps\in(0,1]$. Note that the constant $C_0$ does not depend on $\eps$. It follows from Theorem \ref{theo_Bpk}$(ii)$, the inequality \eqref{est_Hoer} and the definition of $M_{\wt{P_{0,\eps}}}$ the estimate
\begin{multline*}
\Vert\varphi E_{0,\eps}\Vert_{\infty,\wt{P_{0,\eps}}}\le\Vert\varphi\cosh(c|x|)\Vert_{1,M_{\wt{P_{0,\eps}}}}\biggl\Vert \frac{E_{0,\eps}}{\cosh(c|x|)}\biggr\Vert_{\infty,\wt{P_{0,\eps}}}\\
\le \Vert (1+C|\xi|)^m\mathcal{F}(\varphi\cosh(c|\cdot|))(\xi)\Vert_1\, \cdot C_0\le C_1
\end{multline*}
valid for all $\varphi\in\Cinfc(\R^n)$ and $\eps\in(0,1]$ with $m$ order of the polynomial $P_0$.

\bf{Step 2: the equation $P_{0,\eps}(D)u=f$ when $f\in\E'(\R^n)$ with $\supp\, f\subseteq \Om_{\delta_0}$}\rm\\
Let $\delta_0>0$ and let $\chi$ be a function in $\Cinfc(\R^n)$ identically $1$ in a neighborhood of $\{x:\, |x|\le 2\delta_0\}$. From the previous considerations we have that $F_{0,\eps}:=\chi E_{0,\eps}\in B_{\infty,\wt{P_{0,\eps}}}$ with $\Vert F_{0,\eps}\Vert_{\infty,\wt{P_{0,\eps}}}\le C_1$ for all $\eps$. Moreover, for all $f\in\E'(\R^n)$ with $\supp\, f\subseteq \Om_{\delta_0}$ we have that
\[
E_{0,\eps}\ast f = F_{0,\eps}\ast f
\]
on $\Om_{\delta_0}$. Hence, by definition of a fundamental solution of $P_0(D)$, we can write on $\Om_{\delta_0}$,
\beq
\label{first_part}
P_{0,\eps}(D)(F_{0,\eps}\ast f)=f.
\eeq
\bf{Step 3: the operator $\sum_{j=1}^r c_j(x)P_j(D)$ on $\Om_{\delta}\subseteq\Om_{\delta_0}$}\rm\\
We now study the operator 
\[
\sum_{j=1}^r c_j(x)P_j(D).
\]
Let $\psi\in\Cinfc(\R^n)$ such that $\psi(x)=1$ when $|x|\le 1$ and $\psi(x)=0$ when $|x|\ge 2$. We set $\psi_{\delta,x_0}(x)=\psi((x-x_0)/\delta)$. We fixed the representatives $(\wt{P_{0,\eps}})_\eps$, $(\wt{P_{j,\eps}})_\eps$ and $(\lambda_{j,\eps})_\eps$ fulfilling $(h4)$ and we study the net of operators
\[
A_{\delta,\eps}(g)=\sum_{j=1}^r \psi_{\delta,x_0}c_j P_{j,\eps}(D)(F_{0,\eps}\ast g)
\]
defined for $g\in\D'(\R^n)$. More precisely for $k\in\mathcal{K}$, $1\le p\le\infty$ and $k_\nu$ as in \eqref{def_k_delta} we want to estimate $A_{\delta,\eps}$ on $B_{p,k_\nu}$. Since $\psi_{\delta,x_0}c_j$ belongs to $\S(\R^n)$ by Theorem \ref{theo_Bpk}$(iii)$ we find a sufficiently small $\nu_\delta$, depending on the coefficients $c_j$ and on $\psi_{\delta,x_0}$, such that 
\[
\Vert \psi_{\delta,x_0}c_j P_{j,\eps}(D)(F_{0,\eps}\ast g)\Vert_{p,k_\nu}\le 2\Vert\psi_{\delta,x_0}c_j\Vert_{1,1}\, \Vert P_{j,\eps}(D)(F_{0,\eps}\ast g)\Vert_{p,k_\nu} 
\]
holds for all $\nu<\nu_\delta$. Hence, from Theorem \ref{theo_Bpk}$(i)$, the properties of the net $(F_{0,\eps})_\eps$ and $(h4)$ we have
\begin{multline*}
\Vert A_{\delta,\eps}(g)\Vert_{p,k_\nu}\le \sum_{j=1}^r 2\Vert\psi_{\delta,x_0}c_j\Vert_{1,1}\, \Vert P_{j,\eps}(D)(F_{0,\eps}\ast g)\Vert_{p,k_\nu}\le \sum_{j=1}^r 2\Vert\psi_{\delta,x_0}c_j\Vert_{1,1}\Vert P_{j,\eps}(D)F_{0,\eps}\Vert_{\infty,1}\Vert g\Vert_{p,k_\nu}\\
=\sum_{j=1}^r 2\Vert\psi_{\delta,x_0}c_j\Vert_{1,1}\Vert P_{j,\eps}\widehat{F_{0,\eps}}\Vert_{\infty}\Vert g\Vert_{p,k_\nu}\le \sum_{j=1}^r 2\Vert\psi_{\delta,x_0}c_j\Vert_{1,1}\Vert \wt{P_{j,\eps}}\widehat{F_{0,\eps}}\Vert_{\infty}\Vert g\Vert_{p,k_\nu}\\
\le 2\sum_{j=1}^r\Vert\psi_{\delta,x_0}c_j\Vert_{1,1}\,\lambda_{j,\eps}\,\Vert F_{0,\eps}\Vert_{\infty,\wt{P_{0,\eps}}}\Vert g\Vert_{p,k_\nu}\le 2C_1\sum_{j=1}^r\Vert\psi_{\delta,x_0}c_j\Vert_{1,1}\,\lambda_{j,\eps}\, \Vert g\Vert_{p,k_\nu}
\end{multline*}
Since $c_j(x_0)=0$ the assumptions of Theorem \ref{theo_Bpk}$(iv)$ are satisfied. Hence, $\Vert\psi_{\delta,x_0}c_j\Vert_{1,1}=O(\delta)$. Combining this fact with $|\lambda_{j,\eps}|=O(1)$ we conclude that there exist $\delta_1$ and $\epsilon_1$ small enough such that 
\[
\Vert A_{\delta,\eps}(g)\Vert_{p,k_\nu}\le 2^{-1}\Vert g\Vert_{p,k_\nu}
\]
is valid for all $\delta<\delta_1$, for all $\nu<\nu_\delta$, for all $g\in B_{p,k_\nu}(\R^n)$ and for all $\eps\in(0,\epsilon_1)$. Since $B_{p,k}=B_{p,k_\nu}$ it follows that for all $f\in B_{p,k}(\R^n)$ there exists a unique solution $(g_\eps)_{\eps\in(0,\eps_1)}$ with $g_\eps\in B_{p,k}(\R^n)$ of the equation
\[
g+A_{\delta,\eps}(g)=\psi_{\delta,x_0}f
\]
for $\eps\in(0,\eps_1)$. Note that both $\delta_1$ and $\eps_1$ do not depend on the weight function $k$ and that this is possible thanks to the equivalent norm $\Vert\cdot\Vert_{p,k_\nu}$.  

\bf{Step 4: the equation $P(x,D)T=F$ on $\Om_\delta$ with $F\in\LLb(\G(\R^n),\wt{\C})$}\rm\\
Let $(F_\eps)_\eps$ be a net in $\E'(\R^n)^{(0,1]}$ which defines $F$. By Proposition \ref{prop_basic}$(i)$ we know that $(F_\eps)_\eps\in \mM_{B_{p,k}(\R^n)}$ for some $k$. Let us take $\Om_\delta$ with $\delta<\delta_1<\delta_0$ such that the previous arguments are valid for $B_{p,k_\nu}=B_{p,k}$ with $\nu<\nu_\delta$. We study the equation at the level of representatives on $\Om_\delta$ which can be written as
\[
P_{0,\eps}(D)(T_\eps)+\sum_{j=1}^r \psi_{\delta,x_0}c_j P_{j,\eps}(D)T_\eps = \psi_{\delta,x_0}F_\eps.
\]
Let $(g_\eps)_\eps$ be the unique solution of the equation
\[
g_\eps+A_{\delta,\eps}(g_\eps)=\psi_{\delta,x_0}F_\eps
\]
on the interval $(0,\eps_1)$. We have that $(g_\eps)_\eps$ is $B_{p,k}$-moderate (for simplicity we can set $g_\eps=0$ for $\eps\in[\eps_1,1]$). Indeed,
\[
\Vert g_\eps\Vert_{p,k_\nu}\le \Vert A_{\delta,\eps}(g_\eps)\Vert_{p,k_\nu}+\Vert\psi_{\delta,x_0}F_\eps\Vert_{p,k_\nu}\le 2^{-1}\Vert g_\eps\Vert_{p,k_\nu}+\Vert\psi_{\delta,x_0}F_\eps\Vert_{p,k_\nu}
\] 
and
\beq
\label{est_g}
\Vert g_\eps\Vert_{p,k}\le c\Vert\psi_{\delta,x_0}F_\eps\Vert_{p,k}\le c\Vert\psi_{\delta,x_0}\Vert_{1,M_{k}}\Vert F_\eps\Vert_{p,k}\le \lambda_\eps, 
\eeq
with $(\lambda_\eps)_\eps\in\EM$. Since  $\supp\,{A_{\delta,\eps}(g)}\subseteq\supp\psi_{\delta,x_0}\subseteq \Om_{2\delta}$ for all $g\in\D'(\R^n)$ we conclude that $\supp\, g_\eps$ is contained in a compact set uniformly with respect to $\eps$ (and therefore $\supp\, g_\eps\subseteq\Om_{\delta_0}$ for some $\delta_0$). From Proposition \ref{prop_basic}$(ii)$ it follows that $(g_\eps)_\eps$ generates a basic functional in $\LL(\G(\R^n),\wt{\C})$. Let now 
\[
T_\eps= F_{0,\eps}\ast g_\eps.
\]
The fourth assertion of Proposition \ref{prop_basic} yields that the net $(T_\eps)_\eps$ defines $T\in\LLb(\G(\R^n),\wt{\C})$. By construction (steps 2 and 3) and for $\eps$ small enough, we have
\begin{multline*}
P_{0,\eps}(D)(T_\eps)|_{\Om_\delta}+\sum_{j=1}^r (\psi_{\delta,x_0}c_j P_{j,\eps}(D)T_\eps)|_{\Om_\delta} = P_{0,\eps}(D)(F_{0,\eps}\ast g_\eps)|_{\Om_\delta}+A_{\delta,\eps}(g_\eps)|_{\Om_\delta}\\
=g_\eps|_{\Om_\delta}+A_{\delta,\eps}(g_\eps)|_{\Om_\delta}=\psi_{\delta,x_0}F_\eps|_{\Om_\delta}=F_\eps|_{\Om_\delta}.
\end{multline*}
Hence $P(x,D)T|_{\Om_\delta}=F|_{\Om_\delta}$.

\bf{Step 5: the equation $P(x,D)u=v$ on $\Om_\delta$ with $v\in\Gc(\R^n)$}\rm\\
Let $(v_\eps)_\eps$ be a representative of $v$. By Proposition \ref{prop_basic}$(i)$ we know that we can work in the space $B_{p,k}(\R^n)$ with $k$ arbitrary. Moreover, the interval $(0,\eps_1)$ and the neighborhood $\Om_\delta$ do not depend on $k$. This means that we can write
\[
g_\eps+A_{\delta,\eps}(g_\eps)=\psi_{\delta,x_0}v_\eps,
\]
where, combining the moderateness of $\Vert g_\eps\Vert_{p,k}$ in \eqref{est_g} for any $k$ with Proposition \ref{prop_basic}$(iii)$, we have that $(g_\eps)_\eps\in \mM_{\Cinf(\R^n)}=\EM(\R^n)$. The convolution between a basic functional in $\LLb(\G(\R^n),\wt{\C})$ and a Colombeau generalized function in $\Gc(\R^n)$ gives a generalized function in $\Gc(\R^n)$ (see Propositions 1.12, 1.14 and Remark 1.16 in \cite{Garetto:06a}). Hence, $u$ with representative
\[
u_\eps:=F_{0,\eps}\ast g_\eps
\]
belongs to $\Gc(\R^n)$ and $P_\eps(x,D)u_\eps=\psi_{\delta,x_0}v_\eps =v_\eps$ on $\Om_\delta$ by construction.
\end{proof}
\begin{example}
On $\R^2$ we define the operator
\[
\eps^a D^2_x-\eps^bD^2_y+c_1(x,y)\eps^a D_x+c_2(x,y)\eps^b D_y+c_3(x,y)\eps^c,
\]
where $c_1$, $c_2$ and $c_3$ are smooth functions and $a,b,c\in\R$ with $c\ge\min\{a,b\}$. We set $P_{0,\eps}(D)=\eps^a D^2_x-\eps^bD^2_y$, $P_{1,\eps}(D)=\eps^a D_x$, $P_{2,\eps}(D)=\eps^b D_y$ and $P_{3,\eps}(D)=\eps^c$. Assume that all the coefficients $c_j$ vanish in a point $(x_0,y_0)$. The hypotheses of Theorem \ref{theo_locsolv_easy} are fulfilled. Indeed, 
\[
\wt{P_{0,\eps}}^2(\xi_1,\xi_2)=(\eps^a\xi_1^2-\eps^b\xi_2^2)^2+(2\eps^a\xi_1)^2+(2\eps^b\xi_2)^2+4\eps^{2a}+4\eps^{2b} 
\]
is invertible in $(1,0)$ and concerning the functions $\wt{P_{1,\eps}}^2(\xi_1)=\eps^{2a}\xi_1^2+\eps^{2a}$, $\wt{P_{2,\eps}}^2(\xi_2)=\eps^{2b}\xi_2^2+\eps^{2b}$ and $\wt{P_{3,\eps}}^2=\eps^{2c}$ the following inequalities hold:
\begin{multline*}
\wt{P_{1,\eps}}^2(\xi_1)=\eps^{2a}\xi_1^2+\eps^{2a}\le 4\eps^{2a}\xi_1^2+4\eps^{2a}\le \wt{P_{0,\eps}}^2(\xi_1,\xi_2),\\
\wt{P_{2,\eps}}^2(\xi_2)=\eps^{2b}\xi_2^2+\eps^{2b}\le 4\eps^{2b}\xi_2^2+4\eps^{2b}\le \wt{P_{0,\eps}}^2(\xi_1,\xi_2),\\
\wt{P_{3,\eps}}^2=\eps^{2c}\le 4\eps^{2a}+4\eps^{2b}\le\wt{P_{0,\eps}}^2(\xi_1,\xi_2).
\end{multline*}
\end{example}

\subsection{Local solvability: case $c_j\in\G$.}

\begin{theorem}
\label{theo_locsolv}
Let $\Om$ be a neighborhood of $x_0$ and let $P(x,D)=P_0(D)+\sum_{j=1}^r c_j(x)P_j(D)$ with $c_j\in\G(\Om)$ for all $j$. 

If, for a certain choice of representatives, the hypotheses $(h1)$, $(h2)$, $(h3)$ are fulfilled and in addition,  
\begin{itemize}
\item[(h5)] $\wt{P_{j,\eps}}(\xi)\le\lambda_{j,\eps}\wt{P_{0,\eps}}(\xi)$ with 
\[
\sup_{|\alpha|\le n+1+\frac{n+1}{p}+N}\sup_{x\in\Om}|\partial^\alpha c_{j,\eps}(x)|\, \lambda_{j,\eps}=O(1)
\]
for some $p\in[1,\infty)$, for some $N\in\N$ and for all $j=1,...,r$,
\end{itemize} 
then there exists a sufficiently small neighborhood $\Om_{\delta}:=\{x:\, |x-x_0|<\delta\}$ of $x_0$ such that
\begin{itemize}
\item[(i)] for all $F\in\LLb(\G(\R^n),\wt{\C})$ of order $N$ there exists $T\in\LLb(\G(\R^n),\wt{\C})$ solving $P(x,D)T=F$ on $\Om_\delta$.
\end{itemize}

If, for a certain choice of representatives, the hypotheses $(h1)$, $(h2)$, $(h3)$ are fulfilled and in addition,  
\begin{itemize}
\item[(h6)] $\wt{P_{j,\eps}}(\xi)\le\lambda_{j,\eps}\wt{P_{0,\eps}}(\xi)$ with 
\[
\forall N\in\N\ \exists a>0\qquad\qquad\sup_{|\alpha|\le n+1+N}\sup_{x\in\Om}|\partial^\alpha c_{j,\eps}(x)|\, \lambda_{j,\eps}=O(\eps^a)\ \ \ \ \ \ \ \ \ \ \ \ \ \ \ \ \ \ \ \
\]
for all $j=1,...,r$, 
\end{itemize}
then there exists a sufficiently small neighborhood $\Om_{\delta}:=\{x:\, |x-x_0|<\delta\}$ of $x_0$ such that
\begin{itemize}
\item[(ii)] for all $F\in\LLb(\G(\R^n),\wt{\C})$ there exists $T\in\LLb(\G(\R^n),\wt{\C})$ solving $P(x,D)T=F$ on $\Om_\delta$.
\item[(iii)] for all $v\in\Gc(\R^n)$ there exists $u\in\Gc(\R^n)$ solving $P(x,D)u=v$ on $\Om_{\delta}$.
\end{itemize}
\end{theorem}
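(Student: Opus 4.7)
My plan is to closely mimic the five-step scheme in the proof of Theorem \ref{theo_locsolv_easy}, the only serious task being to upgrade Step 3's contraction estimate to handle generalized coefficients. I fix the fundamental solution $E_0\in\LLb(\Gc(\R^n),\wt{\C})$ of $P_0(D)$ supplied by Theorem \ref{theo_fund_P} (using $(h2)$), a cut-off $\chi\equiv 1$ near $\{|x|\le 2\delta_0\}$, and set $F_{0,\eps}=\chi E_{0,\eps}$, so that $\Vert F_{0,\eps}\Vert_{\infty,\wt{P_{0,\eps}}}\le C_1$. Writing $\psi_{\delta,x_0}(x)=\psi((x-x_0)/\delta)$ as before, the net of operators
\[
A_{\delta,\eps}(g)=\sum_{j=1}^r\psi_{\delta,x_0}\,c_{j,\eps}\,P_{j,\eps}(D)(F_{0,\eps}\ast g)
\]
satisfies, by exactly the same use of Theorem \ref{theo_Bpk}(i)--(iii) and of $(h3)$, the intermediate bound
\[
\Vert A_{\delta,\eps}(g)\Vert_{p,k_\nu}\le 2C_1\sum_{j=1}^r\lambda_{j,\eps}\,\Vert\psi_{\delta,x_0}c_{j,\eps}\Vert_{1,1}\,\Vert g\Vert_{p,k_\nu}
\]
for $\nu<\nu_\delta$; only the final step of forcing this quantity to be $\le 1/2$ uniformly in $\eps$ needs to be revisited.

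The key lemma would be an $\eps$-sensitive version of Theorem \ref{theo_Bpk}(iv). Writing $c_{j,\eps}=c_{j,\eps}(x_0)+(c_{j,\eps}-c_{j,\eps}(x_0))$, the constant contribution is bounded by $|c_{j,\eps}(x_0)|\,\Vert\psi_{\delta,x_0}\Vert_{1,1}$, which is negligible since $c_j(x_0)=0$, and therefore stays negligible (in particular $o(1)$) after multiplication by the moderate net $\lambda_{j,\eps}$. For the remainder I would run the proof of Theorem \ref{theo_Bpk}(iv) pointwise in $\eps$: combining a Sobolev-type bound $\Vert\phi\Vert_{1,1}\le c\sum_{|\alpha|\le N'}\Vert\partial^\alpha\phi\Vert_1$ with a Taylor expansion of $c_{j,\eps}-c_{j,\eps}(x_0)$ on $\{|x-x_0|\le 2\delta\}$ and the scaling of $\psi_{\delta,x_0}$ to obtain
\[
\Vert\psi_{\delta,x_0}(c_{j,\eps}-c_{j,\eps}(x_0))\Vert_{1,1}\le C\,\delta\sup_{|\alpha|\le N'}\sup_{x\in\Om}|\partial^\alpha c_{j,\eps}(x)|,
\]
with $N'=n+1+(n+1)/p+N$ read off by tracking the weight $k(\xi)=\lara{\xi}^{-(Np+n+1)/p}$ which, via Proposition \ref{prop_basic}(i), is the natural one for a basic functional of order $N$.

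Under $(h5)$ the product $\lambda_{j,\eps}\sup_{|\alpha|\le N'}\sup_\Om|\partial^\alpha c_{j,\eps}|$ is $O(1)$, so choosing first $\delta<\delta_1$ and then $\eps<\eps_1$ small enough yields $\Vert A_{\delta,\eps}\Vert\le 1/2$ as an operator on this particular $B_{p,k_\nu}$; Steps 4 and 5 of the proof of Theorem \ref{theo_locsolv_easy} then carry over verbatim with $F\in\LLb(\G(\R^n),\wt{\C})$ of order $N$, proving $(i)$. Under $(h6)$ the same product is $O(\eps^a)$ for any prescribed $a>0$ and any $N\in\N$, hence the contraction holds in $B_{p,k_\nu}$ for every $k\in\mathcal{K}$ and every $p\in[1,\infty)$: this yields $(ii)$ for arbitrary basic $F$ via Proposition \ref{prop_basic}(i), and $(iii)$ follows because $v\in\Gc(\R^n)$ is $B_{p,k}$-moderate for all $k$, after which Proposition \ref{prop_basic}(iii) applied with $s$ arbitrarily large upgrades the solution $g_\eps$ of $g_\eps+A_{\delta,\eps}(g_\eps)=\psi_{\delta,x_0}v_\eps$ from $B_{p,k}$-moderate to an element of $\EM(\R^n)$, while $u_\eps=F_{0,\eps}\ast g_\eps$ sits in $\Gc(\R^n)$ by the convolution rules invoked in Step 5 of the earlier proof.

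The main obstacle is precisely the calibration of the Sobolev exponent: the embedding $\Vert\cdot\Vert_{1,1}\lesssim\sum\Vert\partial^\alpha\cdot\Vert_1$ has to be chosen in tandem with the weight $k_\nu$ so that the derivative order needed on $c_{j,\eps}$ lands exactly at $n+1+(n+1)/p+N$ as prescribed by $(h5)$, and one must simultaneously verify that the number $\nu_\delta$ furnished by Theorem \ref{theo_Bpk}(iii) can be held fixed uniformly in $\eps$ on some small interval $(0,\eps_1]$. Once this bookkeeping is settled, every other ingredient is a pointwise-in-$\eps$ repetition of the smooth argument, the negligible terms generated by $c_{j,\eps}(x_0)$ being absorbed into the $1/2$-contraction margin.
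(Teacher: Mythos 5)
Your proposal reproduces the naive adaptation that the paper explicitly rules out in Remark \ref{rem_tech}, and the point you defer as ``bookkeeping'' is in fact the crux, and it fails. Your intermediate bound
$\Vert A_{\delta,\eps}(g)\Vert_{p,k_\nu}\le 2C_1\sum_j\lambda_{j,\eps}\Vert\psi_{\delta,x_0}c_{j,\eps}\Vert_{1,1}\Vert g\Vert_{p,k_\nu}$ rests on applying Theorem \ref{theo_Bpk}$(iii)$ with $\phi=\psi_{\delta,x_0}c_{j,\eps}$; but the threshold $\nu_0$ there is obtained from $\Vert\phi\Vert_{1,M_{k_\nu}}\to\Vert\phi\Vert_{1,1}$, whose rate is governed by how spread out $\widehat{\phi}$ is. For genuinely generalized coefficients the nets $(\partial^\alpha c_{j,\eps})_\eps$ are only moderate, so $\widehat{\psi_{\delta,x_0}c_{j,\eps}}$ spreads without bound as $\eps\to 0$ and no single $\nu_\delta$ works on an interval $(0,\eps_1]$; at best $\nu$ must shrink with $\eps$, which destroys the uniform $1/2$-contraction in a fixed equivalent norm and makes the admissible radius $\delta$ depend on $\eps$. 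The paper's proof avoids this by changing the operator itself: it inserts extra cutoffs, $A_{\delta,\eps}(g)=\sum_j\psi_{\delta,x_0}\psi_{\delta_0,x_0}^2c_{j,\eps}P_{j,\eps}(D)(F_{0,\eps}\ast g)$, applies part $(iii)$ only to the fixed, $\eps$-independent function $\psi_{\delta,x_0}\psi_{\delta_0,x_0}$ (yielding an $\eps$-independent $\nu_\delta$ and the $O(\delta)$ factor), and handles the $\eps$-dependent factor $\psi_{\delta_0,x_0}c_{j,\eps}$ through part $(ii)$, whose cost $\Vert\psi_{\delta_0,x_0}c_{j,\eps}\Vert_{1,M_{k_\nu}}\le c\sup_{|\alpha|\le n+1+\frac{n+1}{p}+N}\sup_\Om|\partial^\alpha c_{j,\eps}|$ is exactly what hypotheses $(h5)$/$(h6)$ pair against $\lambda_{j,\eps}$. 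Your Taylor-expansion ``key lemma'' is a reasonable heuristic for where the derivative order comes from, but it does not repair the $\nu_0(\eps)$ problem, which lives in the weight $M_{k_\nu}$ and not in the $\Vert\cdot\Vert_{1,1}$ norm.

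A second, smaller gap: under $(h6)$ the contraction holds only on an interval $(0,\eps_k)$ that depends on the weight $k$ (the constant $c_1(k,\delta_0,\psi)\eps^{a_k}$ must be made $<1$). Consequently, for $v\in\Gc(\R^n)$ one does not get $(g_\eps)_\eps\in\EM(\R^n)$ on a single interval by ``taking $s$ arbitrarily large''; one only gets $(g_\eps)_{\eps\in(0,\eps_j)}\in\mM_{\mathcal{C}^j(\R^n)}$ for each $j$, with $\eps_j\searrow 0$. The paper needs an additional mollification step (Step 6, convolving $g_\eps$ with $\rho_{n_\eps}$ for a negligible net $(n_\eps)_\eps$) to produce a genuine representative in $\EM(\R^n)$ and conclude $u\in\Gc(\R^n)$; your argument omits this entirely.
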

\begin{proof}
As for Theorem \ref{theo_locsolv_easy} we organize the proof in few steps. Clearly the steps $1$ and $2$ are as in the proof of Theorem \ref{theo_locsolv_easy} since $P_0(D)$ is an operator with constant Colombeau coefficients. New methods have to be applied to $\sum_{j=1}^r c_j(x)P_j(D)$ since the coefficients $c_j$ are Colombeau generalized functions. We work on a neighborhood $\Om=\Om_{2\delta_0}$ of $x_0$ and we use the notations introduced in proving Theorem \ref{theo_locsolv_easy}.

\bf{First set of hypotheses}\rm\\
We begin with the assumptions $(h1)$, $(h2)$, $(h3)$ and $(h5)$.
 
\bf{Step 3: the operator $\sum_{j=1}^r c_j(x)P_j(D)$ on $\Om_{\delta}\subseteq\Om_{\delta_0}$}\rm\\
We fix a choice of representatives $(c_{j,\eps})$ and the representatives $P_{j,\eps}$ and $P_{0,\eps}$ of the hypothesis. Let $k\in\mathcal{K}$ and $p\in[1,\infty)$ as in $(h5)$. We define the operator
\[
A_{\delta,\eps}(g)=\sum_{j=1}^r \psi_{\delta,x_0}\psi_{\delta_0,x_0}^2c_{j,\eps}P_{j,\eps}(D)(F_{0,\eps}\ast g).
\]
It maps $\D'(\R^n)$ into $\E'(\R^n)$. We want to estimate the $B_{p,k_\nu}$-norm of $A_{\delta,\eps}(g)$. We begin by observing that by Theorem \ref{theo_Bpk}$(iv)$ there exists $\delta_1<\delta_0$ such that 
\[
\Vert \psi_{\delta,x_0}\psi_{\delta_0,x_0}\Vert_{1,1}\le c\delta
\]
for all $\delta<\delta_1$. Choosing $\delta<\delta_1$ and $\phi=\psi_{\delta,x_0}\psi_{\delta_0,x_0}\in\S(\R^n)$ from Theorem \ref{theo_Bpk}$(iii)$ we obtain a certain $\nu_\delta$ such that for all $\nu\in(0,\nu_\delta)$ and for all $\eps\in(0,1]$ the following estimate holds:
\begin{multline*}
\Vert A_{\delta,\eps}(g)\Vert_{p,k_\nu}\le 2\sum_{j=1}^r\Vert \psi_{\delta,x_0}\psi_{\delta_0,x_0}\Vert_{1,1}\, \Vert\psi_{\delta_0,x_0}c_{j,\eps}P_{j,\eps}(D)(F_{0,\eps}\ast g)\Vert_{p,k_\nu}\\
\le 2c\delta\sum_{j=1}^r\Vert\psi_{\delta_0,x_0}c_{j,\eps}P_{j,\eps}(D)(F_{0,\eps}\ast g)\Vert_{p,k_\nu}.
\end{multline*}
We assume $k(\xi+\eta)\le (1+|\xi|)^{N+\frac{n+1}{p}}k(\eta)$ for all $\xi$. It follows that $M_{k_\nu}(\xi)\le (1+|\xi|)^{N+\frac{n+1}{p}}$ for all $\nu$. An application of the first two assertions of Theorem \ref{theo_Bpk} combined with $(h5)$ and the properties of $F_{0,\eps}$ entails, on a certain interval $(0,\eps_k)$ depending on $k$, the inequality
\begin{multline*}
\Vert A_{\delta,\eps}(g)\Vert_{p,k_\nu}\le 2c\delta\sum_{j=1}^r\Vert\psi_{\delta_0,x_0}c_{j,\eps}\Vert_{1,M_{k_\nu}}\Vert P_{j,\eps}(D)(F_{0,\eps}\ast g)\Vert_{p,k_\nu}\\
\le 2c\delta\sum_{j=1}^r\Vert(1+C|\xi|)^{N+\frac{n+1}{p}}\mathcal{F}(\psi_{\delta_0,x_0}c_{j,\eps})(\xi)\Vert_1\Vert P_{j,\eps}(D)(F_{0,\eps})\Vert_{\infty,1}\Vert g\Vert_{p,k_\nu}\\
\le 2c\delta\sum_{j=1}^r c(\psi_{\delta_0,x_0})\sup_{|\alpha|\le n+1+\frac{n+1}{p}+N}\sup_{x\in\Om}|\partial^\alpha c_{j,\eps}(x)|\lambda_{j,\eps}\Vert F_{0,\eps}\Vert_{\infty,\wt{P_{0,\eps}}}\Vert g\Vert_{p,k_\nu}\\
\le 2\, c_1(k,\delta_0,\psi)\delta \Vert g\Vert_{p,k_\nu}.
\end{multline*}
This means that for any $k$ there exist $\delta=\delta_k$ small enough and $\eps_k$ small enough such that
\[
\Vert A_{\delta,\eps}(g)\Vert_{p,k_\delta}\le 2^{-1}\Vert g\Vert_{p,k_\delta}
\]
holds for all $\eps\in(0,\eps_k)$ and for all $g\in B_{p,k_\delta}(\R^n)=B_{p,k}(\R^n)$. It follows that for all $f\in B_{p,k}(\R^n)$ there exists a unique solution $(g_\eps)_{\eps\in(0,\eps_k)}$ with $g_\eps\in B_{p,k}(\R^n)$ of the equation
\[
g+A_{\delta,\eps}(g)=\psi_{\delta,x_0} f
\]
for $\eps\in(0,\eps_k)$.

\bf{Step 4: the equation $P(x,D)T=F$ on $\Om_\delta$ with $F\in\LLb(\G(\R^n),\wt{\C})$}\rm\\
Let $(F_\eps)_\eps$ be a net in $\E'(\R^n)^{(0,1]}$ which defines $F$ of order $N$. By Proposition \ref{prop_basic}$(i)$ we know that $(F_\eps)_\eps\in \mM_{B_{p,k}(\R^n)}$ for $k(\xi)=\lara{\xi}^{-N-\frac{n+1}{p}}$. Hence, for all $\nu>0$ we have $M_{k_\nu}(\xi)\le (1+|\xi|)^{N+\frac{n+1}{p}}$ and we are under the hypothesis of the previous step. Let us take $\Om_\delta$ with $\delta=\delta_k$ such that the previous arguments are valid for $B_{p,k_\delta}=B_{p,k}$. At the level of representatives we can write  
\[
P_{0,\eps}(D)(T_\eps)+\sum_{j=1}^r \psi_{\delta,x_0}\psi_{\delta_0,x_0}^2c_{j,\eps} P_{j,\eps}(D)T_\eps = \psi_{\delta,x_0}F_\eps
\]
on $\Om_\delta$. Let $(g_\eps)_\eps$ be the unique solution of the equation
\[
g_\eps+A_{\delta,\eps}(g_\eps)=\psi_{\delta,x_0}F_\eps
\]
on the interval $(0,\eps_k)$. As in Theorem \ref{theo_locsolv_easy} one easily sees that $(g_\eps)_\eps$ is $B_{p,k}$-moderate. Since  $\supp\,{A_{\delta,\eps}(g)}\subseteq\supp\psi_{\delta,x_0}\subseteq \Om_{2\delta}$ for all $g\in\D'(\R^n)$ we conclude that $\supp\, g_\eps$ is contained in a compact set uniformly with respect to $\eps$ (and therefore $\supp\, g_\eps\subseteq\Om_{\delta_0}$ for some $\delta_0$). From Proposition \ref{prop_basic}$(ii)$ it follows that $(g_\eps)_\eps$ generates a basic functional in $\LL(\G(\R^n),\wt{\C})$. Let now 
\[
T_\eps= F_{0,\eps}\ast g_\eps.
\]
The fourth assertion of Proposition \ref{prop_basic} yields that the net $(T_\eps)_\eps$ defines $T\in\LLb(\G(\R^n),\wt{\C})$. By construction, for all $\eps\in(0,\eps_k)$ we have
\begin{multline*}
P_{0,\eps}(D)(T_\eps)|_{\Om_\delta}+\sum_{j=1}^r (\psi_{\delta,x_0}\psi_{\delta_0,x_0}^2c_{j,\eps} P_{j,\eps}(D)T_\eps)|_{\Om_\delta} = P_{0,\eps}(D)(F_{0,\eps}\ast g_\eps)|_{\Om_\delta}+A_{\delta,\eps}(g_\eps)|_{\Om_\delta}\\
=g_\eps|_{\Om_\delta}+A_{\delta,\eps}(g_\eps)|_{\Om_\delta}=\psi_{\delta,x_0}F_\eps|_{\Om_\delta}=F_\eps|_{\Om_\delta}.
\end{multline*}
We have solved the equation $P(x,D)T=F$ on a neighborhood of $x_0$ which depends on the weight function $k$ or in other words on the order of the functional $F$. 

\bf{Second set of hypotheses}\rm\\
We now assume that $(h1)$, $(h2)$, $(h3)$ and $(h6)$ hold and we prove that in this case one can find a neighborhood $\Om_\delta$ which does not depend on the weight function $k$. 

\bf{Step 3: the operator $\sum_{j=1}^r c_j(x)P_j(D)$ on $\Om_{\delta}\subseteq\Om_{\delta_0}$}\rm\\
Let $k$ be an arbitrary weight function. Choosing $\delta<\delta_1$ and any $\nu\in(0,\nu_\delta)$ our set of hypotheses combined with the properties of $F_{0,\eps}$ yields on an interval $(0,\eps_k)$ depending on $k$ the following estimates:
\begin{multline*}
\Vert A_{\delta,\eps}(g)\Vert_{p,k_\nu}\le 2c\delta\sum_{j=1}^r\Vert\psi_{\delta_0,x_0}c_{j,\eps}\Vert_{1,M_{k_\nu}}\Vert P_{j,\eps}(D)(F_{0,\eps}\ast g)\Vert_{p,k_\nu}\\
\le 2c\delta\sum_{j=1}^r\Vert(1+C|\xi|)^{N_k}\mathcal{F}(\psi_{\delta_0,x_0}c_{j,\eps})(\xi)\Vert_1\Vert P_{j,\eps}(D)(F_{0,\eps})\Vert_{\infty,1}\Vert g\Vert_{p,k_\nu}\\
\le 2c\delta\sum_{j=1}^r c(\psi_{\delta_0},x_0)\sup_{|\alpha|\le n+1+N_k}\sup_{x\in\Om}|\partial^\alpha c_{j,\eps}(x)|\lambda_{j,\eps}\Vert F_{0,\eps}\Vert_{\infty,\wt{P_{0,\eps}}}\Vert g\Vert_{p,k_\nu}\\
\le 2\delta\, c_1(k,\delta_0,\psi)\eps^{a_k} \Vert g\Vert_{p,k_\nu}.
\end{multline*}
At this point taking $\eps_k$ so small that $c_1(k,\delta_0,\psi)\eps^{a_k}_k<1$ and by requiring $\delta<1/4$ we have that for all $\delta<\min({\delta_1,1/4})$ there exist $\nu_\delta$ such that for all $\nu<\nu_\delta$ the inequality
\[
\Vert A_{\eps,\delta}(g)\Vert_{p,k_\nu}\le 2^{-1}\Vert g\Vert_{p,k_\nu}
\]
holds in a sufficiently small interval $\eps\in(0,\eps_k)$. Note that $\delta$ does not depend on $k$ while $\eps_k$ does. Clearly for all for all $f\in B_{p,k}(\R^n)$ there exists a unique solution $(g_\eps)_{\eps\in(0,\eps_k)}$ with $g_\eps\in B_{p,k}(\R^n)$ of the equation $g+A_{\delta,\eps}(g)=\psi_{\delta,x_0} f$ for $\eps\in(0,\eps_k)$.

\bf{Step 4: the equation $P(x,D)T=F$ on $\Om_\delta$ with $F\in\LLb(\G(\R^n),\wt{\C})$}\rm\\
Let $(F_\eps)_\eps$ be a net in $\E'(\R^n)^{(0,1]}$ which defines $F$. By Proposition \ref{prop_basic}$(i)$ we know that $(F_\eps)_\eps\in \mM_{B_{p,k}(\R^n)}$ for some weight function $k$. The previous arguments are valid in a neighborhood $\Om_\delta$ of $x_0$, with $\delta$ independent of $k$ and $(g_\eps)_\eps$ is the unique solution of the equation
\[
g_\eps+A_{\delta,\eps}(g_\eps)=\psi_{\delta,x_0}F_\eps
\]
on the interval $(0,\eps_k)$. The functional generated by
\[
T_\eps= F_{0,\eps}\ast g_\eps
\]
is the solution $T$ of the equation $P(x,D)T=F$ on $\Om_\delta$.
 
\bf{Step 5: the equation $\sum_{j=1}^r c_j(x)P_j(D)=v\in\Gc(\R^n)$ on $\Om_{\delta}$}\rm\\
Let $(v_\eps)_\eps\in\EM(\R^n)$ be a representative of $v\in\Gc(\R^n)$. For $\eps\in(0,\eps_k)$ the equation 
\beq
\label{eq_import}
A_{\eps,\delta}g+g=\psi_{\delta,x_0}v_\eps
\eeq
has a unique solution $g_\eps$ in $B_{p,k}$. Moreover, 
\beq
\label{est_import}
\Vert g_\eps\Vert_{p,k}\le C\Vert\psi_{\delta_0,x_0}v_\eps\Vert_{p,k}
\eeq
for all $\eps\in(0,\eps_k)$. Since $\psi_{\delta,x_0}v_\eps$ belongs to any $B_{p,k}$ space we can conclude that there exists a net of distributions $(g_\eps)_\eps$ which solve the equation \eqref{eq_import} in $\D'$ and such that for all $k\in\mathcal{K}$ the estimate \eqref{est_import} holds on a sufficiently small interval $(0,\eps_k)$. From the embedding $B_{p,k}\subset \mathcal{C}^j$ when $(1+|\xi|)^j/k(\xi)\in L^{q}$, $1/p+1/q=1$ the inequality \eqref{est_import} and the fact that $(\Vert\psi_{\delta,x_0}v_\eps\Vert_{p,k})_\eps$ is moderate we have the following:
\[
\forall j\in\N\, \exists \eps_j\in(0,1], \eps_j\searrow 0,\qquad (g_\eps)_{\eps\in(0,\eps_j)}\in\mM_{\mathcal{C}^j(\R^n)}. 
\]
In other words the net of distributions $(g_\eps)_\eps$ solves the equation \eqref{eq_import} in $\D'$ and has more and more moderate derivatives as $\eps$ goes to $0$. $(g_\eps)_\eps$ is the representative of a basic functional $g$ in $\LL(\G(\R^n),\wt{\C})$. We already know that $u=F_0\ast g\in\LLb(\G(\R^n),\wt{\C})$ solves the equation $P(x,D)u=v$ in $\LL(\Gc(\Om_\delta),\wt{\C})$. 

\bf{Step 6: $g$ belongs to $\Gc(\R^n)$ then $u\in\Gc(\R^n)$}\rm\\
We finally prove that $g$ is an element of $\Gc(\R^n)$. Since we already know that $g$ has compact support we just have to prove that $g$ belongs to $\G(\R^n)$. This will imply that $u\in\Gc(\R^n)$. We generate a representative of $g$ which belongs to $\EM(\R^n)$. Let $(n_\eps)_\eps\in\Neg$ with $n_\eps\neq 0$ for all $\eps$ and $\rho\in\Cinfc(\R^n)$ with $\int\rho=1$. For $\rho_{n_\eps}(x)=\rho(x/n_\eps)n_\eps^{-n}$, the net $g_\eps\ast \rho_{n_\eps}$ belongs to $\EM(\R^n)$. Indeed, $g_\eps\ast\rho_{n_\eps}\in\Cinf$ for each $\eps$ and taking $\eps$ small enough
\[
\sup_{x\in K}|\partial^\alpha (g_\eps\ast\rho_{n_\eps})(x)|= \sup_{x\in K}|\partial^\alpha (g_\eps)\ast\rho_{n_\eps}(x)|=\sup_{x\in K}\biggl|\int\partial^\alpha g_\eps(x-n_\eps z)\rho(z)\, dz\biggr|\le \eps^{-N}.
\]
Now, since
\[
g_\eps-g_\eps\ast\rho_{n_\eps}(x)=\int (g_\eps(x)-g_\eps(x-n_\eps z))\rho(z)\, dz
\]
and for all $q\in\N$,
\[
\sup_{x\in K}|g_\eps-g_\eps\ast\rho_{n_\eps}(x)|=O(\eps^q),
\]
we conclude that for all $u\in\Gc(\R^n)$, $g(u)=[(g_\eps\ast\rho_{n_\eps})_\eps](u)$ where  $[(g_\eps\ast\rho_{n_\eps})_\eps]\in\G(\R^n)$. This means that $g$ is an element of $\G(\R^n)$. 
\end{proof}
\begin{remark}
\label{rem_tech}
Note that in the proof we do not make use of the assertion $(iii)$ of Theorem \ref{theo_Bpk} with $\phi=\psi_{\delta,x_0}c_{j,\eps}$ and $u=P_{j,\eps}(D)(F_{0,\eps}\ast g)$ because this would generate some $\nu_0$ depending on $\phi$ and therefore on $\eps$. This would also lead to a neighborhood $\Om_\delta$ of $x_0$ whose radius depends on the parameter $\eps$. Finally, note that the condition $(h6)$ is an assumption of $\Ginf$-regularity for the coefficients $c_j$.
\end{remark}
\begin{example}
\label{example_2} 
On $\R^2$ we define the operator
\[
D^2_x-\eps^{-1}D^2_y+c_{1,\eps}(x,y)D_x+c_{2,\eps}(x,y)D_y+c_{3,\eps}(x,y),
\]
where $(c_{1,\eps})_\eps$, $(c_{2,\eps})_\eps$ and $(c_{3,\eps})_\eps$ are moderate nets of smooth functions. We set $P_{0,\eps}(D)=D^2_x-\eps^{-1}D^2_y$, $P_{1}(D)=D_x$, $P_{2}(D)=D_y$ and $P_{3}(D)=I$. Assume that the coefficients $c_{j,\eps}$ vanish in a point $(x_0,y_0)$ for all $\eps$. The weight function  
\[
\wt{P_{0,\eps}}^2(\xi_1,\xi_2)=(\xi_1^2-\eps^{-1}\xi_2^2)^2+(2\xi_1)^2+(2\eps^{-1}\xi_2)^2+4+4\eps^{-2} 
\]
is invertible in $(1,0)$ and concerning the functions $\wt{P_{1}}^2(\xi_1)=\xi_1^2+1$, $\wt{P_{2}}^2(\xi_2)=\xi_2^2+1$ and $\wt{P_{3}}^2=1$ the following inequalities hold:
\begin{multline*}
\wt{P_{1}}^2(\xi_1)=\xi_1^2+1\le 4\xi_1^2+4\le \wt{P_{0,\eps}}^2(\xi_1,\xi_2)\\
\wt{P_{2}}^2(\xi_2)=\xi_2^2+1=\frac{1}{4}\eps^2(4\eps^{-2}(\xi_2^2+1))\le \frac{1}{4}\eps^2\wt{P_{0,\eps}}^2(\xi_1,\xi_2)\\
\wt{P_{3}}^2=1\le 4\le \wt{P_{0,\eps}}^2(\xi_1,\xi_2).
\end{multline*}
One easily sees that for $\lambda_{1}=1$, $\lambda_{2,\eps}=\frac{1}{2}\eps$ and $\lambda_3=1$ the assumptions $(h1)$, $(h2)$, $(h3)$ and $(h6)$ of Theorem \ref{theo_locsolv} are fulfilled if, on a certain neighborhood $\Om$ of $x_0$ and for all $N\in\N$, the following holds: 
\[
\sup_{|\alpha|\le N}\sup_{x\in\Om}|\partial^\alpha c_{1,\eps}(x)|=O(\eps),\quad \sup_{|\alpha|\le N}\sup_{x\in\Om}|\partial^\alpha c_{2,\eps}(x)|=O(1),\quad \sup_{|\alpha|\le N}\sup_{x\in\Om}|\partial^\alpha c_{3,\eps}(x)|=O(\eps).
\]
Hence the operator
\[
D^2_x-[(\eps^{-1})_\eps]D^2_y+c_1(x,y)D_x+c_2(x,y)D_y+c_3(x,y),
\]
with $\Ginf$-coefficients $c_1=[(c_{1,\eps})_\eps]$, $c_2=[(c_{2,\eps})_\eps]$ and $c_3=[(c_{3,\eps}(x,y))_\eps]$ 
is locally solvable at $x_0$ in both the Colombeau algebra $\G(\R^n)$ and the dual $\LL(\Gc(\R^n),\wt{\C})$.
\end{example}

\section{A sufficient condition of local solvability for generalized pseudodifferential operators}
\label{sec_pseudo}
We conclude the paper by providing a sufficient condition of local solvability for generalized pseudodifferential operators. The generalization from differential to pseudodifferential operators obliges us to find suitable functional analytic methods able to generate a local solution which are not a simple convolution between the right-hand side term and a fundamental solution. In particular, we will make use of the Sobolev mapping properties of a generalized pseudodifferential operator $a(x,D)$. In the sequel we use the notation $H^0(\R^n)=L^2(\R^n)$ and $\Vert\cdot\Vert_s$ for the Sobolev $H^s$-norm.
\begin{proposition}
\label{prop_Sob}
Let $a\in\G_{S^m(\R^{2n})}$. There exist $l_0\in\N$ and a constant $C_0>0$ such that the inequality
\beq
\label{Sob_ineq}
\Vert a_\eps(x,D)u\Vert_{s-m}\le C_0\max_{|\alpha+\beta|\le l_0}|\lara{\xi}^{s-m}\sharp a_\eps\sharp\lara{\xi}^{-s}|^{(0)}_{\alpha,\beta}\,\Vert u\Vert_s
\eeq
holds for all $s\in\R$, for all $u\in H^s(\R^n)$, for all representatives $(a_\eps)_\eps$ of $a$ and for all $\eps\in(0,1]$.
\end{proposition}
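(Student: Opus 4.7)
The strategy is to reduce the Sobolev estimate \eqref{Sob_ineq} to the classical $L^2$--$L^2$ continuity estimate recalled earlier in the paper, namely
\[
\Vert b(x,D)w\Vert_2\le C_0\max_{|\alpha+\beta|\le l_0}|b|^{(0)}_{\alpha,\beta}\,\Vert w\Vert_2,\qquad b\in S^0(\R^{2n}),\ w\in\S(\R^n),
\]
in which $C_0$ and $l_0$ depend only on the space dimension $n$ (\cite[Chapter 2, Theorem 4.1]{Kumano-go:81}). The reduction is the standard conjugation trick: for fixed $\eps$ and $s$, set $v=\lara{D}^s u$ so that $u=\lara{D}^{-s}v$, and rewrite
\[
\Vert a_\eps(x,D)u\Vert_{s-m}=\Vert\lara{D}^{s-m}a_\eps(x,D)\lara{D}^{-s}v\Vert_{0}.
\]
By the symbolic calculus for classical (non-generalized) pseudodifferential operators, applied for each fixed $\eps\in(0,1]$, the composition $\lara{D}^{s-m}\circ a_\eps(x,D)\circ\lara{D}^{-s}$ is itself a pseudodifferential operator whose full symbol is precisely $\lara{\xi}^{s-m}\sharp a_\eps\sharp\lara{\xi}^{-s}\in S^{0}(\R^{2n})$, since the orders add up to $(s-m)+m+(-s)=0$.

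Next I would apply the classical $L^2$-boundedness above to $b:=\lara{\xi}^{s-m}\sharp a_\eps\sharp\lara{\xi}^{-s}$ and $w:=v$, working first with $u\in\S(\R^n)$ so that $v\in\S(\R^n)$. This yields
\[
\Vert\lara{D}^{s-m}a_\eps(x,D)\lara{D}^{-s}v\Vert_{0}\le C_0\max_{|\alpha+\beta|\le l_0}|\lara{\xi}^{s-m}\sharp a_\eps\sharp\lara{\xi}^{-s}|^{(0)}_{\alpha,\beta}\,\Vert v\Vert_0,
\]
with the same universal $C_0$ and $l_0$ as in the classical estimate. Using $\Vert v\Vert_0=\Vert u\Vert_s$ gives exactly \eqref{Sob_ineq} for Schwartz data. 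Density of $\S(\R^n)$ in $H^s(\R^n)$, together with the continuity of $a_\eps(x,D):\S(\R^n)\to\S(\R^n)$ and of $\lara{D}^{s-m}$ on tempered distributions, extends the inequality to arbitrary $u\in H^s(\R^n)$.

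The decisive point, and the only delicate one, is that neither $l_0$ nor $C_0$ depends on $\eps$ or on $s$: all $\eps$-dependence and all $s$-dependence of the left-hand side are absorbed into the finitely many seminorms $|\lara{\xi}^{s-m}\sharp a_\eps\sharp\lara{\xi}^{-s}|^{(0)}_{\alpha,\beta}$ appearing on the right. Because the classical Calder\'on--Vaillancourt / Kumano-go constants depend only on $n$, this uniformity is automatic once the conjugation identity at the level of symbols has been established for each representative. The main obstacle, though largely routine, is therefore the symbol-level verification that $\lara{D}^{s-m}\circ a_\eps(x,D)\circ\lara{D}^{-s}$ really has $\lara{\xi}^{s-m}\sharp a_\eps\sharp\lara{\xi}^{-s}$ as its full symbol, with seminorm estimates valid uniformly in $\eps$; this is a direct consequence of the standard composition formula applied representative-wise and requires no new Colombeau-theoretic input.
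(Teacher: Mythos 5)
Your proof is correct and follows exactly the route the paper intends: the paper states this proposition without proof, but the presence of the seminorms of $\lara{\xi}^{s-m}\sharp a_\eps\sharp\lara{\xi}^{-s}$ on the right-hand side shows that the intended argument is precisely your conjugation by $\lara{D}^{s}$ and $\lara{D}^{s-m}$, reducing to the Kumano-go $L^2$-continuity estimate already quoted in Section 2, whose constants depend only on $n$. The symbol-level identification of the composed operator and the density extension are handled appropriately, so nothing is missing.
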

It is clear from \eqref{Sob_ineq} that the moderateness properties of $(a_\eps)_\eps$ are the same of $(a_\eps(x,D)u)_\eps$ as a net in $H^{s-m}(\R^n)^{(0,1]}$.
\begin{remark}
\label{rem_Sob}
The Colombeau space $\G_{L^2(\R^n)}$ based on $L^2(\R^n)$ is not contained in the dual $\LL(\Gc(\R^n),\wt{\C})$. Indeed, for $f\in L^2(\R)\cap L^1(\R)$, $f\neq 0$, and $(n_\eps)_\eps\in\Neg$ we have that $f=[(n_\eps^{-1/2}f(\cdot/n_\eps))_\eps]$ is not $0$ in $\G_{L^2(\R)}$ but 
\[
\int_\R u(x)f(x)\, dx= 0
\]
for all $u\in\Gc(\R)$. Analogously, the embedding $H^{s_1}(\R^n)\subseteq H^{s_2}(\R^n)$ cannot be reproduced at the level of the corresponding Colombeau spaces for $s_1\ge s_2$. This is due to the fact that there exist nets in $\Neg_{H^{s_2}(\R^n)}\cap\mM_{H^{s_1}(\R^n)}$ which do not belong to $\Neg_{H^{s_1}(\R^n)}$. For example, for $f\in H^1(\R^n)$ with $\Vert f'\Vert_0\neq 0$ and $(n_\eps)_\eps\in\Neg$ we have that $(n_\eps^{1/2}f(\cdot/n_\eps))_\eps\in\mM_{H^1(\R)}\cap\Neg_{H^0(\R)}$ but $(n_\eps^{-1/2}f'(\cdot/n_\eps))_\eps\not\in\Neg_{H^0(\R)}$ and therefore $(n_\eps^{1/2}f(\cdot/n_\eps))_\eps\not\in\Neg_{H^1(\R)}$.
\end{remark}
The embedding issues of Remark \ref{rem_Sob} lead us to study the equation $a(x,D)T=F$ in the dual $\LL(\Gc(\R^n),\wt{\C}$ even when $F$ belongs to a Colombeau space based on a Sobolev space.
\begin{theorem}
\label{theo_loc_Sob}
Let $a\in\G_{S^m(\R^{2n})}$. Assume that there exist a representative $(a_\eps^\ast)_\eps$ of $a^\ast$, a strictly non-zero net $(\lambda_\eps)_\eps$, a positive number $\delta>0$ and $0\le s\le m$ such that
\beq
\label{inv_Sob}
\Vert\varphi\Vert_s\le \lambda_\eps\Vert a^\ast_\eps(x,D)|_{\Om_\delta}\varphi\Vert_0
\eeq
for all $\varphi\in\Cinfc(\Om_\delta)$, $\Om_\delta:=\{|x|<\delta\}$, and for all $\eps\in(0,1]$.\\
Then, for all $F\in\LLb(\Gc(\R^n),\wt{\C})$ generated by a net in $\mM_{H^{-s}(\R^n)}$ there exists $T\in\LLb(\Gc(\Om_\delta),\wt{\C})$ generated by a net in $\mM_{L^2(\Om_\delta)}$ such that
\[
a(x,D)|_{\Om_\delta}T=F|_{\Om_\delta}
\]
in $\LL(\Gc(\Om_\delta),\wt{\C})$.
\end{theorem}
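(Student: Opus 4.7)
My plan is to adapt the classical duality/Hahn--Banach scheme for local solvability (as in H\"ormander or Saint-Raymond) to the Colombeau setting, working at the level of representatives while carefully tracking moderateness, and invoking the projection theorem on internal subsets of generalized Hilbert $\wt{\C}$-modules from \cite{GarVer:08} to extract a canonical $L^{2}(\Om_\delta)$-moderate representative of $T$. Fix representatives $(F_\eps)_\eps\in\mM_{H^{-s}(\R^n)}$ of $F$ and $(a_\eps^\ast)_\eps$ of $a^\ast$ satisfying \eqref{inv_Sob}.

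The first step is $\eps$-by-$\eps$ construction. For each $\eps\in(0,1]$, the estimate \eqref{inv_Sob} says that $a_\eps^\ast(x,D)|_{\Om_\delta}\colon \Cinfc(\Om_\delta)\to L^2(\Om_\delta)$ is injective. On the linear subspace $V_\eps:=a_\eps^\ast(x,D)|_{\Om_\delta}(\Cinfc(\Om_\delta))\subseteq L^2(\Om_\delta)$ I define the linear form
\[
L_\eps\colon V_\eps\to\C,\qquad L_\eps(a_\eps^\ast\varphi):=\langle F_\eps,\varphi\rangle_{H^{-s},H^s},
\]
which is well-defined by injectivity and, by \eqref{inv_Sob}, satisfies
\[
|L_\eps(a_\eps^\ast\varphi)|\le\Vert F_\eps\Vert_{-s}\Vert\varphi\Vert_s\le\lambda_\eps\Vert F_\eps\Vert_{-s}\Vert a_\eps^\ast\varphi\Vert_0.
\]
Hence $L_\eps$ extends uniquely by continuity to the $L^2(\Om_\delta)$-closure $\overline{V_\eps}$, and the Riesz representation theorem provides a unique $T_\eps\in\overline{V_\eps}\subseteq L^2(\Om_\delta)$ with
\[
\langle T_\eps,a_\eps^\ast\varphi\rangle_{L^2(\Om_\delta)}=\overline{\langle F_\eps,\varphi\rangle},\qquad \Vert T_\eps\Vert_0\le\lambda_\eps\Vert F_\eps\Vert_{-s},
\]
for all $\varphi\in\Cinfc(\Om_\delta)$. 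The canonical (minimal-norm) choice provided by Riesz is precisely what the projection theorem for internal subsets of generalized Hilbert $\wt{\C}$-modules yields at the Colombeau level; this guarantees that the construction passes to the quotient.

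From the moderateness of $(\lambda_\eps)_\eps$ and $(F_\eps)_\eps$ I obtain $(T_\eps)_\eps\in\mM_{L^2(\Om_\delta)}$. Since the elements have support in the fixed compact closure $\overline{\Om_\delta}$, this net generates a basic functional $T\in\LLb(\Gc(\Om_\delta),\wt{\C})$. The verification that $a(x,D)|_{\Om_\delta}T=F|_{\Om_\delta}$ is then formal: for any $u\in\Gc(\Om_\delta)$ with representative $(u_\eps)_\eps$,
\[
(a(x,D)|_{\Om_\delta}T)(u)=T(a^\ast(x,D)|_{\Om_\delta}u)=\bigl[(\langle T_\eps,\overline{a_\eps^\ast u_\eps}\rangle)_\eps\bigr]=\bigl[(\langle F_\eps,u_\eps\rangle)_\eps\bigr]=F(u),
\]
using continuity of $a^\ast(x,D)$ on $\Gc(\Om_\delta)$ to lift the duality.

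\textbf{Main obstacle.} The delicate point is not the existence of some $T_\eps$ at each level --- this is classical --- but ensuring that the family $(T_\eps)_\eps$ is genuinely $L^2$-moderate, independent of the chosen representative of $F$ modulo $\Neg$, and that the resulting $T$ is well-defined in $\LLb(\Gc(\Om_\delta),\wt{\C})$. The Riesz/minimal-norm prescription guarantees canonicity and the quantitative bound $\Vert T_\eps\Vert_0\le\lambda_\eps\Vert F_\eps\Vert_{-s}$, which provides moderateness and, applied to negligible modifications of $(F_\eps)_\eps$, gives negligibility of the corresponding change in $(T_\eps)_\eps$. The projection theorem on internal subsets from \cite{GarVer:08} is the correct abstract tool to formalize all of this uniformly in $\eps$ and to conclude that $T$ is well-defined as an element of $\LLb(\Gc(\Om_\delta),\wt{\C})$.
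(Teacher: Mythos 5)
Your proposal is correct and follows essentially the same route as the paper: your subspace $V_\eps$ is the paper's internal set $E_\eps$, your functional $L_\eps$ extended by continuity and orthogonal projection is the paper's $S\circ P_E$, and your minimal-norm Riesz representative $T_\eps\in\overline{V_\eps}$ is exactly the net $(t_\eps)_\eps$ that the paper extracts via the projection theorem on internal subsets and the Riesz theorem for Hilbert $\wt{\C}$-modules from \cite{GarVer:08}. The only difference is presentational: you run the classical Hilbert-space argument $\eps$-by-$\eps$ and verify moderateness by hand through the bound $\Vert T_\eps\Vert_0\le\lambda_\eps\Vert F_\eps\Vert_{-s}$, whereas the paper packages the same estimates into the generalized Hilbert $\wt{\C}$-module formalism; both arguments are complete.
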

The proof of Theorem \ref{theo_loc_Sob} makes use of the theory of Hilbert $\wt{\C}$-modules (see \cite{GarVer:08}) and in particular of the projection operators defined on the Hilbert $\wt{\C}$-module $\G_{L^2(\Om_\delta)}$. From Proposition 2.21 in \cite{GarVer:08} we have that if $E$ is a nonempty subset in $\G_{L^2(\Om_\delta)}$ generated by a net $(E_\eps)_\eps$ of nonempty convex subsets of $L^2(\Om_\delta)$ ($E=[(E_\eps)_\eps]:=\{u\in\G_{L^2(\Om_\delta)}:\, \exists\, {\rm{repr.}} (u_\eps)_\eps\, \forall\eps\in(0,1]\  u_\eps\in E_\eps\}$) then there exists a map $P_E:\G_{L^2(\Om_\delta)}\to E$ called projection on $E$ such that $\Vert u-P_E(u)\Vert=\inf_{w\in E}\Vert u-w\Vert$ for all $u\in\G_{L^2(\Om_\delta)}$. In this particular case we have in addition that the net $(P_{\overline{E}_\eps}\,(u_\eps))_\eps$ is moderate in $L^2(\Om_\delta)$ and the property $P_E(u)=[(P_{\overline{E}_\eps}\,(u_\eps))_\eps]$. If $E_\eps$ is a subspace of $L^2(\Om_\delta)$ we easily have for all $v\in L^2(\Om_\delta)$ the following inequality in the norm of $L^2(\Om_\delta)$: $\Vert P_{\overline{E}_\eps}v\Vert\le \Vert v- P_{\overline{E}_\eps}v\Vert+\Vert v\Vert\le \inf_{w\in\overline{E}_\eps}\Vert v-w\Vert+\Vert v\Vert\le 2\Vert v\Vert$.
\begin{proof}[Proof of Theorem \ref{theo_loc_Sob}]
Let $a_\eps^\ast$ be the representative of $a^\ast$ fulfilling the hypotheses of Theorem \ref{theo_loc_Sob}. Let $E_\eps:=\{\psi\in L^2(\Om_\delta):\, \exists\varphi\in\Cinfc(\Om_\delta)\quad \psi=a_\eps^\ast(x,D)|_{\Om_\delta}(\varphi)\}$. $E_\eps$ is a nonempty subspace of $L^2(\Om_\delta)$ and therefore we can define the projection operator $P_E:\G_{L^2(\Om_\delta)}\to E$ on $E:=[(E_\eps)_\eps]$ $\wt{\C}$-submodule of $\G_{L^2(\Om_\delta)}$. We use for the operator $\Cinfc(\Om_\delta)\to L^2(\Om_\delta):\varphi\to a_\eps^\ast(x,D)|_{\Om_\delta}(\varphi)$ the notation $A^\ast_\eps$. The condition \eqref{inv_Sob} means that $A^\ast_\eps:\Cinfc(\Om_\delta)\to E_\eps$ is invertible. Combining \eqref{inv_Sob} with the Sobolev embedding properties we have that 
\beq
\label{ext_A_eps}
\Vert (A^\ast_\eps)^{-1}v\Vert_{L^2(\Om_\delta)}\le\Vert (A^\ast_\eps)^{-1}v\Vert_s\le \lambda_\eps\Vert v\Vert_{L^2(\Om_\delta)}
\eeq
holds for all $v\in E_\eps$. Taking the closure $\overline{E}_\eps$ of $E_\eps$ in $L^2(\Om_\delta)$ the inequality \eqref{ext_A_eps} allows us to extend $(A^\ast_\eps)^{-1}$ to a continuous operator from $\overline{E}_\eps$ to $H^s(\R^n)\subseteq L^2(\Om_\delta)$, with \eqref{ext_A_eps} valid for all $v\in\overline{E}_\eps$. 

Let $u$ be an element of $E$ defined by the net $(u_\eps)_\eps$, $u_\eps\in E_\eps$. Clearly $(A^\ast_\eps)^{-1}u_\eps\in\Cinfc(\Om_\delta)$  with $\Vert(A^\ast_\eps)^{-1}u_\eps\Vert_s\le \lambda_\eps\Vert u_\eps\Vert_{L^2(\Om_\delta)}$ and if $u'_\eps\in E_\eps$ is another net generating $u$ we obtain
\[
\Vert(A^\ast_\eps)^{-1}(u_\eps-u'_\eps)\Vert_s\le \lambda_\eps\Vert u_\eps-u'_\eps\Vert_{L^2(\Om_\delta)}.
\]
This means that we can define the $\wt{\C}$-linear functional
\[
S:E\to\wt{\C}:u=[(u_\eps)_\eps]\to[(((A^\ast_\eps)^{-1}u_\eps|F_\eps)_{L^2(\R^n)})_\eps]
\]
where
\begin{multline*}
|((A^\ast_\eps)^{-1}u_\eps|F_\eps)_{L^2(\R^n)}|=(2\pi)^{-n}|(\lara{\xi}^{s}\widehat{(A^\ast_\eps)^{-1}u_\eps}|\lara{\xi}^{-s}\widehat{F_\eps})_{L^2(\R^n)}|\le \Vert (A^\ast_\eps)^{-1}u_\eps\Vert_{s}\Vert F_\eps\Vert_{-s}\\
\le \lambda_\eps\Vert u_\eps\Vert_{L^2(\Om_\delta)}\Vert F_\eps\Vert_{-s}.
\end{multline*}
From the previous inequality we also have that the functional $S$ is continuous. Since $E$ is a $\wt{\C}$-submodule the projection $P_E$ is continuous and $\wt{\C}$-linear. It follows that
\[
S\circ P_E:\G_{L^2(\Om_\delta)}\to \wt{\C}:u\to S(P_E(u))
\]
is a continuous $\wt{\C}$-linear functional on $\G_{L^2(\Om_\delta)}$ with basic structure. Indeed, it is defined by the net $L^2(\Om_\delta)\to \C: v\to ((A^\ast_\eps)^{-1}P_{\overline{E}_\eps}\ v|F_\eps)_{L^2(\R^n)}$ such that
\[
|((A^\ast_\eps)^{-1}P_{\overline{E}_\eps}\ v|F_\eps)_{L^2(\R^n)}|\le \lambda_\eps\Vert P_{\overline{E}_\eps}\ v\Vert_{L^2(\Om_\delta)}\Vert F_\eps\Vert_{-s}\le 2\lambda_\eps\Vert F_\eps\Vert_{-s}\Vert v\Vert_{L^2(\Om_\delta)},
\]
where the nets $(\lambda_\eps)_\eps$ and $(\Vert F_\eps\Vert_{-s})_\eps$ are moderate. By the Riesz representation theorem for Hilbert $\wt{\C}$-modules and $\wt{\C}$-linear functionals (Theorem 4.1 and Proposition 4.4 in \cite{GarVer:08}) we have that there exists a unique $t\in \G_{L^2(\Om_\delta)}$ such that
\[
(S\circ P_E)(u)=(u|t)_{L^2(\Om_\delta)}
\]
for all $u\in\G_{L^2(\Om_\delta)}$. More precisely there exists a representative $(t_\eps)_\eps$ of $t$ such that the equality
\[
((A^\ast_\eps)^{-1}P_{\overline{E}_\eps}\ v|F_\eps)_{L^2(\R^n)}=(v|t_\eps)_{L^2(\Om_\delta)}
\]
holds for all $v\in L^2(\Om_\delta)$. Let $T$ be the basic functional in $\LL(\Gc(\Om_\delta),\wt{\C})$ generated by the net $(t_\eps)_\eps$. $T$ solves the equation $a(x,D)|_{\Om_\delta}T=F|_{\Om_\delta}$. Indeed, since $A^\ast_\eps(x,D)\varphi\in E_\eps\subseteq L^2(\Om_\delta)$ for all $\varphi\in\Cinfc(\Om_\delta)$ we can write
\[
((A^\ast_\eps)^{-1}P_{\overline{E}_\eps}A^\ast_\eps(x,D)\varphi|F_\eps)_{L^2(\R^n)}=((A^\ast_\eps)^{-1}A^\ast_\eps(x,D)\varphi|F_\eps)_{L^2(\R^n)}=(\varphi|F_\eps)_{L^2(\R^n)}=(A^\ast_\eps(x,D)\varphi|t_\eps)_{L^2(\Om_\delta)}.
\]
Thus,
\[
(\varphi|F_\eps)_{L^2(\R^n)}=(\varphi|a_\eps(x,D)|_{\Om_\delta}t_\eps)_{L^2(\Om_\delta)}
\]
for all $\varphi\in\Cinfc(\Om_\delta)$ or in other words, $a_\eps(x,D)|_{\Om_\delta}t_\eps = F_\eps|_{\Om_\delta}$ in $\D'(\Om_\delta)$.
\end{proof}
Theorem \ref{theo_loc_Sob} tells us that classical operators which satisfy the condition \eqref{inv_Sob} are locally solvable in the Colombeau context, in the sense that under suitable moderateness conditions on the right hand side we will find a local generalized solution. An example is given by differential operators which are at the same time principally normal and of principal type at $0$. More precisely, Proposition 4.3 in \cite{SaintRaymond:91} proves that a differential operator $a(x,D)$ which is both principally normal and of principal type at $0$ fulfills the inequality $\Vert \varphi\Vert_{m-1}\le \Vert a^\ast(x,D)\varphi\Vert_0$ for all $\varphi\in\Cinfc(\Om_\delta)$ with $\delta$ small enough.

We now go back to the case of differential operators with generalized Colombeau coefficients. In other words we assume that the symbol $a$ is of the type $a=[(a_\eps)_\eps]=\sum_{|\alpha|\le m}c_\alpha(x)\xi^\alpha$ with $c_\alpha\in\G_\infty(\R^n)$. The next proposition shows that the local solvability condition \eqref{inv_Sob} holds under an ellipticity assumption on the real part of the symbol $a_\eps$. We recall that for all $m\in\N$ and for all $\delta>0$ the inequality
\beq
\label{ineq_SR}
\Vert\varphi\Vert_m\le 2\delta\Vert\varphi\Vert_{m+1}
\eeq
is true for all $\varphi\in\Cinfc(\Om_\delta)$ (see \cite[Lemma 4.2]{SaintRaymond:91}).
\begin{proposition}
\label{prop_SR}
Let $a(x,D)$ be a generalized differential operator with symbol $a\in\G_{S^{m}(\R^{2n})}$. If there exists $b\in\R$, a representative $(a_\eps)_\eps\in \mM_{S^{m}(\R^{2n}),b}$, a constant $c_0>0$ and a net $(c_\eps)_\eps\in \mM_{S^{m-1}(\R^{2n}),b}$ such that
\[
\Re\, a_\eps(x,\xi)= c_0\eps^b\lara{\xi}^{m}+c_\eps(x,\xi)
\]
for all $(x,\xi)$ and all $\eps\in(0,1]$, then there exist a sufficiently small $\delta>0$ and a constant $C>0$ such that
\[
\Vert \varphi\Vert_{\frac{m}{2}}\le C\eps^{-b}\Vert a_\eps^\ast(x,D)\varphi\Vert_0
\] 
for all $\varphi\in\Cinfc(\Om_\delta)$ and for all $\eps\in(0,1]$.
\end{proposition}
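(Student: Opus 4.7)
The statement is a Gårding-type coercive estimate on the formal adjoint. My plan proceeds in three steps, followed by an absorption argument in $\delta$.

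First, I would use the symbolic calculus of the adjoint of a differential operator to write
\[
a_\eps^\ast \;=\; \bar a_\eps + r_\eps, \qquad r_\eps \;=\; \sum_{|\gamma|\ge 1}\frac{(-i)^{|\gamma|}}{\gamma!}\partial^\gamma_\xi\partial^\gamma_x\bar a_\eps.
\]
The sum is finite, each nonzero term has order $\le m-1$, and its $S^{m-1}$-seminorms are controlled by the $S^m$-seminorms of $a_\eps$; hence $(r_\eps)_\eps\in\mM_{S^{m-1}(\R^{2n}),b}$. Combining with the hypothesis $\Re a_\eps = c_0\eps^b\lara{\xi}^m+c_\eps$ gives
\[
a_\eps+a_\eps^\ast \;=\; 2\Re a_\eps + r_\eps \;=\; 2c_0\eps^b\lara{\xi}^m + \rho_\eps, \qquad (\rho_\eps)_\eps:=(2c_\eps+r_\eps)_\eps\in\mM_{S^{m-1}(\R^{2n}),b}.
\]

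Second, setting $A_\eps:=a_\eps(x,D)$, pairing with $\varphi\in\Cinfc(\Om_\delta)$ in $L^2$ and using the identity $((A_\eps+A_\eps^\ast)\varphi,\varphi)_{L^2}=2\Re(\varphi,A_\eps^\ast\varphi)_{L^2}$ together with the Parseval equality $(\mathrm{op}(\lara{\xi}^m)\varphi,\varphi)_{L^2}=\|\varphi\|_{m/2}^2$, I obtain
\[
2c_0\eps^b\|\varphi\|_{m/2}^2 \;=\; 2\Re(\varphi,A_\eps^\ast\varphi)_{L^2}\; -\; (\rho_\eps(x,D)\varphi,\varphi)_{L^2}.
\]
Cauchy-Schwarz bounds the first right-hand term by $2\|\varphi\|_0\,\|A_\eps^\ast\varphi\|_0$. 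For the second, Proposition \ref{prop_Sob} applied to $\rho_\eps$ with $s=(m-1)/2$ yields $\|\rho_\eps(x,D)\varphi\|_{-(m-1)/2}\le C_1\eps^b\|\varphi\|_{(m-1)/2}$, so the $H^{(m-1)/2}$-$H^{-(m-1)/2}$ duality pairing gives $|(\rho_\eps(x,D)\varphi,\varphi)_{L^2}|\le C_1\eps^b\|\varphi\|_{(m-1)/2}^2$.

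Third, the $\delta$-smallness is produced by two Poincaré-type estimates on $\Cinfc(\Om_\delta)$,
\[
\|\varphi\|_0 \le C_2\,\delta^{m/2}\|\varphi\|_{m/2}, \qquad \|\varphi\|_{(m-1)/2}^2 \le C_3\,\delta\,\|\varphi\|_{m/2}^2,
\]
both obtained by splitting the Fourier integral at radius $R\sim\delta^{-1}$: on $|\xi|\le R$ the low-frequency bound $\|\widehat\varphi\|_\infty\le|\Om_\delta|^{1/2}\|\varphi\|_0\le C\delta^{n/2}\|\varphi\|_0$ combined with the volume of the ball gives a factor $(R\delta)^n$ absorbable on the left when $R\delta$ is small, while on $|\xi|\ge R$ the decay $\lara{\xi}^{-m}$ (resp.~$\lara{\xi}^{-1}$) yields the desired $R^{-m}\sim\delta^{m}$ (resp.~$R^{-1}\sim\delta$) gain against $\|\varphi\|_{m/2}^2$. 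Inserting into the identity above gives
\[
2c_0\eps^b\|\varphi\|_{m/2}^2 \;\le\; 2C_2\delta^{m/2}\|\varphi\|_{m/2}\|A_\eps^\ast\varphi\|_0 \;+\; C_1C_3\,\eps^b\,\delta\,\|\varphi\|_{m/2}^2;
\]
choosing $\delta$ so small, once and for all, that $C_1C_3\,\delta\le c_0$ absorbs the last term on the left, and dividing by $\|\varphi\|_{m/2}$ yields the claim with $C=2C_2\delta^{m/2}/c_0$.

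The main obstacle is the uniform-in-$\eps$ absorption of $\rho_\eps$: its moderateness is the \emph{same} $\eps^b$ as the elliptic principal contribution $c_0\eps^b\lara{\xi}^m$, so no $\eps$-scaling can be exploited to dominate it. Only the purely $\delta$-dependent gain $\|\varphi\|_{(m-1)/2}^2\le C_3\delta\|\varphi\|_{m/2}^2$ on $\Cinfc(\Om_\delta)$, with $\delta$ chosen independently of $\eps$, makes the absorption possible. This is precisely why the hypothesis requires $c_\eps$ and $a_\eps$ to share the same moderateness index $b$; weakening this assumption would break the scheme.
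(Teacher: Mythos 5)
Your proposal is correct and follows essentially the same route as the paper: symmetrize via $2\Re(\varphi\,|\,a_\eps^\ast(x,D)\varphi)=(\varphi\,|\,(a_\eps+a_\eps^\ast)(x,D)\varphi)$, split the symbol into the elliptic part $2c_0\eps^b\lara{\xi}^m$ plus an order-$(m-1)$ remainder with the same moderateness $\eps^b$, control the remainder by Proposition \ref{prop_Sob} together with a Poincar\'e-type small-support gain in $\delta$ (the paper's \eqref{ineq_SR}), absorb for $\delta$ small independently of $\eps$, and conclude by Cauchy--Schwarz. The only cosmetic differences are the final pairing ($\Vert\varphi\Vert_0\Vert a_\eps^\ast(x,D)\varphi\Vert_0$ with an extra Poincar\'e factor, versus the paper's $\Vert\varphi\Vert_{m/2}\Vert a_\eps^\ast(x,D)\varphi\Vert_{-m/2}\le\Vert\varphi\Vert_{m/2}\Vert a_\eps^\ast(x,D)\varphi\Vert_0$) and your explicit Fourier-splitting derivation of the fractional-order Poincar\'e inequalities.
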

\begin{proof}
We begin by writing $2\Re\, (\varphi|a_\eps^\ast(x,D)\varphi)$ as $(\varphi|(a_\eps+a_\eps^\ast)(x,D)\varphi)$. Recalling that $a_\eps^\ast=\overline{a_\eps}$ modulo $\mM_{S^{m-1}(\R^n),b}$ we have
\[
2\Re\, (\varphi|a_\eps^\ast(x,D)\varphi)=(\varphi|2c_0\eps^b\lambda^m(D)\varphi)+(\varphi|c_{1,\eps}(x,D)\varphi),
\]
where $\lambda^m(D)$ has symbol $\lara{\xi}^m$ and $(c_{1,\eps})_\eps\in\mM_{S^{m-1}(\R^{2n}),b}$. Hence,
\[
2\Re\, (\varphi|a_\eps^\ast(x,D)\varphi)\ge c_1\eps^b\Vert\varphi\Vert^2_{\frac{m}{2}}-(\varphi|c_{1,\eps}(x,D)\varphi)\ge c_1\eps^b\Vert\varphi\Vert^2_{\frac{m}{2}}-\Vert\varphi\Vert_{\frac{m}{2}-1}\Vert c_{1,\eps}(x,D)\varphi\Vert_{-\frac{m}{2}}
\]
Combining Proposition \ref{prop_Sob} with \eqref{ineq_SR} we obtain
\[
2\Re\, (\varphi|a_\eps^\ast(x,D)\varphi)\ge c_1\eps^b\Vert\varphi\Vert^2_{\frac{m}{2}}-\Vert\varphi\Vert_{\frac{m}{2}-1}c_2\eps^b\Vert\varphi\Vert^2_{-\frac{m}{2}}\ge c_1\eps^b\Vert\varphi\Vert^2_{\frac{m}{2}}-2\delta c_2\eps^b\Vert\varphi\Vert^2_{\frac{m}{2}}.
\]
Concluding for $\delta$ small enough there exists a constant $C>0$ such that
\[
\Vert\varphi\Vert^2_{\frac{m}{2}}\le C\eps^{-b}\Vert\varphi\Vert_{\frac{m}{2}}\Vert a_\eps^\ast(x,D)\varphi\Vert_{-\frac{m}{2}}\le C\eps^{-b}\Vert\varphi\Vert_{\frac{m}{2}}\Vert a_\eps^\ast(x,D)\varphi\Vert_{0}
\]
holds for all $\varphi\in\Cinfc(\Om_\delta)$.
\end{proof}
\begin{example}
Note that the condition \eqref{inv_Sob} can be fulfilled by differential operators which are not a bounded perturbation of a differential operator with constant Colombeau coefficients. This means that the results of this section enlarge the family of generalized differential operators whose local solvability we are able to investigate in the Colombeau context. As an explanatory example in $\R^2$ consider 
\[
a_\eps(x,D)=D_1+b_\eps(x)D_2,
\]
where $(b_\eps)_\eps$ is the representative of a generalized function. The generalized differential operator $a(x,D)$ generated by $(a_\eps)_\eps$ is not a bounded perturbation of the operator at $0$ if we take $b_\eps(0)=0$. However if $(b_\eps)_\eps$ is real valued and suitable moderateness conditions are satisfied (for instance $(b_\eps)_\eps$ bounded in $\eps$ together with all its derivatives), the arguments of Proposition 4.3 in \cite{SaintRaymond:91} lead us to an estimate from below of the type considered by Theorem \ref{theo_loc_Sob}. 
\end{example}

\newcommand{\SortNoop}[1]{}

\end{document}